\theoremstyle{definition}
\newtheorem{definition}{Definition}[section]
\theoremstyle{plain}
\newtheorem{theorem}[definition]{Theorem}
\newtheorem{proposition}[definition]{Proposition}
\newtheorem{lemma}[definition]{Lemma}
\newtheorem{corollary}[definition]{Corollary}
\newtheorem{example}[definition]{Example}
\theoremstyle{remark}
\newtheorem{remark}[definition]{Remark}
\numberwithin{equation}{section}
\begin{document}

\title{Second Homology of  Generalized Periplectic Lie Superalgebras}
\author{Zhihua Chang ${}^1$, Jin Cheng ${}^2$ and Yongjie Wang ${}^3$\footnote{Corresponding Author: Yongjie Wang, Email: yjwang@ncts.ntu.edu.tw}}
\maketitle

\begin{center}
\footnotesize
\begin{itemize}
\item[1] School of Mathematics, South China University of Technology, Guangzhou, Guangdong, 510640, China.
\item[2] School of Mathematics and Statistics, Shandong Normal University, Jinan, Shandong, 250014, China.
\item[3] Mathematics Division, National Center for Theoretical Sciences, Taipei, 10617, Taiwan.
\end{itemize}
\end{center}

\begin{abstract}
Let $(R,{}^-)$ be an arbitrary unital associative superalgebra with superinvolution over a commutative ring $\Bbbk$ with $2$ invertible. The second homology of the generalized periplectic Lie superalgebra $\mathfrak{p}_m(R,{}^-)$ for $m\geqslant3$ has been completely determined via an explicit construction of its universal central extension. In particular, this second homology is identified with the first $\mathbb{Z}/2\mathbb{Z}$-graded dihedral homology of $R$ with certain superinvolution whenever $m\geqslant5$.
\bigskip

\noindent\textit{MSC(2010):} 17B05, 19D55.
\bigskip

\noindent\textit{Keywords:} Second homology; Periplectic Lie superalgebra; Universal central extension; $\mathbb{Z}/2\mathbb{Z}$-graded dihedral homology.
\end{abstract}

\section{Introduction}
\label{sec:intr}
It is well known that the second homology of a Lie (super)algebra $\mathfrak{g}$ is identified with the kernel of its universal central extension, and thus classifies all central extensions of $\mathfrak{g}$ up to isomorphism (c.f. \cite{Neher2003,ScheunertZhang1998}). They play crucial roles in the theory of Lie (super)algebras.

A remarkable work about the second homology of a Lie algebra is the nice connection between the second homology of a matrix Lie algebra and the first cyclic homology of its coordinates associative algebra established in \cite{KasselLoday1982}. Let $A$ be a unital associative algebra over a commutative ring $\Bbbk$ with $2$ invertible. One denotes $\mathfrak{gl}_n(A)$ the Lie algebra of all $n\times n$-matrices with entries in $A$ under commutator operation and $\mathfrak{sl}_n(A)$ the derived Lie subalgebra of $\mathfrak{gl}_n(A)$. It is shown in \cite{KasselLoday1982} that the second homology $\mathrm{H}_2(\mathfrak{sl}_n(A))$ with $n\geqslant2$ is isomorphic to the first cyclic homology $\mathrm{HC}_1(A)$. Such an isomorphism has been extended to many other classes of Lie (super)algebras. For instance, Y. Gao showed in \cite{Gao1996} that the second homology of elementary unitary Lie algebra $\mathfrak{eu}_n(R,{}^-)$ with $n\geqslant5$ is identified with the first skew-dihedral homology of $(R,{}^-)$ that is a unital associative algebra with anti-involution. 

In the particular case where the coordinates algebra is associative and commutative, the matrix Lie algebras under consideration are isomorphic to current Lie algebras of the form $\mathfrak{g}\otimes A$, where $\mathfrak{g}$ is a finite dimensional (simple) Lie algebra and $A$ is an associative commutative algebra. For instance, $\mathfrak{sl}_n(A)$ is isomorphic to $\mathfrak{sl}_n(\Bbbk)\otimes_{\Bbbk}A$ if $A$ is commutative, while $\mathfrak{eu}_n(R,\mathrm{id})$ is isomorphic to $\mathfrak{eu}_n(\Bbbk,\mathrm{id})\otimes_{\Bbbk}R$ if $R$ is commutative. Low-degree (co)homology of current Lie algebras were also studied intensively. In \cite{NW2008}, K.-H. Neeb and F. Wagemann explicitly described the second cohomology of current algebras of general Lie algebras with coefficients in a trivial module. More details could be found in \cite{NW2008} and the refenrences therein.

The super analogue of C. Kassel and J. L. Loday's work was obtained in \cite{ChenGuay2013,ChenSun2015}.  The isomorphism between the second homology of the Lie superalgebra $\mathfrak{sl}_{m|n}(S)$ coordinated by a unital associative superalgebra $S$ with $m+n\geqslant5$ and the first $\mathbb{Z}/2\mathbb{Z}$-graded cyclic homology $\mathrm{HC}_1(S)$ was established. Recent investigation \cite{ChangWang2016} further gave the identification between the second homology of the ortho-symplectic Lie superalgebra $\mathfrak{osp}_{m|2n}(R,{}^-)$ and the first $\mathbb{Z}/2\mathbb{Z}$-graded skew-dihedral homology of $(R,{}^-)$ for $(m,n)\neq(1,1)$ or $(2,1)$, where $(R,{}^-)$ is a unital associative superalgebra with superinvolution (see (\ref{eq:supinv}) for the definition). A series of deep investigations on the relationship between the homology theory of Lie algebras and the homology theory of associative algebras have been made in \cite{LodayProcesi1988, LodayQuillen1984}.

Inspired by the above developments, we aim to establish an isomorphism that is analogous to C. Kassel and J. L. Loday's isomorphism for the generalized periplectic Lie superalgebra $\mathfrak{p}_m(R,{}^-)$ coordinatized by a unital associative superalgebra $(R,{}^-)$ with superinvolution. As in Section~\ref{sec:palg}, a generalized periplectic Lie superalgebra is defined as the derived sub-superalgebra of the Lie superalgebra of all skew-symmetric matrices with respect to the so-called periplectic superinvolution. It is a super analogue of a unitary Lie algebra introduced in \cite{AllisonFaulkner1993}. This family of Lie superalgebras provides us with a realization of an arbitrary generalized root graded Lie superalgebra of type $P(m-1)$ for $m\neq4$ up to central isogeny (cf. \cite{ChengGao2015}), which is a complement to the realization of a root graded Lie superalgebra of type $P(m-1)$ given in \cite{MartinezZelmanov2003}.

A primary result of this paper is Theorem~\ref{thm:hml_pm} which states that the second homology of the Lie superalgebra $\mathfrak{p}_m(R,{}^-)$ with $m\geqslant5$ is isomorphic to the first $\mathbb{Z}/2\mathbb{Z}$-graded dihedral homology of $(R,{}^-\circ\rho)$, where ${}^-\circ\rho$ is the superinvolution on $R$ obtained by twisting the superinvolution ${}^-$ with the sign map $\rho$ (see (\ref{eq:rho}) in Section~\ref{sec:palg}). In the special case where $R$ is super-commutative, the isomorphism indicates that the second homology of $\mathfrak{p}_m(\Bbbk)\otimes_{\Bbbk}R$ for a super-commutative superalgebra $R$ is trivial, which was obtained by K. Iohara and Y. Koga in \cite{IoharaKoga2001, IoharaKoga2005}. While the isomorphism also reveals that the second homology  of $\mathfrak{p}_m(R,{}^-)$ is not necessarily trivial if $R$ is not super-commutative. 

The methods used in this paper unsurprisingly involve an explicit construction of the universal central extension of $\mathfrak{p}_m(R,{}^-)$, which will be achieved via introducing the notion of the Steinberg periplectic Lie superalgebra $\mathfrak{stp}_m(R,{}^-)$ in Section~\ref{sec:stp}.

The isomorphism between the second homology of $\mathfrak{p}_m(R,{}^-)$ and the first $\mathbb{Z}/2\mathbb{Z}$-graded dihedral homology of $(R,{}^-\circ\rho)$ fails when $m=3$ or $4$. Nonetheless, the second homology of $\mathfrak{p}_4(R,{}^-)$ and $\mathfrak{p}_3(R,{}^-)$ will also be explicitly characterized in Theorems~\ref{thm:p4_hml} and~\ref{thm:p3_hml}.

Throughout this paper, we always assume that $\Bbbk$ is a commutative base ring with $2$ invertible, and suppose that all modules, associative superalgebras and Lie superalgebras are defined over $\Bbbk$. We also use the following degree convention: if $v$ is an element of a $\mathbb{Z}/2\mathbb{Z}$-graded vector space and its degree $|v|$ appears in some formula or expression, then $v$ is assumed to be homogeneous.

 \section{Generalized Periplectic Lie Superalgebras}
 \label{sec:palg}
 
We briefly review the definition of a generalized periplectic Lie superalgebra and prove a few of its properties in this section.

Let $R$ be a unital associative superalgebra. Then the associative superalgebra $\mathrm{M}_{m|m}(R)$ of all $2m\times 2m$-matrices is also equipped with a $\mathbb{Z}/2\mathbb{Z}$-grading by setting 
\begin{equation*}
|e_{ij}(a)|:=|i|+|j|+|a|,\quad a\in R,\quad 1\leqslant i,j\leqslant 2m,
\label{eq:parity_matrix}
\end{equation*}
where $e_{ij}(a)$ is the matrix unit with $a$ at the $(i,j)$-position and $0$ elsewhere, and
\begin{equation*}
|i|=\begin{cases}0,&\text{if }i\leqslant m,\\1,&\text{if }i>m.\end{cases}
\label{eq:parity_index}
\end{equation*}
This makes $\mathrm{M}_{m|m}(R)$ an associative superalgebra. 

We assume in addition that $R$ is equipped with a superinvolution\footnote{The superinvolutions on the matrix superalgebra $\mathrm{M}_{m|n}(\Bbbk)$ over a field $\Bbbk$ of characteristic not $2$ are classified in \cite{Racine1998}. A superinvolution on $\mathrm{M}_{m|n}(\Bbbk)$ may not exist. Whenever it exists, a superinvolution on $\mathrm{M}_{m|n}(\Bbbk)$ is equivalent to either a periplectic superinvolution or an ortho-symplectic superinvolution. This motivates us to define the periplectic superinvolution on $\mathrm{M}_{m|m}(R)$ here.} ${}^-:R\rightarrow R$ that is a $\Bbbk$-linear map satisfying
\begin{equation}
\overline{ab}=(-1)^{|a||b|}\bar{b}\bar{a},\text{ and }\bar{\bar{a}}=a,
\label{eq:supinv}
\end{equation}
for $a,b\in R$.  This further gives rise to a periplectic superinvolution on the associative superalgebra $\mathrm{M}_{m|m}(R)$ defined by
\begin{equation*}
\begin{pmatrix}A&B\\C&D\end{pmatrix}^{\mathrm{prp}}:=
\begin{pmatrix}\overline{D}^t&-\overline{\rho(B)}^t\\\overline{\rho(C)}^t&\overline{A}^t\end{pmatrix},
\end{equation*}
where $A,B,C,D$ are $m\times m$-matrices with entries in $R$,  $\rho:R\rightarrow R$ is the $\Bbbk$-linear map defined by
\begin{equation}
\rho(a)=(-1)^{|a|}a,\label{eq:rho}
\end{equation}
for $a\in R$, $\rho(A)$ denotes the matrix $(\rho(a_{ij}))$ and $\overline{A}=(\overline{a_{ij}})$ for $A=(a_{ij})$. In this situation, one defines a Lie superalgebra 
\begin{equation*}
\widetilde{\mathfrak{p}}_m(R,{}^-):=\{X\in\mathrm{M}_{m|m}(R)|X^{\mathrm{prp}}=-X\},
\end{equation*}
with the standard super-commutator as the super-bracket, whose derived Lie sub-superalgebra
\begin{equation*}
\mathfrak{p}_m(R,{}^-):=[\widetilde{\mathfrak{p}}_m(R,{}^-),\widetilde{\mathfrak{p}}_m(R,{}^-)]
\end{equation*}
is called \textit{the generalized periplectic Lie superalgebra coordinatized by the associative superalgebra $(R,{}^-)$ with superinvolution}.
\medskip

As an example, we consider $R=\Bbbk$ on which the identity map is a superinvolution. The Lie superalgebra $\mathfrak{p}_m(\Bbbk,\mathrm{id})$ coincides with the simple Lie superalgebra of type $P(m-1)$ as defined in \cite{Kac1977}. We simply write $\mathfrak{p}_m(\Bbbk):=\mathfrak{p}_m(\Bbbk,\mathrm{id})$.

If $R$ is super-commutative, there is a Lie superalgebra structure on $\mathfrak{p}_m(\Bbbk)\otimes_{\Bbbk}R$ given by
\begin{equation*}
[x\otimes a,y\otimes b]=(-1)^{|a||y|}[x,y]\otimes ab,
\end{equation*}
for $x,y\in\mathfrak{p}_m(\Bbbk)$ and $a,b\in R$. This Lie superalgebra is isomorphic to the generalized periplectic Lie superalgebra $\mathfrak{p}_m(R,\rho)$, where the $\Bbbk$-linear map $\rho:R\rightarrow R$ (\ref{eq:rho}) is a superinvolution on a super-commutative superalgebra $R$.
\medskip

Before going into the discussion on the properties of generalized periplectic Lie superalgebras, we exhibit another example here.
\begin{example}
\label{ex:iso_p_SS}
Let $S$ be an arbitrary unital associative superalgebra and $S^{\mathrm{op}}$ be its \textit{opposite superalgebra} with the multiplication
\begin{equation*}
a\overset{\mathrm{op}}{\cdot}b=(-1)^{|a||b|}b\cdot a,
\end{equation*}
for $a,b\in S$. Then the $\Bbbk$-linear map
\begin{equation*}
\mathrm{ex}:S\oplus S^{\mathrm{op}}\rightarrow S\oplus S^{\mathrm{op}}, \quad a\oplus b\mapsto b\oplus a.
\end{equation*}
is a superinvolution on $S\oplus S^{\mathrm{op}}$. In this situation, we have an isomorphism of Lie superalgebras
\begin{equation*}
\mathfrak{p}_m(S\oplus S^{\mathrm{op}},\mathrm{ex})\cong\mathfrak{sl}_{m|m}(S)
:=[\mathfrak{gl}_{m|m}(S),\mathfrak{gl}_{m|m}(S)],\quad m\geqslant1,
\end{equation*}
where $\mathfrak{gl}_{m|m}(S)$ is the Lie superalgebra of $2m\times 2m$-matrices with entries in $S$.
\end{example}

\begin{proof}
It is straightforward to check that there is an isomorphism from the Lie superalgebra $\mathfrak{gl}_{m|m}(S)$ to the Lie superalgebra $\tilde{\mathfrak{p}}_m(S\oplus S^{\mathrm{op}},\mathrm{ex})$ given by:
\begin{align*}
e_{i,j}(a)&\mapsto e_{i,j}(a\oplus 0)-e_{m+j,m+i}(0\oplus a),\\
e_{i,m+j}(a)&\mapsto e_{i,m+j}(a\oplus 0)+e_{j,m+i}(0\oplus \rho(a)),\\
e_{m+i,j}(a)&\mapsto e_{m+i,j}(a\oplus 0)-e_{m+j,i}(0\oplus \rho(a)),\\
e_{m+i,m+j}(a)&\mapsto -e_{j,i}(0\oplus a)+e_{m+i,m+j}(a\oplus0),
\end{align*}
for $a\in S$ and $1\leqslant i,j\leqslant m$. Taking their derived Lie sub-superalgebras, we obtain the desired isomorphism from the Lie superalgebra $\mathfrak{sl}_{m|m}(S)$ to the Lie superalgebra $\mathfrak{p}_m(S\oplus S^{\mathrm{op}},\mathrm{ex})$.
\end{proof}
\bigskip

In order to discuss the universal central extension of the generalized periplectic Lie superalgebra $\mathfrak{p}_m(R,{}^-)$, we explore the perfectness of $\mathfrak{p}_m(R,{}^-)$. We will use the following notations:
\begin{align*}
t_{ij}(a):&=e_{ij}(a)-e_{m+j,m+i}(\bar{a}),\\
f_{ij}(a):&=e_{i,m+j}(a)+e_{j,m+i}(\rho(\bar{a})),\\
g_{ij}(a):&=e_{m+i,j}(a)-e_{m+j,i}(\rho(\bar{a})).
\end{align*}
We always denote $R_{(\pm)}:=\{a\in R|\bar{a}=\pm \rho(a)\}$.

\begin{lemma}
\label{lem:p_elm}
For $m\geqslant2$, every element $x\in\tilde{\mathfrak{p}}_m(R,{}^-)$ is written as
\begin{equation}
\begin{aligned}
x=&{t}_{11}(a)+\sum\limits_{i=2}^m({t}_{ii}(a_i)-{t}_{11}(a_i))+\sum\limits_{1\leqslant i\neq j\leqslant m}{t}_{ij}(a_{ij})\\
&+\sum\limits_{i=1}^m(e_{i,m+i}(b_i)+e_{m+i,i}(c_i))+\sum\limits_{1\leqslant i<j\leqslant m}({f}_{ij}(b_{ij})+{g}_{ij}(c_{ij})),
\end{aligned}\label{eq:p_ele}
\end{equation}
where $a, a_i, a_{ij}, b_{ij}, c_{ij}\in R$, $b_i\in R_{(+)}$ and $c_i\in R_{(-)}$ are uniquely determined by $x$. Moreover, such an element $x$ is contained in 
$\mathfrak{p}_m(R,{}^-)
=[\widetilde{\mathfrak{p}}_m(R,{}^-),\widetilde{\mathfrak{p}}_m(R,{}^-)]
$ 
if and only if $a\in [R,R]+R_{(-)}$.
\end{lemma}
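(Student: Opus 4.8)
The plan is to first establish the decomposition (\ref{eq:p_ele}) by a direct matrix computation, and then to characterize the derived subalgebra by a dimension-count type argument on the "diagonal" part. I would start by writing a general element $X=\begin{pmatrix}A&B\\C&D\end{pmatrix}\in\mathrm{M}_{m|m}(R)$ and imposing $X^{\mathrm{prp}}=-X$ from (\ref{eq:inv_pR}): this forces $D=-\overline{A}^t$, $B=\overline{\rho(B)}^t$ and $C=-\overline{\rho(C)}^t$. The condition on $A$ is free, so $A=\sum_{i,j}e_{ij}(a_{ij})$ with the $a_{ij}$ arbitrary; the block relation $D=-\overline{A}^t$ then says precisely that the $(A,D)$-part of $X$ is $\sum_{i,j}t_{ij}(a_{ij})$ (using (\ref{eq:p_gtor_t})), which I would split off the piece $\sum_i t_{ii}(a_{ii})$ and rewrite as $t_{11}(a)+\sum_{i\ge2}(t_{ii}(a_i)-t_{11}(a_i))$ with $a:=\sum_i a_{ii}$, $a_i:=a_{ii}$. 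For the $B$-block, the symmetry $B=\overline{\rho(B)}^t$ means the off-diagonal entries pair up as $f_{ij}(b_{ij})$ for $i<j$ (see (\ref{eq:p_gtor_f})), while the diagonal entries $b_i$ satisfy $b_i=\overline{\rho(b_i)}$, i.e. $b_i\in R_{(+)}$; similarly the $C$-block gives $g_{ij}(c_{ij})$ for $i<j$ and diagonal entries $c_i\in R_{(-)}$. Uniqueness is immediate since the listed matrices have pairwise disjoint supports among the independent entries. This part is routine but slightly tedious bookkeeping; no real obstacle.

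For the second statement I would compute enough brackets of the generators $t_{ij},f_{ij},g_{ij}$ to see which elements lie in the derived subalgebra. The key observations are: (i) for $i\neq j$, $t_{ij}(a)=[t_{ik}(a),t_{kj}(1)]$ for a third index $k$ (here $m\ge2$ is not quite enough for this particular identity, so I would instead use $t_{ij}(a)=[t_{ij}(a),t_{jj}(1)-t_{ii}(1)]$ up to a correction, or directly $[t_{ij}(a),t_{ji}(1)]=t_{ii}(a)-t_{jj}(\bar a\cdot\text{stuff})$ and juggle), and similarly $f_{ij},g_{ij}$ and the diagonal $e_{i,m+i}(b_i),e_{m+i,i}(c_i)$ are all visibly commutators; (ii) the only obstruction therefore lives in the "trace part" $t_{11}(a)$, and one computes $[t_{ij}(x),t_{ji}(y)]=t_{ii}(xy)-t_{jj}(\bar x\bar y)+(\text{off-diagonal})$ whose diagonal contribution to the $t_{11}$-coordinate is an element of $[R,R]+R_{(-)}$ (the $R_{(-)}$ coming from brackets of the $b_i,c_i$ type generators and from $\overline{xy}$ versus $yx$), and conversely every element of $[R,R]+R_{(-)}$ arises this way. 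So the claim reduces to: the image of the bracket map, projected to the $a$-coordinate in (\ref{eq:p_ele}), is exactly $[R,R]+R_{(-)}$.

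The main obstacle I anticipate is the "conversely" direction together with getting the precise submodule $[R,R]+R_{(-)}$ rather than something larger or smaller: one must check that brackets never produce an $a$-coordinate outside $[R,R]+R_{(-)}$ (an upper bound, requiring one to track the trace of a product of two constrained matrices and use $\overline{ab}=(-1)^{|a||b|}\bar b\bar a$), and simultaneously that both $[R,R]$ and all of $R_{(-)}$ are hit (a lower bound). The $R_{(-)}$ part is the genuinely "periplectic" phenomenon and is the subtle point — it should come from brackets such as $[f_{1j}(b),g_{1j}(c)]$ or $[e_{1,m+1}(b_1),e_{m+1,1}(c_1)]$, which feed a term of the form $b_1c_1\pm\overline{b_1c_1}$-type expression, lying in $R_{(-)}$ by the defining conditions on $b_1\in R_{(+)}$, $c_1\in R_{(-)}$; pinning down the exact sign and verifying it generates all of $R_{(-)}$ (using that $1\in R_{(+)}$, so $c\in R_{(-)}$ is realized as a diagonal entry) is where I would spend the most care. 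Once these inclusions are matched, the equivalence "$x\in\mathfrak{p}_m(R,{}^-)\iff a\in[R,R]+R_{(-)}$" follows by combining with part one.
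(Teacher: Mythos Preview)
Your plan is correct and essentially matches the paper's proof: both reduce the question to showing $t_{11}(a)\in\mathfrak{p}_m(R,{}^-)$ if and only if $a\in[R,R]+R_{(-)}$, using a trace argument on a general commutator for the upper bound and explicit brackets for the lower bound. The paper's specific choices for the latter are $t_{11}([a',a''])=[t_{11}(a'),t_{11}(a'')]$ for the $[R,R]$ part and, for $a\in R_{(-)}$, $[f_{12}(a),g_{21}(1)]=t_{11}(a)+t_{22}(a)$ combined with $t_{22}(a)-t_{11}(a)\in\mathfrak{p}_m(R,{}^-)$ and $\tfrac12\in\Bbbk$, which sidestep the sign-chasing you anticipate.
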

\begin{proof}
The first statement follows from the definition of $\tilde{\mathfrak{p}}_m(R,{}^-)$. We show that $x\in\mathfrak{p}_m(R,{}^-)$ if and only if $a\in [R,R]+R_{(-)}$.

We observe that each term on the right hand side of (\ref{eq:p_ele}) except $t_{11}(a)$ is a super-commutator of two elements in $\widetilde{\mathfrak{p}}_m(R,{}^-)$, i.e., they are contained in $\mathfrak{p}_m(R,{}^-)=[\widetilde{p}_m(R,{}^-),\widetilde{p}_m(R,{}^-)]$.
Hence, it suffices to show that ${t}_{11}(a)\in\mathfrak{p}_m(R,{}^-)$ if and only if $a\in [R,R]+R_{(-)}$.

If $a\in[R,R]$, we write $a=\sum [a'_i,a''_i]$ with $a'_i, a''_i\in R$, then
$${t}_{11}(a)=\sum[{t}_{11}(a'_i),{t}_{11}(a''_i)]\in \mathfrak{p}_m(R,{}^-).$$
While an element $a\in R_{(-)}$ satisfies that $${t}_{11}(a)+{t}_{22}(a)={t}_{11}(a)-{t}_{22}(\rho(\bar{a}))=[{f}_{12}(a),{g}_{21}(1)]\in\mathfrak{p}_m(R,{}^-).$$
Combining with ${t}_{22}(a)-{t}_{11}(a)\in\mathfrak{p}_m(R,{}^-)$ and $\frac{1}{2}\in\Bbbk$, we conclude that ${t}_{11}(a)\in\mathfrak{p}_m(R,{}^-)$. This shows that ${t}_{11}(a)\in\mathfrak{p}_m(R,{}^-)$ if $a\in [R,R]+R_{(-)}$.

For the inverse implication, we denote 
\begin{equation}
\tau(x):=\mathrm{Tr}(A)\in R,\text{ if }x=\begin{pmatrix}A&B\\C&-\overline{A}^t\end{pmatrix}
\in\tilde{\mathfrak{p}}_m(R,{}^-).
\label{eq:tau}
\end{equation}
A straightforward computation shows that $\tau([x,y])\in[R,R]+R_{(-)}$ for all $x,y\in\tilde{\mathfrak{p}}_m(R,{}^-)$.  Hence, $a=\tau(t_{11}(a))\in [R,R]+R_{(-)}$ if ${t}_{11}(a)\in\mathfrak{p}_m(R,{}^-)$.
\end{proof}

\begin{proposition}
\label{prop:p_pft}
Let $m\geqslant2$ and $(R,{}^-)$ be a unital associative superalgebra with superinvolution.
\begin{enumerate}
\item There is an exact sequence of Lie superalgebras
$$0\rightarrow\mathfrak{p}_m(R,{}^-)\rightarrow\tilde{\mathfrak{p}}_m(R,{}^-)\rightarrow \frac{R}{[R,R]+R_{(-)}}\rightarrow0.$$
\item The Lie superalgebra $\mathfrak{p}_m(R,{}^-)$ is generated by ${t}_{ij}(a)$, ${f}_{ij}(a)$, and ${g}_{ij}(a)$ for $a\in R$ and $1\leqslant i\neq j\leqslant m$.
\item If $m\geqslant3$, then the Lie superalgebra $\mathfrak{p}_m(R,{}^-)$ is perfect, i.e.,
$$\mathfrak{p}_m(R,{}^-)=[\mathfrak{p}_m(R,{}^-),\mathfrak{p}_m(R,{}^-)].$$
\end{enumerate}
\end{proposition}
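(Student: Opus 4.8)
The plan is to deduce all three parts from Lemma~\ref{lem:p_elm} together with a short list of super matrix-unit commutator identities, the basic ingredient being
$[e_{pq}(x),e_{rs}(y)]=\delta_{qr}e_{ps}(xy)-(-1)^{|e_{pq}(x)||e_{rs}(y)|}\delta_{ps}e_{rq}(yx)$.

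For~(i): Lemma~\ref{lem:p_elm} presents $\widetilde{\mathfrak p}_m(R,{}^-)$ as a direct sum of $\Bbbk$-submodules in which the coefficient $a$ of the leading term $t_{11}(a)$ is exactly the trace $\mathrm{Tr}(A)$ of the upper-left block $A$ of $x$; hence $x\mapsto\mathrm{Tr}(A)+([R,R]+R_{(-)})$ is a well-defined $\Bbbk$-linear map $\widetilde{\mathfrak p}_m(R,{}^-)\to R/([R,R]+R_{(-)})$. It is surjective since $t_{11}(a)$ maps to $a+([R,R]+R_{(-)})$; it annihilates every super-bracket, because a bracket lies in $\mathfrak p_m(R,{}^-)$ and the trace of such an element lies in $[R,R]+R_{(-)}$ by Lemma~\ref{lem:p_elm}, so it is a homomorphism onto the abelian target; and its kernel is precisely $\mathfrak p_m(R,{}^-)$, again by Lemma~\ref{lem:p_elm}. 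This yields the exact sequence.

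For~(ii): let $\mathfrak L$ denote the Lie sub-superalgebra of $\mathfrak p_m(R,{}^-)$ generated by the off-diagonal $t_{ij}(a),f_{ij}(a),g_{ij}(a)$ with $i\neq j$. Using the explicit spanning set of $\widetilde{\mathfrak p}_m(R,{}^-)$ from Lemma~\ref{lem:p_elm}, it is enough to put each ingredient of a general element of $\mathfrak p_m(R,{}^-)$ into $\mathfrak L$. The identities I would verify are $[t_{ij}(a),t_{ji}(1)]=t_{ii}(a)-t_{jj}(a)$; $[t_{ij}(1),f_{ji}(b)]=2e_{i,m+i}(b)$ for $b\in R_{(+)}$ and $[g_{ij}(c),t_{ji}(1)]=2e_{m+i,i}(c)$ for $c\in R_{(-)}$ (here the invertibility of $2$ in $\Bbbk$ is used); $[f_{12}(a),g_{21}(1)]=t_{11}(a)+t_{22}(a)$ for $a\in R_{(-)}$, which with the first identity forces $t_{11}(R_{(-)})\subseteq\mathfrak L$; and $[t_{12}(x),t_{21}(y)]=t_{11}(xy)-(-1)^{|x||y|}t_{22}(yx)$, which with the first identity forces $t_{11}([R,R])\subseteq\mathfrak L$. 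Taken together these cover every term appearing in the decomposition of Lemma~\ref{lem:p_elm}, so $\mathfrak p_m(R,{}^-)=\mathfrak L$.

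For~(iii): assume $m\geqslant3$ and, given $i\neq j$, pick a third index $k\notin\{i,j\}$; this is the only point where $m\geqslant3$ is needed. Then I would check $t_{ij}(a)=[t_{ik}(a),t_{kj}(1)]$, $f_{ij}(a)=[t_{ik}(a),f_{kj}(1)]$ and $g_{ij}(a)=[g_{ik}(1),t_{kj}(a)]$, and observe that in each identity both factors are themselves off-diagonal generators, hence lie in $\mathfrak p_m(R,{}^-)$ by~(ii). Thus every generator of $\mathfrak p_m(R,{}^-)$ lies in $[\mathfrak p_m(R,{}^-),\mathfrak p_m(R,{}^-)]$, so the sub-superalgebra generated by those generators, which is all of $\mathfrak p_m(R,{}^-)$, is contained in its own derived subalgebra; equality follows. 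The substantive—but entirely routine—obstacle throughout is the sign bookkeeping in the super matrix-unit commutators together with the interplay of ${}^-$ and $\rho$ (for instance $\overline{\rho(a)}=\rho(\bar a)$, $\rho(\bar a)=(-1)^{|a|}\bar a$, and $b\in R_{(+)}\Leftrightarrow\rho(\bar b)=b$, $c\in R_{(-)}\Leftrightarrow\rho(\bar c)=-c$): because all indices $\leqslant m$ are even, most of the grading signs collapse, but they must be tracked carefully to produce the coefficients recorded above.
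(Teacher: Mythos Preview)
Your proposal is correct and follows essentially the same approach as the paper's proof: all three parts are derived from Lemma~\ref{lem:p_elm} via the trace map for~(i), a short list of explicit commutator identities among the $t_{ij},f_{ij},g_{ij}$ to recover the diagonal pieces for~(ii), and the ``third index'' trick $t_{ij}(a)=[t_{ik}(a),t_{kj}(1)]$ etc.\ for~(iii). The only cosmetic differences are in the placement of $a$ versus $1$ in a few of the commutators (e.g.\ the paper writes $f_{ij}(a)=[t_{ik}(1),f_{kj}(a)]$ rather than your $[t_{ik}(a),f_{kj}(1)]$), which are of course equivalent.
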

\begin{proof}
(i) Recall that $\tau:\tilde{\mathfrak{p}}_m(R,{}^-)\rightarrow R$ defined in  (\ref{eq:tau}) is a $\Bbbk$--linear map. Composing the canonical map $R\rightarrow \frac{R}{[R,R]+R_{(-)}}$, we obtain a $\Bbbk$--linear map
$$\tilde{\mathfrak{p}}_m(R,{}^-)\xrightarrow{\tau}R\rightarrow\frac{R}{[R,R]+R_{(-)}},$$
whose kernel is $\mathfrak{p}_m(R,{}^-)$ as shown in Lemma~\ref{lem:p_elm}. It yields an exact sequence of $\Bbbk$--modules:
$$0\rightarrow \mathfrak{p}_m(R,{}^-)\rightarrow\tilde{\mathfrak{p}}_m(R,{}^-)\rightarrow\frac{R}{[R,R]+R_{(-)}}\rightarrow0.$$
Note that since the Lie superalgebra $R/([R,R]+R_{(-)})$ is super-commutative, the sequence is an exact sequence of Lie superalgebras.
\medskip

(ii) By Lemma~\ref{lem:p_elm}, every element of $\mathfrak{p}(R,{}^-)$ has a decomposition as a sum of the form (\ref{eq:p_ele}). It suffices to show that each term in this decomposition is generated by $t_{ij}(a), f_{ij}(a)$ and $g_{ij}(a)$ for $a\in R$ and $1\leqslant i\neq j\leqslant m$. This follows from the following observation:
\begin{align*}
t_{11}(a)&=\frac{1}{2}[{t}_{12}(1),{t}_{21}(a)]+\frac{1}{2}[{f}_{12}(1),{g}_{21}(a)],
&&\text{for }a\in R_{(-)},\\
t_{11}([a,b])&=[{t}_{12}(a),{t}_{21}(b)]-(-1)^{|a||b|}[{t}_{12}(1),{t}_{21}(ba)],&&\text{for } a, b\in R,\\
t_{ii}(a)-t_{11}(a)&=[t_{i1}(a),t_{1i}(1)],&&\text{for }a\in R, 2\leqslant i\leqslant m,\\
e_{i,m+i}(a)&=\frac{1}{2}[t_{ij}(1),f_{ji}(a)],&&\text{for }a\in R_{(+)}, 1\leqslant i\neq j\leqslant m,\\
e_{m+i,i}(a)&=\frac{1}{2}[g_{ij}(a),t_{ji}(1)],&&\text{for }a\in R_{(-)},  1\leqslant i\neq j\leqslant m.
\end{align*}

(iii) It is known from (ii) that the Lie superalgebra $\mathfrak{p}_m(R,{}^-)$ is generated by ${t}_{ij}(a),{f}_{ij}(a),{g}_{ij}(a)$ for $a\in R$ and $1\leqslant i\neq j\leqslant m$. We shall show that
these elements are also contained in its derived Lie superalgebra $[\mathfrak{p}_m(R,{}^-),\mathfrak{p}_m(R,{}^-)]$. Note that $m\geqslant3$, for $1\leqslant i\neq j\leqslant m$, we may choose $1\leqslant k\leqslant m$ such that $i,j,k$ are pairwise distinct. Then the equalities
\begin{equation*}
{t}_{ij}(a)=[{t}_{ik}(1),{t}_{kj}(a)],\quad
{f}_{ij}(a)=[{t}_{ik}(1),{f}_{kj}(a)],\quad\text{ and }
{g}_{ij}(a)=[{g}_{ik}(a),{t}_{kj}(1)]
\end{equation*}
imply that ${t}_{ij}(a),{f}_{ij}(a),{g}_{ij}(a)\in[\mathfrak{p}_m(R,{}^-),\mathfrak{p}_m(R,{}^-)]$. Hence, $\mathfrak{p}_m(R,{}^-)$ is perfect for $m\geqslant3$.
\end{proof}

\begin{remark}
\label{rmk:p1_impft}
The Lie superalgebra $\mathfrak{p}_1(R,{}^-)$ is not necessarily perfect. For instance, if $R$ is  super-commutative, then
$$\tilde{\mathfrak{p}}_1(R,\rho):=\left\{\begin{pmatrix} a&b\\0&-a\end{pmatrix}\middle| a,b\in R\right\},\text{ and }\mathfrak{p}_1(R,\rho)=\left\{\begin{pmatrix} 0&b\\0&0\end{pmatrix}\middle| b\in R\right\}.$$
The Lie superalgebra $\mathfrak{p}_1(R,\rho)$ is not perfect since $[\mathfrak{p}_1(R,\rho),\mathfrak{p}_1(R,\rho)]=0$. In general, the condition for the perfectness of $\mathfrak{p}_1(R,{}^-)$ is unknown yet. Similarly, the Lie superalgebra $\mathfrak{p}_2(R,{}^-)$ is also not necessarily perfect. Hence, the existence of a universal central extension of $\mathfrak{p}_1(R,{}^-)$ or $\mathfrak{p}_2(R,{}^-)$ is not ensured. We only consider the second homology of $\mathfrak{p}_m(R,{}^-)$ for $m\geqslant3$.
\end{remark}

\section{Steinberg Periplectic Lie Superalgebras}
\label{sec:stp}
In the previous section, we have shown that the Lie superalgebra $\mathfrak{p}_m(R,{}^-)$ is perfect for $m\geqslant3$. The perfectness allows us to further study its universal central extension, whose kernel will finally provide us with the second homology of $\mathfrak{p}_m(R,{}^-)$.

In this section, we will introduce the notion of Steinberg periplectic Lie superalgebra $\mathfrak{stp}_m(R,{}^-)$ and prove that it is a central extension of $\mathfrak{p}_m(R,{}^-)$. Its universality will be discussed in the subsequent sections.

\begin{definition}
\label{def:stp}
Let $m\geqslant3$ and $(R,{}^-)$ be a unital associative superalgebra with superinvolution. \textit{The Steinberg periplectic Lie superalgebra }$\mathfrak{stp}_m(R,{}^-)$ is the Lie superalgebra generated by homogeneous elements $\mathbf{t}_{ij}(a)$, $\mathbf{f}_{ij}(a)$, $\mathbf{g}_{ij}(a)$ for $a\in R$ and $1\leqslant i\neq j\leqslant m$, with degrees $|a|$, $|a|+1$, $|a|+1$ respectively, and subject to the relations:
\begin{align}
&a\mapsto\mathbf{t}_{ij}(a),\mathbf{f}_{ij}(a),\mathbf{g}_{ij}(a)\text{ are all }\Bbbk\text{-linear},
&&\text{for }i\neq j,\tag{STP00}\label{STP00}\\
&\mathbf{f}_{ij}(\bar{a})=\mathbf{f}_{ji}(\rho(a)),
&&\text{for }i\neq j,\tag{STP01}\label{STP01}\\
&\mathbf{g}_{ij}(\bar{a})=-\mathbf{g}_{ji}(\rho(a)),
&&\text{for }i\neq j,\tag{STP02}\label{STP02}\\
&[\mathbf{t}_{ij}(a),\mathbf{t}_{jk}(b)]=\mathbf{t}_{ik}(ab),
&&\text{for pairwise distinct }i,j,k,\tag{STP03}\label{STP03}\\
&[\mathbf{t}_{ij}(a),\mathbf{t}_{kl}(b)]=0,
&&\text{for }i\neq j\neq k\neq l\neq i,\tag{STP04}\label{STP04}\\
&[\mathbf{t}_{ij}(a),\mathbf{f}_{jk}(b)]=\mathbf{f}_{ik}(ab),
&&\text{for pairwise distinct }i,j,k,\tag{STP05}\label{STP05}\\
&[\mathbf{t}_{ij}(a),\mathbf{f}_{kl}(b)]=0,
&&\text{for }i\neq j\neq k\neq l\neq j,\tag{STP06}\label{STP06}\\
&[\mathbf{g}_{ij}(a),\mathbf{t}_{jk}(b)]=\mathbf{g}_{ik}(ab),
&&\text{for pairwise distinct } i,j,k,\tag{STP07}\label{STP07}\\
&[\mathbf{g}_{ij}(a),\mathbf{t}_{kl}(b)]=0,
&&\text{for }l\neq k\neq j\neq i\neq k,\tag{STP08}\label{STP08}\\
&[\mathbf{f}_{ij}(a),\mathbf{f}_{kl}(b)]=0,
&&\text{for }i\neq j,\text{ and }k\neq l,\tag{STP09}\label{STP09}\\
&[\mathbf{g}_{ij}(a),\mathbf{g}_{kl}(b)]=0,
&&\text{for }i\neq j,\text{ and }k\neq l,\tag{STP10}\label{STP10}\\
&[\mathbf{f}_{ij}(a),\mathbf{g}_{jk}(b)]=\mathbf{t}_{ik}(ab),
&&\text{for pairwise distinct }i,j,k,\tag{STP11}\label{STP11}\\
&[\mathbf{f}_{ij}(a),\mathbf{g}_{kl}(b)]=0,
&&\text{for pairwise distinct }i,j,k,l,\tag{STP12}\label{STP12}
\end{align}
where $a,b\in R$ and $1\leqslant i,j,k,l\leqslant m$.
\end{definition}

As shown in Proposition~\ref{prop:p_pft} (ii), the Lie superalgebra $\mathfrak{p}_m(R,{}^-)$ is generated by $t_{ij}(a), f_{ij}(a)$ and $g_{ij}(a)$ for $a\in R$ and $1\leqslant i\neq j\leqslant m$. These generators satisfy all relations (\ref{STP00})-(\ref{STP12}). Hence, there is a canonical homomorphism of Lie superalgebras
\begin{equation}
\psi:\mathfrak{stp}_m(R,{}^-)\rightarrow\mathfrak{p}_m(R,{}^-),\label{eq:can_hom}
\end{equation}
such that 
$$\psi(\mathbf{t}_{ij}(a))=t_{ij}(a),\psi(\mathbf{f}_{ij}(a))=f_{ij}(a)\text{ and }\psi(\mathbf{g}_{ij}(a))=g_{ij}(a), \text{ for } a\in R.$$ 
Our aim in this section is to show the canonical homomorphism $\psi:\mathfrak{stp}_m(R,{}^-)\rightarrow\mathfrak{p}_m(R,{}^-)$ is a central extension. 
\bigskip

It is easy to observe that the Lie superalgebra $\mathfrak{p}_m(R,{}^-)$ has a decomposition into the direct sum of three Lie sub-superalgebras:
\begin{equation}
\mathfrak{p}_m(R,{}^-)=\mathfrak{p}_m^-(R,{}^-)\oplus\mathfrak{p}_m^0(R,{}^-)\oplus\mathfrak{p}_m^+(R,{}^-),\label{eq:p_dec}
\end{equation}
where $\mathfrak{p}_m^0(R,{}^-)$ consists of all diagonal matrices in $\mathfrak{p}_m(R,{}^-)$,
\begin{align*}
\mathfrak{p}^-(R,{}^-)&=\left\{\begin{pmatrix}A&0\\C&\overline{A}^t\end{pmatrix}\in\mathfrak{p}_m(R,{}^-)\middle|
A\text{ is strictly lower triangular and }\overline{C}^t=-\rho(C)\right\},\text{ and }\\
\mathfrak{p}^+(R,{}^-)&=\left\{\begin{pmatrix}A&B\\0&\overline{A}^t\end{pmatrix}\in\mathfrak{p}_m(R,{}^-)\middle|
A\text{ is strictly upper triangular and }\overline{B}^t=\rho(B)\right\}.
\end{align*}
We first show that the Steinberg periplectic Lie superalgebra $\mathfrak{stp}_m(R,{}^-)$ also possesses a similar decomposition. 

\begin{lemma}
\label{lem:munui_wd}
In $\mathfrak{stp}_m(R,{}^-)$, the following equalities hold:
\begin{align*}
[\mathbf{t}_{ij}(a),\mathbf{f}_{ji}(b)]=[\mathbf{t}_{ik}(a),\mathbf{f}_{ki}(b)],\text{ and }
[\mathbf{g}_{ij}(a),\mathbf{t}_{ji}(b)]=[\mathbf{g}_{ik}(a),\mathbf{t}_{ki}(b)],
\end{align*}
for $a,b\in R$ and $1\leqslant i,j,k\leqslant m$ with $i\neq j,k$.
\end{lemma}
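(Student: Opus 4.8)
The plan is to derive each of the two equalities from the defining relations of $\mathfrak{stp}_m(R,{}^-)$ by a single uniform trick: split one of the two bracketed factors into a bracket of two $\mathbf{t}$-generators (via (\ref{STP03}) or (\ref{STP07})), expand with the super-Jacobi identity, and observe that of the two resulting terms one vanishes by a ``disjoint index'' relation while the other is exactly the desired right-hand side. We may assume throughout that $j\neq k$, since for $j=k$ both equalities are trivially true; then $i,j,k$ are three distinct indices, and nothing beyond $m\geqslant 3$ is needed because the whole argument takes place among these three indices.

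For the first equality, I would use (\ref{STP03}) to write $\mathbf{t}_{ij}(a)=[\mathbf{t}_{ik}(a),\mathbf{t}_{kj}(1)]$ and then expand
$$[\mathbf{t}_{ij}(a),\mathbf{f}_{ji}(b)]=\bigl[[\mathbf{t}_{ik}(a),\mathbf{t}_{kj}(1)],\mathbf{f}_{ji}(b)\bigr]=\bigl[\mathbf{t}_{ik}(a),[\mathbf{t}_{kj}(1),\mathbf{f}_{ji}(b)]\bigr]\pm\bigl[\mathbf{t}_{kj}(1),[\mathbf{t}_{ik}(a),\mathbf{f}_{ji}(b)]\bigr]$$
by the super-Jacobi identity. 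The inner bracket $[\mathbf{t}_{ik}(a),\mathbf{f}_{ji}(b)]$ vanishes by (\ref{STP06}) (the index pattern $i\neq k$, $k\neq j$, $j\neq i$, $i\neq k$ is exactly the one required there), so the second summand disappears regardless of the Jacobi sign; and $[\mathbf{t}_{kj}(1),\mathbf{f}_{ji}(b)]=\mathbf{f}_{ki}(b)$ by (\ref{STP05}) applied to the distinct triple $k,j,i$. This gives $[\mathbf{t}_{ij}(a),\mathbf{f}_{ji}(b)]=[\mathbf{t}_{ik}(a),\mathbf{f}_{ki}(b)]$.

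For the second equality I would proceed symmetrically, writing $\mathbf{g}_{ij}(a)=[\mathbf{g}_{ik}(a),\mathbf{t}_{kj}(1)]$ via (\ref{STP07}) and expanding $\bigl[[\mathbf{g}_{ik}(a),\mathbf{t}_{kj}(1)],\mathbf{t}_{ji}(b)\bigr]$ by super-Jacobi. Here $[\mathbf{t}_{kj}(1),\mathbf{t}_{ji}(b)]=\mathbf{t}_{ki}(b)$ by (\ref{STP03}), while $[\mathbf{g}_{ik}(a),\mathbf{t}_{ji}(b)]=0$ by (\ref{STP08}) (the pattern $i\neq j$, $j\neq k$, $k\neq i$, $i\neq j$ matches), leaving $[\mathbf{g}_{ij}(a),\mathbf{t}_{ji}(b)]=[\mathbf{g}_{ik}(a),\mathbf{t}_{ki}(b)]$, as required.

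The computation is essentially mechanical once the generator that splits is chosen correctly; the only place demanding a little attention—and the closest thing to an obstacle—is verifying that the index constraints in the vanishing relations (\ref{STP06}) and (\ref{STP08}) are genuinely met by the shifted index pairs $(i,k)$ and $(j,i)$, and noting that the sign produced by super-Jacobi on the vanishing term is immaterial. No assumption on $m$ beyond $m\geqslant 3$ enters.
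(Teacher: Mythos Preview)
Your proposal is correct and is essentially the same argument as the paper's: both split one factor via (\ref{STP03}) or (\ref{STP07}), expand by super-Jacobi, kill one term with (\ref{STP06}) or (\ref{STP08}), and simplify the other with (\ref{STP05}) or (\ref{STP03}). The only cosmetic difference is that the paper starts from the right-hand side and works back to the left, whereas you start from the left; the index checks and relations invoked are identical.
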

\begin{proof}
We assume $i, j, k$ are pairwise distinct and deduce from (\ref{STP03}), (\ref{STP05}) and (\ref{STP06}) that
\begin{align*}
[\mathbf{t}_{ik}(a),\mathbf{f}_{ki}(b)]&=[[\mathbf{t}_{ij}(a),\mathbf{t}_{jk}(1)],\mathbf{f}_{ki}(b)]\\
&=[[\mathbf{t}_{ij}(a),\mathbf{f}_{ki}(b)],\mathbf{t}_{jk}(1)]+[\mathbf{t}_{ij}(a),
[\mathbf{t}_{jk}(1),\mathbf{f}_{ki}(b)]]\\
&=0+[\mathbf{t}_{ij}(a),\mathbf{f}_{ji}(b)]\\
&=[\mathbf{t}_{ij}(a),\mathbf{f}_{ji}(b)].
\end{align*}
Similarly, $[\mathbf{g}_{ij}(a),\mathbf{t}_{ji}(b)]=[\mathbf{g}_{ik}(a),\mathbf{t}_{ki}(b)]$ follows from (\ref{STP03}), (\ref{STP07}) and (\ref{STP08}).
\end{proof}

Lemma~\ref{lem:munui_wd} permits us to introduce the following well-defined elements of $\mathfrak{stp}_m(R,{}^-)$:
\begin{align*}
\mathbf{f}_{i}(a):&=[\mathbf{t}_{ij}(1),\mathbf{f}_{ji}(a)],&&\text{for some }j\neq i,\\
\mathbf{g}_{i}(a):&=[\mathbf{g}_{ij}(a),\mathbf{t}_{ji}(1)],&&\text{for some }j\neq i,\\
\mathbf{h}_{ij}(a,b):&=[\mathbf{f}_{ij}(a),\mathbf{g}_{ji}(b)],&&\text{for }i\neq j,
\end{align*}
where $a,b\in R$ and $1\leqslant i,j\leqslant m$. They satisfy
$$\mathbf{f}_i(\bar{a})=\mathbf{f}_i(\rho(a)),\text{ and }
\mathbf{g}_i(\bar{a})=-\mathbf{g}_i(\rho(a)),$$
for $a\in R$ and $1\leqslant i\leqslant m$.

\begin{proposition}
\label{prop:stp_tridec}
The Lie superalgebra $\mathfrak{stp}_m(R,{}^-)$ is decomposed into a direct sum of $\Bbbk$-modules:
\begin{equation}
\mathfrak{stp}_m(R,{}^-)=\mathfrak{stp}_m^-(R,{}^-)\oplus\mathfrak{stp}_m^0(R,{}^-)\oplus\mathfrak{stp}_m^+(R,{}^-),
\label{eq:stp_tridec}
\end{equation}
where
\begin{align*}
\mathfrak{stp}_m^0(R,{}^-):&=\mathrm{span}_{\Bbbk}\{\mathbf{h}_{ij}(a,b)|a,b\in R, 1\leqslant i\neq j\leqslant m\},\\
\mathfrak{stp}_m^+(R,{}^-):&=\mathrm{span}_{\Bbbk}\{\mathbf{t}_{ij}(a),\mathbf{f}_{ij}(a),\mathbf{f}_k(a)|a\in R, 1\leqslant i,j,k\leqslant m\text{ and }i<j\},\\
\mathfrak{stp}_m^-(R,{}^-):&=\mathrm{span}_{\Bbbk}\{\mathbf{t}_{ij}(a),\mathbf{g}_{ij}(a),\mathbf{g}_k(a)|a\in R, 1\leqslant i,j,k\leqslant m \text{ and }i>j\},
\end{align*}
are all Lie sub-superalgebras of $\mathfrak{stp}_m(R,{}^-)$ and $[\mathfrak{stp}_m^0(R,{}^-),\mathfrak{stp}_m^{\pm}(R,{}^-)]\subseteq\mathfrak{stp}_m^{\pm}(R,{}^-)$.
\end{proposition}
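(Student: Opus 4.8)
The plan is to prove the decomposition \eqref{eq:stp_tridec} by the standard ``spanning plus splitting'' strategy used for Steinberg-type Lie (super)algebras. First I would observe that the three subspaces $\mathfrak{stp}_m^{\pm}$ and $\mathfrak{stp}_m^0$ together span the whole algebra: by Proposition~\ref{prop:p_pft}(ii) (whose proof carries over verbatim at the level of generators of $\mathfrak{stp}_m$) the algebra is generated by the $\mathbf{t}_{ij}(a)$, $\mathbf{f}_{ij}(a)$, $\mathbf{g}_{ij}(a)$, so it suffices to check that the $\Bbbk$-span of $\mathfrak{stp}_m^- + \mathfrak{stp}_m^0 + \mathfrak{stp}_m^+$ is closed under bracketing with each generator. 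This is a finite case check driven entirely by the relations (STP00)--(STP12) together with the derived identities \eqref{eq:mui}--\eqref{eq:kp}; the only mildly delicate brackets are $[\mathbf{t}_{ij}(a),\mathbf{f}_{ji}(b)]$ for $i<j$ and $[\mathbf{g}_{ij}(a),\mathbf{t}_{ji}(b)]$ for $i>j$, which by Lemma~\ref{lem:munui_wd} reduce to $\mathbf{f}_i$-- and $\mathbf{g}_i$--type elements (hence land in $\mathfrak{stp}_m^+$ resp. $\mathfrak{stp}_m^-$), and $[\mathbf{f}_{ij}(a),\mathbf{g}_{ji}(b)]=\mathbf{h}_{ij}(a,b)\in\mathfrak{stp}_m^0$. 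Simultaneously this bookkeeping shows each of the three pieces is a subalgebra and that $[\mathfrak{stp}_m^0,\mathfrak{stp}_m^{\pm}]\subseteq\mathfrak{stp}_m^{\pm}$ (for the latter one computes $[\mathbf{h}_{ij}(a,b),\mathbf{t}_{kl}(c)]$ etc. via the Jacobi identity, expanding $\mathbf{h}_{ij}(a,b)=[\mathbf{f}_{ij}(a),\mathbf{g}_{ji}(b)]$ and applying (STP05)--(STP08), (STP11)--(STP12)).

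The second and genuinely harder half is the \emph{directness} of the sum. For this I would use the canonical homomorphism $\psi:\mathfrak{stp}_m(R,{}^-)\to\mathfrak{p}_m(R,{}^-)$ of \eqref{eq:can_hom}. Under $\psi$ the three subspaces map into the spaces of lower-triangular, diagonal (block), and upper-triangular matrices in $\mathfrak{p}_m(R,{}^-)$ respectively, and in $\mathfrak{p}_m(R,{}^-)$ those three matrix subspaces genuinely form a direct sum. So if $x^- + x^0 + x^+ = 0$ with $x^{\epsilon}\in\mathfrak{stp}_m^{\epsilon}$, applying $\psi$ forces $\psi(x^-)=\psi(x^0)=\psi(x^+)=0$. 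It then remains to show that $\psi$ is \emph{injective on each of $\mathfrak{stp}_m^+$, $\mathfrak{stp}_m^0$, $\mathfrak{stp}_m^-$ separately} — i.e. that no nontrivial relations among the spanning elements of a single piece are hidden in $\ker\psi$. The way to get this is to write down explicit $\Bbbk$-module descriptions: for $\mathfrak{stp}_m^{\pm}$ one shows the spanning set is a basis, parametrized by $\bigoplus_{i<j} R$ (for the $\mathbf t$'s) $\oplus \bigoplus_{i<j} R$ (for the $\mathbf f$'s) $\oplus \bigoplus_i R_{(+)}$ (for the $\mathbf f_i$'s), using (STP00)--(STP02) to eliminate redundancy, and then checks this matches the corresponding matrix subspace of $\mathfrak{p}_m(R,{}^-)$ described in Lemma~\ref{lem:p_elm}; for $\mathfrak{stp}_m^0$ one must analyze the relations among the $\mathbf{h}_{ij}(a,b)$ more carefully.

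The main obstacle is exactly this injectivity of $\psi$ on the pieces, and within it the Cartan-type part $\mathfrak{stp}_m^0$. The image $\psi(\mathbf{h}_{ij}(a,b)) = [f_{ij}(a),g_{ji}(b)]$ is a concrete diagonal matrix in $\mathfrak{p}_m(R,{}^-)$, but the $\mathbf{h}_{ij}(a,b)$ a priori satisfy several nonobvious relations (bilinearity issues from (STP00), symmetry relations inherited from (STP01)--(STP02), and ``locality'' relations coming from applying the Jacobi identity to $[\mathbf{f}_{ij}(a),[\mathbf{g}_{ji}(b),\mathbf{t}_{jk}(1)]]$ and similar, which relate $\mathbf{h}_{ij}$ to $\mathbf{h}_{ik}$), so one cannot naively take the $\mathbf h_{ij}(a,b)$ to be a free family. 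To handle this I would first derive, purely from (STP00)--(STP12), a complete list of relations among the $\mathbf{h}_{ij}(a,b)$ — expressing each in terms of a fixed ``standard'' set such as $\{\mathbf h_{1j}(a,b)\}$ or a minimal generating set — and show the resulting reduced $\Bbbk$-module injects into the diagonal matrices via $\psi$. Once $\psi$ is seen to be injective on $\mathfrak{stp}_m^0$ and on $\mathfrak{stp}_m^{\pm}$, directness follows immediately, completing the proof; the subalgebra and $[\mathfrak{stp}_m^0,\mathfrak{stp}_m^{\pm}]\subseteq\mathfrak{stp}_m^{\pm}$ assertions having already been dispatched in the spanning step.
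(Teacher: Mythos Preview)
Your spanning argument and the subalgebra/normalizer checks match the paper's approach. The directness argument is also on the right track---use $\psi$ to reduce to the matrix triangular decomposition---but you have misidentified what must be shown, and the ``main obstacle'' you describe is both unnecessary and in fact \emph{false}.

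You do \emph{not} need $\psi$ to be injective on $\mathfrak{stp}_m^0(R,{}^-)$, and in general it is not: the entire kernel of $\psi$ sits inside $\mathfrak{stp}_m^0(R,{}^-)$ (this is Proposition~\ref{prop:stp_ce}, and the kernel is the nontrivial homology group computed later in the paper). So your plan to ``derive a complete list of relations among the $\mathbf{h}_{ij}(a,b)$ and show the reduced $\Bbbk$-module injects into the diagonal matrices via $\psi$'' cannot succeed. What actually suffices is injectivity of $\psi$ on $\mathfrak{stp}_m^+$ and $\mathfrak{stp}_m^-$ only: if $x^-+x^0+x^+=0$, then applying $\psi$ gives $\psi(x^{\pm})=0$, hence $x^{\pm}=0$ by injectivity on the $\pm$ pieces, and then $x^0=0$ follows for free. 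This is exactly how the paper argues. The injectivity on $\mathfrak{stp}_m^{\pm}$ is the easy computation you sketched (with one subtlety: from $\psi(\mathbf{f}_i(c_i))=0$ you only get $c_i+\rho(\bar c_i)=0$, not $c_i=0$; one then uses $\mathbf{f}_i(\bar c)=\mathbf{f}_i(\rho(c))$ to conclude $\mathbf{f}_i(c_i)=\tfrac12(\mathbf{f}_i(c_i)+\mathbf{f}_i(\rho(\bar c_i)))=0$). Drop the $\mathfrak{stp}_m^0$ injectivity claim entirely and the proof goes through.
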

\begin{proof}
By (\ref{STP00})-(\ref{STP12}), it is directly verified that $\mathfrak{stp}_m^0(R,{}^-)$, $\mathfrak{stp}_m^-(R,{}^-)$ and $\mathfrak{stp}_m^+(R,{}^-)$ are all Lie sub-superalgebras of $\mathfrak{stp}_m(R,{}^-)$ and $$[\mathfrak{stp}_m^0(R,{}^-),\mathfrak{stp}_m^{\pm}(R,{}^-)]\subseteq\mathfrak{stp}_m^{\pm}(R,{}^-).$$

Let $\mathfrak{g}:=\mathfrak{stp}_m^-(R,{}^-)+\mathfrak{stp}_m^0(R,{}^-)+\mathfrak{stp}_m^+(R,{}^-)$. We show that $\mathfrak{g}=\mathfrak{stp}_m(R,{}^-)$ and the sum is direct. Indeed, it is observed from the defining relations of $\mathfrak{stp}_m(R,{}^-)$ that the $\Bbbk$--module $\mathfrak{g}$ is invariant under the adjoint action of $\mathbf{t}_{ij}(a)$, $\mathbf{f}_{ij}(a)$ and $\mathbf{g}_{ij}(a)$ for $a\in R$ and $1\leqslant i\neq j\leqslant m$, which generate the Lie superalgebra $\mathfrak{stp}_m(R,{}^-)$. In other words, $\mathfrak{g}$ is an ideal of the Lie superalgebra $\mathfrak{stp}_m(R,{}^-)$, containing a complete family of its generators. Hence, $\mathfrak{g}=\mathfrak{stp}_m(R,{}^-)$.

In order to show the sum is direct, we need to prove that the restrictions $\psi|\mathfrak{stp}_m^{\pm}(R,{}^-)$ of the canonical homomorphism (\ref{eq:can_hom}) are injective. Suppose that $\boldsymbol{x}^+\in\mathfrak{stp}_m^+(R,{}^-)$ satisfying $\psi(\boldsymbol{x}^+)=0$. Write
$$\boldsymbol{x}^+=\sum\limits_{1\leqslant i<j\leqslant m}\mathbf{t}_{ij}(a_{ij})+\sum\limits_{1\leqslant i<j\leqslant m}\mathbf{f}_{ij}(b_{ij})+\sum\limits_{i}\mathbf{f}_i(c_i),$$
where $a_{ij}, b_{ij}, c_i\in R$. Applying $\psi$ to $\boldsymbol{x}^+$, we obtain
$$0=\psi(\boldsymbol{x}^+)=\sum\limits_{1\leqslant i<j\leqslant m}({t}_{ij}(a_{ij})+{f}_{ij}(b_{ij}))
+\sum\limits_{i=1}^me_{i,m+i}(c_i+\rho(\bar{c}_i))\in\mathfrak{p}_m(R,{}^-).$$
It follows that $a_{ij}=b_{ij}=0$ for $1\leqslant i<j\leqslant m$ and $c_i+\rho(\bar{c}_i)=0$ for $i=1,\ldots,m$. Then $\mathbf{t}_{ij}(a_{ij})=\mathbf{f}_{ij}(b_{ij})=0$ for $1\leqslant i<j\leqslant m$ and
$$\mathbf{f}_i(c_i)=\frac{1}{2}\mathbf{f}_i(c_i+\rho(\bar{c}_i))=0,$$
for $i=1,\ldots,m$. i.e., $\boldsymbol{x}^+=0$.
Hence, $\psi|\mathfrak{stp}_m^+(R,{}^-)$ is injective. So is $\psi|\mathfrak{stp}_m^-(R,{}^-)$.

Now, if $0=\boldsymbol{x}^-+\boldsymbol{x}^0+\boldsymbol{x}^+$ for $\boldsymbol{x}^0\in\mathfrak{stp}_m^0(R,{}^-)$ and $\boldsymbol{x}^{\pm}\in\mathfrak{stp}_m^{\pm}(R,{}^-)$, then
$$0=\psi(\boldsymbol{x}^-)+\psi(\boldsymbol{x}^0)+\psi(\boldsymbol{x}^+)\in\mathfrak{p}_m(R,{}^-),$$
where $\psi(\boldsymbol{x}^-)$ (resp. $\psi(\boldsymbol{x}^0)$ and $\psi(\boldsymbol{x}^+)$) is contained in $\mathfrak{p}_m^-(R,{}^-)$ (resp. $\mathfrak{p}_m^0(R,{}^-)$ and $\mathfrak{p}_m^+(R,{}^-)$). The direct sum decomposition (\ref{eq:p_dec}) ensures that $\psi(\boldsymbol{x}^-)=\psi(\boldsymbol{x}^0)=\psi(\boldsymbol{x}^+)=0$. Since $\psi|\mathfrak{stp}_m^{\pm}(R,{}^-)$ are injective, we conclude that $\boldsymbol{x}^-=\boldsymbol{x}^+=0$, i.e., the sum (\ref{eq:stp_tridec}) is direct.
\end{proof}

\begin{proposition}
\label{prop:stp_ce}
Let $m\geqslant3$ and $(R,{}^-)$ be a unital associative superalgebra with superinvolution. Then $\psi:\mathfrak{stp}_m(R,{}^-)\rightarrow\mathfrak{p}_m(R,{}^-)$ is a central extension and $\ker\psi\subseteq\mathfrak{stp}_m^0(R,{}^-)$.
\end{proposition}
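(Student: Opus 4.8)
The plan is to show two things: first that $\ker\psi$ lies in $\mathfrak{stp}_m^0(R,{}^-)$, and second that $\ker\psi$ is central. Since Proposition~\ref{prop:stp_tridec} already establishes the triangular decomposition $\mathfrak{stp}_m(R,{}^-)=\mathfrak{stp}_m^-\oplus\mathfrak{stp}_m^0\oplus\mathfrak{stp}_m^+$ and the injectivity of $\psi$ restricted to $\mathfrak{stp}_m^{\pm}$, the first claim is immediate: writing $\boldsymbol{x}=\boldsymbol{x}^-+\boldsymbol{x}^0+\boldsymbol{x}^+$ with $\psi(\boldsymbol{x})=0$, one notes that $\psi(\boldsymbol{x}^-)$, $\psi(\boldsymbol{x}^0)$, $\psi(\boldsymbol{x}^+)$ are lower-triangular, diagonal, upper-triangular respectively, so each vanishes separately, forcing $\boldsymbol{x}^{\pm}=0$ and hence $\boldsymbol{x}=\boldsymbol{x}^0\in\mathfrak{stp}_m^0(R,{}^-)$.

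For the centrality, I would take $\boldsymbol{x}^0\in\ker\psi\subseteq\mathfrak{stp}_m^0(R,{}^-)$ and show it commutes with every generator $\mathbf{t}_{ij}(a)$, $\mathbf{f}_{ij}(a)$, $\mathbf{g}_{ij}(a)$. The key structural fact to exploit is that $[\mathfrak{stp}_m^0,\mathfrak{stp}_m^{\pm}]\subseteq\mathfrak{stp}_m^{\pm}$, so $[\boldsymbol{x}^0,\mathbf{t}_{ij}(a)]$ lies in $\mathfrak{stp}_m^+$ when $i<j$ (and in $\mathfrak{stp}_m^-$ when $i>j$), similarly for $\mathbf{f}$ and $\mathbf{g}$. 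Since $\psi$ is a homomorphism and $\psi(\boldsymbol{x}^0)=0$, we get $\psi([\boldsymbol{x}^0,\mathbf{t}_{ij}(a)])=[\,0,t_{ij}(a)\,]=0$, so $[\boldsymbol{x}^0,\mathbf{t}_{ij}(a)]$ is an element of $\mathfrak{stp}_m^{+}$ (or $\mathfrak{stp}_m^-$) lying in $\ker\psi$; by the injectivity of $\psi|\mathfrak{stp}_m^{\pm}$ it must be zero. The same argument kills the brackets with all $\mathbf{f}_{ij}(a)$ and $\mathbf{g}_{ij}(a)$. Since these elements generate $\mathfrak{stp}_m(R,{}^-)$, we conclude $\boldsymbol{x}^0$ is central.

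The one genuinely delicate point — and the step I expect to be the main obstacle — is verifying that $[\mathfrak{stp}_m^0,\mathfrak{stp}_m^{\pm}]\subseteq\mathfrak{stp}_m^{\pm}$ holds at the level of \emph{all} of $\mathfrak{stp}_m^0$, i.e. that brackets of $\mathbf{h}_{ij}(a,b)$ with the generators $\mathbf{f}_{k\ell}$, $\mathbf{g}_{k\ell}$, $\mathbf{t}_{k\ell}$ of $\mathfrak{stp}_m^{\pm}$ actually land back inside the span defining $\mathfrak{stp}_m^{\pm}$ — in particular producing the auxiliary elements $\mathbf{f}_k(a)$, $\mathbf{g}_k(a)$ when an index coincides. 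This is already asserted in the proof of Proposition~\ref{prop:stp_tridec}, so strictly it may be invoked, but it rests on a careful case analysis using the Jacobi identity together with relations (\ref{STP03})--(\ref{STP12}) and the well-definedness in Lemma~\ref{lem:munui_wd}. Apart from that, everything reduces to the observations about triangularity of images under $\psi$ and the injectivity statements already proved.

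\begin{proof}
By Proposition~\ref{prop:stp_tridec}, $\mathfrak{stp}_m(R,{}^-)=\mathfrak{stp}_m^-(R,{}^-)\oplus\mathfrak{stp}_m^0(R,{}^-)\oplus\mathfrak{stp}_m^+(R,{}^-)$, and the restrictions $\psi|\mathfrak{stp}_m^{\pm}(R,{}^-)$ are injective.

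Let $\boldsymbol{x}\in\ker\psi$ and write $\boldsymbol{x}=\boldsymbol{x}^-+\boldsymbol{x}^0+\boldsymbol{x}^+$ with $\boldsymbol{x}^0\in\mathfrak{stp}_m^0(R,{}^-)$ and $\boldsymbol{x}^{\pm}\in\mathfrak{stp}_m^{\pm}(R,{}^-)$. Then $\psi(\boldsymbol{x}^-)$, $\psi(\boldsymbol{x}^0)$, $\psi(\boldsymbol{x}^+)$ are a lower-triangular, a diagonal, and an upper-triangular matrix in $\mathfrak{p}_m(R,{}^-)$ respectively, so $\psi(\boldsymbol{x})=0$ forces $\psi(\boldsymbol{x}^-)=\psi(\boldsymbol{x}^0)=\psi(\boldsymbol{x}^+)=0$. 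Since $\psi|\mathfrak{stp}_m^{\pm}(R,{}^-)$ is injective, $\boldsymbol{x}^-=\boldsymbol{x}^+=0$, whence $\boldsymbol{x}=\boldsymbol{x}^0\in\mathfrak{stp}_m^0(R,{}^-)$. This proves $\ker\psi\subseteq\mathfrak{stp}_m^0(R,{}^-)$.

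It remains to show $\boldsymbol{x}=\boldsymbol{x}^0$ is central. By Proposition~\ref{prop:p_pft}(ii) and the relations defining $\mathfrak{stp}_m(R,{}^-)$, the elements $\mathbf{t}_{ij}(a)$, $\mathbf{f}_{ij}(a)$, $\mathbf{g}_{ij}(a)$ with $a\in R$ and $1\leqslant i\neq j\leqslant m$ generate $\mathfrak{stp}_m(R,{}^-)$, so it suffices to show $\boldsymbol{x}^0$ commutes with each of these. Consider $\mathbf{t}_{ij}(a)$ with $i<j$; then $\mathbf{t}_{ij}(a)\in\mathfrak{stp}_m^+(R,{}^-)$, and by Proposition~\ref{prop:stp_tridec} we have $[\boldsymbol{x}^0,\mathbf{t}_{ij}(a)]\in[\mathfrak{stp}_m^0(R,{}^-),\mathfrak{stp}_m^+(R,{}^-)]\subseteq\mathfrak{stp}_m^+(R,{}^-)$. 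On the other hand, since $\psi$ is a homomorphism and $\psi(\boldsymbol{x}^0)=0$,
$$\psi([\boldsymbol{x}^0,\mathbf{t}_{ij}(a)])=[\psi(\boldsymbol{x}^0),t_{ij}(a)]=[0,t_{ij}(a)]=0.$$
Thus $[\boldsymbol{x}^0,\mathbf{t}_{ij}(a)]$ is an element of $\mathfrak{stp}_m^+(R,{}^-)$ lying in $\ker\psi$, and the injectivity of $\psi|\mathfrak{stp}_m^+(R,{}^-)$ gives $[\boldsymbol{x}^0,\mathbf{t}_{ij}(a)]=0$. The same argument with $\mathfrak{stp}_m^-(R,{}^-)$ in place of $\mathfrak{stp}_m^+(R,{}^-)$ handles $\mathbf{t}_{ij}(a)$ with $i>j$. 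Likewise, $\mathbf{f}_{ij}(a)$ and $\mathbf{g}_{ij}(a)$ belong to $\mathfrak{stp}_m^{\pm}(R,{}^-)$ (using (\ref{STP01}) and (\ref{STP02}) to reduce to the case $i<j$ or $i>j$), so the identical reasoning yields $[\boldsymbol{x}^0,\mathbf{f}_{ij}(a)]=[\boldsymbol{x}^0,\mathbf{g}_{ij}(a)]=0$. Hence $\boldsymbol{x}^0$ commutes with a generating set of $\mathfrak{stp}_m(R,{}^-)$ and therefore lies in the center. This shows $\psi$ is a central extension.
\end{proof}
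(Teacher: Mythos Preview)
Your proof is correct and follows essentially the same approach as the paper's own proof: both use the triangular decomposition of Proposition~\ref{prop:stp_tridec} to locate $\ker\psi$ inside $\mathfrak{stp}_m^0$, and then use the inclusion $[\mathfrak{stp}_m^0,\mathfrak{stp}_m^{\pm}]\subseteq\mathfrak{stp}_m^{\pm}$ together with the injectivity of $\psi|\mathfrak{stp}_m^{\pm}$ to kill the brackets with the generators. Your write-up is in fact slightly more explicit than the paper's in separating the $i<j$ and $i>j$ cases, but the underlying argument is identical.
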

\begin{proof}
Let $\boldsymbol{x}\in\ker\psi$. We write $\boldsymbol{x}=\boldsymbol{x}^-+\boldsymbol{x}^0+\boldsymbol{x}^+$ with respect to the decomposition~(\ref{eq:stp_tridec}). Then
$$0=\psi(\boldsymbol{x})=\psi(\boldsymbol{x}^-)+\psi(\boldsymbol{x}^0)+\psi(\boldsymbol{x}^+)\in\mathfrak{p}_m(R,{}^-),$$
where $\psi(\boldsymbol{x}^-)$ (resp. $\psi(\boldsymbol{x}^0)$ and $\psi(\boldsymbol{x}^+)$) is contained in $\mathfrak{p}_m^-(R,{}^-)$ (resp. $\mathfrak{p}_m^0(R,{}^-)$ and $\mathfrak{p}_m^+(R,{}^-)$). It follows from the direct sum decomposition (\ref{eq:p_dec}) that $\psi(\boldsymbol{x}^-)=\psi(\boldsymbol{x}^0)=\psi(\boldsymbol{x}^+)=0$, which yields $\boldsymbol{x}^+=\boldsymbol{x}^-=0$ by the injectivity of $\psi|\mathfrak{stp}_m^{\pm}(R,{}^-)$ as shown in the proof of Proposition~\ref{prop:stp_tridec}. Hence, $\boldsymbol{x}=\boldsymbol{x}^0\in\mathfrak{stp}_m^0(R,{}^-)$.

It remains to show that every element $\boldsymbol{x}\in\ker\psi$ commutes with the generators $\mathbf{t}_{ij}(a)$, $\mathbf{f}_{ij}(a)$ and $\mathbf{g}_{ij}(a)$ for $a\in R$ and $1\leqslant i\neq j\leqslant m$. Let $\boldsymbol{x}\in\ker\psi$. We have
$$\psi([\boldsymbol{x},\mathbf{t}_{ij}(a)])=
\psi([\boldsymbol{x},\mathbf{f}_{ij}(a)])=
\psi([\boldsymbol{x},\mathbf{g}_{ij}(a)])=0.$$
According to Proposition~\ref{prop:stp_tridec}, $\boldsymbol{x}\in\ker\psi\subseteq\mathfrak{stp}_m^0(R,{}^-)$ implies that each of $[\boldsymbol{x},\mathbf{t}_{ij}(a)]$, $[\boldsymbol{x},\mathbf{f}_{ij}(a)]$ and $[\boldsymbol{x},\mathbf{g}_{ij}(a)]$ is contained in either $\mathfrak{stp}_m^+(R,{}^-)$ or $\mathfrak{stp}_m^-(R,{}^-)$. Hence, the injectivity of $\psi|\mathfrak{stp}_m^{\pm}(R,{}^-)$ ensures
$$[\boldsymbol{x},\mathbf{t}_{ij}(a)]=[\boldsymbol{x},\mathbf{f}_{ij}(a)]=[\boldsymbol{x},\mathbf{g}_{ij}(a)]=0.$$
i.e., $\boldsymbol{x}$ is a central element of $\mathfrak{stp}_m(R,{}^-)$.
\end{proof}
\bigskip

In the special case where $(R,{}^-)=(S\oplus S^{\mathrm{op}},\mathrm{ex})$ for a unital associative superalgebra $S$, Example~\ref{ex:iso_p_SS} implies that the Lie superalgebra $\mathfrak{p}_m(S\oplus S^{\mathrm{op}},\mathrm{ex})$ is isomorphic to $\mathfrak{sl}_{m|m}(S)$, whose universal central extension is \textit{the Steinberg Lie superalgebra} $\mathfrak{st}_{m|m}(S)$ for $m\geqslant3$ (c.f. \cite{ChenSun2015}). Alternatively, the Steinberg periplectic Lie superalgebra $\mathfrak{stp}_m(S\oplus S^{\mathrm{op}}, \mathrm{ex})$ is shown to be a central extension of $\mathfrak{p}_m(S\oplus S^{\mathrm{op}},\mathrm{ex})$ in Proposition~\ref{prop:stp_ce}. These two central extensions of $\mathfrak{p}_{m}(S\oplus S^{\mathrm{op}},\mathrm{ex})$ are indeed isomorphic. 

\begin{proposition}
\label{prop:stpSS}
Let $m\geqslant3$ and $S$ be an arbitrary unital associative superalgebra. Then
$$\mathfrak{stp}_m(S\oplus S^{\mathrm{op}},\mathrm{ex})\cong\mathfrak{st}_{m|m}(S)$$
as Lie superalgebras over $\Bbbk$.
\end{proposition}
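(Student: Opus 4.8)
\noindent\textit{Proof proposal.}
The plan is to write down, through the matrix model of Example~\ref{ex:iso_p_SS}, mutually inverse homomorphisms between the two abstract Lie superalgebras. Recall from \cite{ChenSun2015} that $\mathfrak{st}_{m|m}(S)$ is presented by homogeneous generators $\mathbf{v}_{pq}(s)$ of parity $|p|+|q|+|s|$, for $1\leqslant p\neq q\leqslant 2m$ and homogeneous $s\in S$ (with $|p|$ as in~(\ref{eq:parity_index})), which are $\Bbbk$-linear in $s$ and satisfy the super-Steinberg relations $[\mathbf{v}_{pq}(s),\mathbf{v}_{qr}(s')]=\mathbf{v}_{pr}(ss')$ for distinct $p,q,r$ and $[\mathbf{v}_{pq}(s),\mathbf{v}_{rl}(s')]=0$ whenever $q\neq r$ and $p\neq l$; moreover $\mathbf{v}_{pq}(s)\mapsto e_{pq}(s)$ is a central extension onto $\mathfrak{sl}_{m|m}(S)$. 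Example~\ref{ex:iso_p_SS} supplies an explicit isomorphism $\Phi\colon\mathfrak{gl}_{m|m}(S)\to\widetilde{\mathfrak{p}}_m(S\oplus S^{\mathrm{op}},\mathrm{ex})$ restricting to $\mathfrak{sl}_{m|m}(S)\cong\mathfrak{p}_m(S\oplus S^{\mathrm{op}},\mathrm{ex})$, and inverting it on the generators $t_{ij},f_{ij},g_{ij}$ one reads off, for $a=a_1\oplus a_2\in S\oplus S^{\mathrm{op}}$ and $1\leqslant i\neq j\leqslant m$,
$$\Phi^{-1}(t_{ij}(a_1\oplus a_2))=e_{ij}(a_1)-e_{m+j,m+i}(a_2),\qquad \Phi^{-1}(f_{ij}(a_1\oplus a_2))=e_{i,m+j}(a_1)+e_{j,m+i}(\rho(a_2)),$$
$$\Phi^{-1}(g_{ij}(a_1\oplus a_2))=e_{m+i,j}(a_1)-e_{m+j,i}(\rho(a_2)),\qquad \Phi^{-1}\psi(\mathbf{f}_i(a\oplus 0))=e_{i,m+i}(a),\qquad \Phi^{-1}\psi(\mathbf{g}_i(a\oplus 0))=e_{m+i,i}(a),$$
the last two coming from~(\ref{eq:mui})--(\ref{eq:nui}); in particular every matrix unit $e_{pq}(s)$ with $p\neq q$ is accounted for.

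First I would define $\beta\colon\mathfrak{stp}_m(S\oplus S^{\mathrm{op}},\mathrm{ex})\to\mathfrak{st}_{m|m}(S)$ on generators by, for homogeneous $a=a_1\oplus a_2$,
$$\mathbf{t}_{ij}(a)\mapsto\mathbf{v}_{ij}(a_1)-\mathbf{v}_{m+j,m+i}(a_2),\quad \mathbf{f}_{ij}(a)\mapsto\mathbf{v}_{i,m+j}(a_1)+\mathbf{v}_{j,m+i}(\rho(a_2)),\quad \mathbf{g}_{ij}(a)\mapsto\mathbf{v}_{m+i,j}(a_1)-\mathbf{v}_{m+j,i}(\rho(a_2)),$$
extended $\Bbbk$-linearly, and verify that relations~(\ref{STP00})--(\ref{STP12}) are consequences of the super-Steinberg relations. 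Each check is short: the bracket of two two-term images splits into four brackets, of which, for the relevant index pattern, exactly one has the form $[\mathbf{v}_{pq}(x),\mathbf{v}_{qr}(y)]=\mathbf{v}_{pr}(xy)$ or its super-antisymmetric partner $[\mathbf{v}_{pq}(x),\mathbf{v}_{rp}(y)]=-(-1)^{(|p|+|q|+|x|)(|r|+|p|+|y|)}\mathbf{v}_{rq}(yx)$, while the other three vanish by $[\mathbf{v}_{pq},\mathbf{v}_{rl}]=0$; the surviving term matches the right-hand side precisely because multiplication in $S\oplus S^{\mathrm{op}}$ reads $a_2\overset{\mathrm{op}}{\cdot}b_2=(-1)^{|a_2||b_2|}b_2a_2$ and $\rho$ is multiplicative up to that same sign. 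Relations~(\ref{STP01})--(\ref{STP02}), which encode $\mathbf{f}_{ij}(\bar a)=\mathbf{f}_{ji}(\rho(a))$ and $\mathbf{g}_{ij}(\bar a)=-\mathbf{g}_{ji}(\rho(a))$, become tautologies after using $\overline{a_1\oplus a_2}=a_2\oplus a_1$ and $\rho^2=\mathrm{id}$.

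Next I would define the reverse map $\gamma\colon\mathfrak{st}_{m|m}(S)\to\mathfrak{stp}_m(S\oplus S^{\mathrm{op}},\mathrm{ex})$ on the $\mathbf{v}_{pq}(s)$ by $\mathbf{v}_{ij}(s)\mapsto\mathbf{t}_{ij}(s\oplus 0)$ and $\mathbf{v}_{m+i,m+j}(s)\mapsto-\mathbf{t}_{ji}(0\oplus s)$ for $i\neq j$; $\mathbf{v}_{i,m+j}(s)\mapsto\mathbf{f}_{ij}(s\oplus 0)$, $\mathbf{v}_{m+i,j}(s)\mapsto\mathbf{g}_{ij}(s\oplus 0)$ for $i\neq j$; and $\mathbf{v}_{i,m+i}(s)\mapsto\mathbf{f}_i(s\oplus 0)$, $\mathbf{v}_{m+i,i}(s)\mapsto\mathbf{g}_i(s\oplus 0)$, the last two being unambiguous by Lemma~\ref{lem:munui_wd}. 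One checks that the super-Steinberg relations hold after applying $\gamma$: the cases with all indices in $\{1,\dots,m\}$ or all in $\{m+1,\dots,2m\}$ come directly from~(\ref{STP03})--(\ref{STP04}); the mixed cases reduce to~(\ref{STP05})--(\ref{STP12}) together with the symmetries~(\ref{STP01})--(\ref{STP02}); and the cases involving a diagonal position $e_{i,m+i}$ or $e_{m+i,i}$ require expanding $\mathbf{f}_i,\mathbf{g}_i$ via~(\ref{eq:mui})--(\ref{eq:nui}) and using the Jacobi identity. Finally, $\beta\gamma$ and $\gamma\beta$ are checked to fix each generator: $\beta\gamma$ is immediate from the formulas, and $\gamma\beta(\mathbf{f}_{ij}(a_1\oplus a_2))=\mathbf{f}_{ij}(a_1\oplus 0)+\mathbf{f}_{ji}(\rho(a_2)\oplus 0)=\mathbf{f}_{ij}(a_1\oplus a_2)$ by~(\ref{STP01}) (and likewise for $\mathbf{t},\mathbf{g}$), so $\beta$ and $\gamma$ are mutually inverse isomorphisms.

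The main obstacle is bookkeeping rather than conceptual: carrying out the relation checks for $\beta$ and $\gamma$ with every super-sign correct, and in particular matching the sign produced by super-antisymmetry in $\mathfrak{st}_{m|m}(S)$ against the sign $(-1)^{|a_2||b_2|}$ from the multiplication in $S^{\mathrm{op}}$ and the sign introduced by $\rho$. A secondary point requiring care is the ``diagonal'' elements $\mathbf{f}_i(a),\mathbf{g}_i(a)$: these are not generators of $\mathfrak{stp}_m(R,{}^-)$ but are needed for $\gamma$ to reach the matrix units $e_{i,m+i},e_{m+i,i}$ of $\mathfrak{st}_{m|m}(S)$, and one must verify that their defining brackets interact correctly with all Steinberg relations, which is where Lemma~\ref{lem:munui_wd} and Proposition~\ref{prop:stp_tridec} are used.
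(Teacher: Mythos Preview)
Your proposal is correct and follows essentially the same approach as the paper: you write down the pair of homomorphisms $\beta$ and $\gamma$, which coincide with the paper's $\tilde\phi$ and $\phi$ respectively (your $\mathbf{f}_i(s\oplus 0)$ and $\mathbf{g}_i(s\oplus 0)$ are exactly the brackets $[\mathbf{t}_{ij}(1\oplus 0),\mathbf{f}_{ji}(s\oplus 0)]$ and $[\mathbf{g}_{ij}(s\oplus 0),\mathbf{t}_{ji}(1\oplus 0)]$ that the paper uses for the diagonal generators). The paper is terser, merely asserting that the relations hold and that the two maps are inverse, whereas you spell out more of the verification strategy and correctly identify the sign bookkeeping and the treatment of the diagonal elements $\mathbf{f}_i,\mathbf{g}_i$ as the places requiring care.
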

\begin{proof}
The Steinberg Lie superalgebra $\mathfrak{st}_{m|m}(S)$ as defined in \cite{ChenSun2015} is the Lie superalgebra generated by homogeneous elements $\boldsymbol{e}_{ij}(a)$ of degree $|i|+|j|+|a|$ for $a\in R$ and $1\leqslant i\neq j\leqslant 2m$, subject to the relations:
\begin{align}
&a\mapsto\boldsymbol{e}_{ij}(a)\text{ is a }\Bbbk\text{-linear map},\tag{ST0}\label{ST0}\\
&[\boldsymbol{e}_{ij}(a),\boldsymbol{e}_{jk}(b)]=\boldsymbol{e}_{ik}(ab),
&&\text{ for pairwise distinct }i,j,k,\tag{ST1}\label{ST1}\\
&[\boldsymbol{e}_{ij}(a),\boldsymbol{e}_{kl}(b)]=0,
&&\text{ for }i\neq j\neq k\neq l\neq i,\tag{ST2}\label{ST2}
\end{align}
where $a,b\in R$ and $1\leqslant i,j,k,l\leqslant 2m$. 

By the defining relations, there is a homomorphism of Lie superalgebras 
$$\phi:\mathfrak{st}_{m|m}(S)\rightarrow\mathfrak{stp}_m(S\oplus S^{\mathrm{op}},\mathrm{ex})$$ 
such that
\begin{align*}
\phi(\boldsymbol{e}_{ij}(a)):&=\mathbf{t}_{ij}(a\oplus 0),
&\phi(\boldsymbol{e}_{i,m+j}(a)):&=\mathbf{f}_{ij}(a\oplus 0),\\
\phi(\boldsymbol{e}_{m+i,j}(a)):&=\mathbf{g}_{ij}(a\oplus 0),
&\phi(\boldsymbol{e}_{m+i,m+j}(a)):&=-\mathbf{t}_{ji}(0\oplus a),\\
\phi(\boldsymbol{e}_{i,m+i}(a)):&=[\mathbf{t}_{ij}(1\oplus0),\mathbf{f}_{ji}(a\oplus 0)],
&\phi(\boldsymbol{e}_{m+i,i}(a)):&=[\mathbf{g}_{ij}(a\oplus0),\mathbf{t}_{ji}(1\oplus0)],
\end{align*}
for $a\in S$ and $1\leqslant i\neq j\leqslant m$. It has the inverse $\tilde{\phi}:\mathfrak{stp}_m(S\oplus S^{\mathrm{op}},\mathrm{ex})\rightarrow\mathfrak{st}_{m|m}(S)$ given by
\begin{align*}
\tilde{\phi}(\mathbf{t}_{ij}(a\oplus b))&=\boldsymbol{e}_{ij}(a)-\boldsymbol{e}_{m+j,m+i}(b),\\
\tilde{\phi}(\mathbf{f}_{ij}(a\oplus b))&=\boldsymbol{e}_{i,m+j}(a)+\boldsymbol{e}_{j,m+i}(\rho(b)),\\
\tilde{\phi}(\mathbf{g}_{ij}(a\oplus b)&=\boldsymbol{e}_{m+i,j}(a)-\boldsymbol{e}_{m+j,i}(\rho(b)),
\end{align*}
for $a,b\in S$ and $1\leqslant i\neq j\leqslant m$. Hence, we obtain the desired isomorphism.
\end{proof}

\section{Kernels of The Central Extensions}
\label{sec:kernel}
This section is devoted to characterizing the kernel of the canonical homomorphism 
$$\psi:\mathfrak{stp}_m(R,{}^-)\rightarrow\mathfrak{p}_m(R,{}^-),$$
which is demonstrated to be a central extension in the previous section. The characterization of the kernel needs the notion of the first $\mathbb{Z}/2\mathbb{Z}$-graded dihedral homology $\fourIdx{}{+}{}{1}{\mathrm{HD}}(R,{}^-)$ of a unital associative superalgebra $(R,{}^-)$ with superinvolution.

The $\mathbb{Z}/2\mathbb{Z}$-graded dihedral homology of $(R,{}^-)$ is a natural $\mathbb{Z}/2\mathbb{Z}$-graded analogue of the dihedral homology of a unital associative algebra with anti-involution. It can be defined as the homology of the coinvariant Hochschild complex under the action of the dihedral group as Definition~5.2.7 in \cite{Loday1998}. For the use in this paper, we only describe its degree one term here:

Let $I$ be the $\Bbbk$-submodule of $R\otimes_{\Bbbk}R$ spanned by
\begin{equation*}
a\otimes b+(-1)^{|a||b|}b\otimes a, \quad
a\otimes b+\bar{a}\otimes\bar{b},\text{ and }
(-1)^{|a||c|}ab\otimes c+(-1)^{|b||a|}bc\otimes a+(-1)^{|c||b|}ca\otimes b,
\end{equation*}
for $a,b,c\in R$. Let $\langle R,R\rangle:=(R\otimes_{\Bbbk}R)/I$ and $\langle a,b\rangle=a\otimes b+I$, then \textit{the first $\mathbb{Z}/2\mathbb{Z}$-graded dihedral homology of $(R,{}^-)$} is 
\begin{equation}
\fourIdx{}{+}{}{1}{\mathrm{HD}}(R,{}^-):=\left\{\sum\limits_{i}\langle a_i,b_i\rangle\middle|\sum\limits_{i}\overline{[a_i,b_i]}=-\sum\limits_{i}[a_i,b_i] \right\}.
\label{eq:HD1}
\end{equation}

\begin{proposition}
\label{prop:stp_ct}
Let $m\geqslant3$ and $(R,{}^-)$ be a unital associative superalgebra with superinvolution. Then there is an isomorphism of $\Bbbk$-modules
\begin{equation*}
\ker\psi\cong\fourIdx{}{+}{}{1}{\mathrm{HD}}(R,{}^-\circ\rho),
\end{equation*}
where $\rho$ is the $\Bbbk$-linear map as in (\ref{eq:rho}) and ${}^-\circ\rho$, the composition of the superinvolution ${}^-$ with $\rho$, is also a superinvolution on $R$.
\end{proposition}

In order to prove this proposition, we need a few lemmas:

\begin{lemma}
\label{lem:ht_wd}
The elements $\mathbf{h}_{ij}(a,b)=[\mathbf{f}_{ij}(a),\mathbf{g}_{ji}(b)]\in\mathfrak{stp}_m(R,{}^-)$ satisfy
\begin{align}
\mathbf{h}_{1i}(a,b)-(-1)^{|a||b|}\mathbf{h}_{1i}(1,ba)
&=\mathbf{h}_{1k}(a,b)-(-1)^{|a||b|}\mathbf{h}_{1k}(1,ba),
\label{eq:h_wd}\\
\mathbf{h}_{i1}(1,a)+\mathbf{h}_{1j}(1,a)-\mathbf{h}_{ij}(1,a)
&=\mathbf{h}_{k1}(1,a)+\mathbf{h}_{1l}(1,a)-\mathbf{h}_{kl}(1,a),
\label{eq:t_wd}
\end{align}
for $a,b\in R$ and $2\leqslant i,j,k,l\leqslant m$ with $i\neq j$ and $k\neq l$.
\end{lemma}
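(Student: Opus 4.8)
\textbf{Proof proposal for Lemma~\ref{lem:ht_wd}.}

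The plan is to prove both identities by the same device used in Lemma~\ref{lem:munui_wd}: rewrite one of the generators inside $\mathbf{h}_{\bullet\bullet}$ as a bracket of two generators indexed by a third row, then expand via the super-Jacobi identity and collapse the resulting terms with the defining relations (\ref{STP03})--(\ref{STP12}). Since all indices in the statement are $\geqslant 2$, I have two ``spare'' rows (row $1$ and, when needed, an auxiliary row distinct from $1,i,j$); the hypothesis $m\geqslant 3$ guarantees such a spare index exists.

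For (\ref{eq:h_wd}), first I would establish the single-step identity $\mathbf{h}_{1i}(a,b)-(-1)^{|a||b|}\mathbf{h}_{1i}(1,ba)=\mathbf{h}_{1j}(a,b)-(-1)^{|a||b|}\mathbf{h}_{1j}(1,ba)$ for $i\neq j$ both $\geqslant 2$, from which the general case follows by transitivity. Write $\mathbf{f}_{1j}(a)=[\mathbf{t}_{1i}(a),\mathbf{f}_{ij}(1)]$ using (\ref{STP05}) (valid since $1,i,j$ are distinct), so that
\begin{equation*}
\mathbf{h}_{1j}(a,b)=[[\mathbf{t}_{1i}(a),\mathbf{f}_{ij}(1)],\mathbf{g}_{j1}(b)].
\end{equation*}
Expanding by the super-Jacobi identity gives $[\mathbf{t}_{1i}(a),[\mathbf{f}_{ij}(1),\mathbf{g}_{j1}(b)]]\pm[[\mathbf{t}_{1i}(a),\mathbf{g}_{j1}(b)],\mathbf{f}_{ij}(1)]$. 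The second bracket $[\mathbf{t}_{1i}(a),\mathbf{g}_{j1}(b)]$ vanishes by (\ref{STP08}) (the index pattern is $j\neq 1$, $i\neq 1$, with $i,j$ distinct from each other — one needs to check the hypothesis $l\neq k\neq j\neq i\neq k$ of (\ref{STP08}) is met with the roles $k=j$, $l=1$; alternatively relabel via (\ref{STP02}) to get a $\mathbf{g}$ with first index matching), so only the first term survives, and $[\mathbf{f}_{ij}(1),\mathbf{g}_{j1}(b)]=\mathbf{t}_{i1}(b)$ by (\ref{STP11}). Then $[\mathbf{t}_{1i}(a),\mathbf{t}_{i1}(b)]$ is, up to the sign $(-1)^{|a||b|}$ coming from the Jacobi expansion and the bilinearity, exactly $\mathbf{h}_{1i}(a,b)-(-1)^{|a||b|}\mathbf{h}_{1i}(1,ba)$: indeed $[\mathbf{t}_{1i}(a),\mathbf{t}_{i1}(b)]$ should be identified with $\mathbf{h}_{1i}$-type elements by first writing $\mathbf{t}_{i1}(b)=[\mathbf{f}_{ij}(b),\mathbf{g}_{j1}(1)]$ and $\mathbf{t}_{1i}(a)$ similarly, reducing everything to the $\mathbf{h}$'s and matching signs. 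I would carry out this bookkeeping carefully and record the precise sign at each use of Jacobi.

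For (\ref{eq:t_wd}), the strategy is parallel but now the natural auxiliary element is $\mathbf{t}_{i1}(1)$, $\mathbf{t}_{1j}(1)$ etc.; one reduces $\mathbf{h}_{ij}(1,a)=[\mathbf{f}_{ij}(1),\mathbf{g}_{ji}(a)]$ by writing $\mathbf{f}_{ij}(1)=[\mathbf{t}_{i1}(1),\mathbf{f}_{1j}(1)]$ and $\mathbf{g}_{ji}(a)=[\mathbf{g}_{j1}(a),\mathbf{t}_{1i}(1)]$ (both require only row $1$ as spare), expands the double bracket, and uses (\ref{STP06}), (\ref{STP08}), (\ref{STP12}) to kill the cross terms, leaving $\mathbf{h}_{i1}(1,a)+\mathbf{h}_{1j}(1,a)-\mathbf{h}_{ij}(1,a)$ expressed symmetrically in a way that does not depend on the choice of $i,j$ — more precisely, it should reduce to something intrinsic like $\mathbf{t}_{11}$-type data or a fixed combination, which then forces the stated equality with the $(k,l)$ version. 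Again it suffices to prove the one-index-change step (say $j\mapsto l$ with $i$ fixed, then $i\mapsto k$) and invoke transitivity.

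The main obstacle I anticipate is \emph{sign management}: every super-Jacobi expansion introduces Koszul signs $(-1)^{|x||y|}$ that must be tracked through the relabelings (\ref{STP01})--(\ref{STP02}) (which themselves involve $\rho$ and the bar), and the target identities have the specific signs $(-1)^{|a||b|}$ that must come out exactly right. A secondary nuisance is checking, at each step, that the index hypotheses of the relation being invoked (e.g. the four-index distinctness in (\ref{STP04}), (\ref{STP06}), (\ref{STP08}), (\ref{STP12})) are genuinely satisfied — with $m\geqslant 3$ there are just enough rows, but some steps may need a case split according to whether a particular auxiliary index can be chosen distinct from all of $1,i,j,k,l$, which is why the cleaner route is to prove the single-index-change version first (only three or four indices in play) and then chain. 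No new conceptual input beyond Definition~\ref{def:stp} and Lemma~\ref{lem:munui_wd} is required.
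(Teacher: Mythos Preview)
Your overall strategy (Jacobi-expand with an auxiliary row and chain single-index changes) is the right instinct, and it is essentially what the paper does. But there is a concrete error in your computation for (\ref{eq:h_wd}) that breaks the argument as written.

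You claim that in the expansion of $\mathbf{h}_{1j}(a,b)=[[\mathbf{t}_{1i}(a),\mathbf{f}_{ij}(1)],\mathbf{g}_{j1}(b)]$ the cross term $[\mathbf{t}_{1i}(a),\mathbf{g}_{j1}(b)]$ vanishes by (\ref{STP08}). It does not: the hypothesis of (\ref{STP08}) requires the first index of $\mathbf{t}$ to differ from the second index of $\mathbf{g}$, and here both are $1$. In fact (\ref{STP07}) applies (via $[\mathbf{g}_{j1}(b),\mathbf{t}_{1i}(a)]=\mathbf{g}_{ji}(ba)$), so the cross term contributes $\pm\mathbf{h}_{ij}(1,ba)$ after bracketing with $\mathbf{f}_{ij}(1)$. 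If you repair this and also expand $\mathbf{h}_{1j}(1,ba)$ the same way, the $\mathbf{h}_{ij}(1,ba)$ pieces do cancel in the difference, but what survives is
\[
\mathbf{h}_{1j}(a,b)-(-1)^{|a||b|}\mathbf{h}_{1j}(1,ba)=[\mathbf{t}_{1i}(a),\mathbf{t}_{i1}(b)]-(-1)^{|a||b|}[\mathbf{t}_{1i}(1),\mathbf{t}_{i1}(ba)],
\]
a $[\mathbf{t},\mathbf{t}]$-expression in the \emph{other} index $i$. So ``transitivity'' does not close the loop: you have converted an $\mathbf{h}$-independence question into a $[\mathbf{t},\mathbf{t}]$-independence question of the same shape, and you still owe a proof that the right-hand side is independent of $i$ (this can be done with one more Jacobi step using (\ref{STP03}), but you have not indicated it, and your sketch ``identify $[\mathbf{t}_{1i}(a),\mathbf{t}_{i1}(b)]$ with $\mathbf{h}_{1i}$-type elements'' goes in a circle).

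The paper avoids this detour by a different substitution: it writes $\mathbf{f}_{1k}(a)=-[\mathbf{f}_{1i}(a),\mathbf{t}_{ki}(1)]$ (keeping the $\mathbf{f}$ on row $1$) and expands both $\mathbf{h}_{1k}(a,b)$ and $\mathbf{h}_{1k}(1,ba)$. The cross term is again nonzero, but it is the \emph{same} element $(-1)^{|a|+|b|}[\mathbf{t}_{ik}(\bar a\bar b),\mathbf{t}_{ki}(1)]$ in both expansions, so it cancels in the difference and one lands directly on $\mathbf{h}_{1i}(a,b)-(-1)^{|a||b|}\mathbf{h}_{1i}(1,ba)$ with no residual independence question. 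For (\ref{eq:t_wd}) the paper likewise first records the identity $[\mathbf{t}_{ij}(a),\mathbf{t}_{ji}(b)]=\mathbf{h}_{ik}(a,b)-(-1)^{|a||b|}\mathbf{h}_{jk}(1,ba)$ (obtained from $\mathbf{t}_{ij}(a)=[\mathbf{f}_{ik}(a),\mathbf{g}_{kj}(1)]$) and reduces everything to that; your double-substitution plan for $\mathbf{h}_{ij}(1,a)$ is plausible but would need the same care with nonvanishing cross terms.
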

\begin{proof}
We first observe that (\ref{STP11}) implies
\begin{equation}
[\mathbf{t}_{ij}(a),\mathbf{t}_{ji}(b)]
=\mathbf{h}_{ik}(a,b)
-(-1)^{|a||b|}\mathbf{h}_{jk}(1,ba),
\label{eq:stp_tth}
\end{equation}
for $a,b\in R$ and pairwise distinct $i,j,k$. 

For (\ref{eq:h_wd}), we deduce from (\ref{eq:stp_tth}) that 
$$\mathbf{h}_{1i}(a,b)-(-1)^{|a||b|}\mathbf{h}_{1i}(1,ba)
=[\mathbf{t}_{1k}(a),\mathbf{t}_{k1}(b)]-(-1)^{|a||b|}[\mathbf{t}_{1k}(1),\mathbf{t}_{k1}(ba)]$$
for $a,b\in R$ and $2\leqslant i\neq k\leqslant m$, whose right-hand side is independent of $i$. Hence, (\ref{eq:h_wd}) follows.

For (\ref{eq:t_wd}), it obviously holds if $i=k$ since both sides of the equality are equal to $[\mathbf{t}_{1i}(1),\mathbf{t}_{i1}(a)]$. If $2\leqslant i\neq k\leqslant m$, the Jacobi identity implies that
\begin{align*}
[\mathbf{t}_{1i}(1),\mathbf{t}_{i1}(a)]
&=[[\mathbf{t}_{1k}(1),\mathbf{t}_{ki}(1)],\mathbf{t}_{i1}(a)]
=[\mathbf{t}_{ki}(1),\mathbf{t}_{ik}(a)]+[\mathbf{t}_{1k}(1),\mathbf{t}_{k1}(a)],
\end{align*}
which yields (\ref{eq:t_wd}) by applying (\ref{eq:stp_tth}). 
\end{proof}

Lemma~\ref{lem:ht_wd} ensures that
\begin{equation*}
\boldsymbol{\lambda}(a,b):=\mathbf{h}_{1i}(a,b)-(-1)^{|a||b|}\mathbf{h}_{1i}(1,ba)\in\mathfrak{stp}_m(R,{}^-)
\end{equation*}
is independent of $2\leqslant i\leqslant m$, and
\begin{equation*}
\boldsymbol{\mu}(a):=\mathbf{h}_{i1}(1,a)+\mathbf{h}_{1j}(1,a)-\mathbf{h}_{ij}(1,a)\in\mathfrak{stp}_m(R,{}^-)
\end{equation*}
is independent of $2\leqslant i\neq j\leqslant m$. Moreover, they satisfy the following properties:

\begin{lemma}
\label{lem:ht_rln}
For $a,b,c\in R$, we have
\begin{enumerate}
\item $(-1)^{|a||c|}\boldsymbol{\lambda}(ab,c)+(-1)^{|b||a|}\boldsymbol{\lambda}(bc,a)+(-1)^{|c||b|}\boldsymbol{\lambda}(ca,b)=0$,
\item $\boldsymbol{\lambda}(a,1)=\boldsymbol{\lambda}(1,b)=0$,
\item $\boldsymbol{\lambda}(a,b)=-(-1)^{|a||b|}\boldsymbol{\lambda}(b,a)$,
\item $\boldsymbol{\mu}(\bar{a})=-\boldsymbol{\mu}(\rho(a))$.
\end{enumerate}
\end{lemma}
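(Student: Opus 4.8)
The plan is to derive each of the four identities directly from the defining relations (STP00)–(STP12), the well-definedness established in Lemma~\ref{lem:munui_wd} and Lemma~\ref{lem:ht_wd}, and the freedom to pick convenient indices $i,j,k$. Since $m\geqslant3$, whenever I need a spare index distinct from $1$ and from any given index in $\{2,\ldots,m\}$ I can find one, and by (\ref{eq:h_def}) and (\ref{eq:t_def}) the elements $\boldsymbol{\lambda}(a,b)$ and $\boldsymbol{\mu}(a)$ do not depend on which spare index I use. I would carry out the four parts roughly in the order (ii), (iii), (iv), (i), since (i) is the most involved and the other three are short.

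For (ii): evaluate $\boldsymbol{\lambda}(a,1)=\mathbf{h}_{1i}(a,1)-\mathbf{h}_{1i}(1,a)$. Since $\mathbf{h}_{1i}(x,y)=[\mathbf{f}_{1i}(x),\mathbf{g}_{i1}(y)]$, and using (\ref{STP05})/(\ref{STP07}) to rewrite $\mathbf{f}_{1i}(a)$ and $\mathbf{g}_{i1}(a)$ as commutators with an identity-valued $\mathbf{t}$, both terms $\mathbf{h}_{1i}(a,1)$ and $\mathbf{h}_{1i}(1,a)$ reduce to the same expression after a Jacobi manipulation, giving $0$; similarly $\boldsymbol{\lambda}(1,b)=\mathbf{h}_{1i}(1,b)-\mathbf{h}_{1i}(1,b)=0$ is immediate from the definition. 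For (iii): using (\ref{STP01}), (\ref{STP02}) and the fact (already recorded in the excerpt before Proposition~\ref{prop:stp_tridec}) that $\boldsymbol{f}_i(\bar a)=\boldsymbol{f}_i(\rho(a))$ and $\boldsymbol{g}_i(\bar a)=-\boldsymbol{g}_i(\rho(a))$, rewrite $\mathbf{h}_{1i}(a,b)=[\mathbf{f}_{1i}(a),\mathbf{g}_{i1}(b)]$ and use super-antisymmetry of the bracket together with the relations $\mathbf{f}_{1i}(a)=\mathbf{f}_{i1}(\rho(\bar a))$, $\mathbf{g}_{i1}(b)=-\mathbf{g}_{1i}(\rho(\bar b))$ to relate $\mathbf{h}_{1i}(a,b)$ to $\mathbf{h}_{i1}$-type terms, then combine with (\ref{eq:h_def}) and part (ii) to collapse to $-(-1)^{|a||b|}\boldsymbol{\lambda}(b,a)$. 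For (iv): by (\ref{eq:t_def}), $\boldsymbol{\mu}(a)=\mathbf{h}_{i1}(1,a)+\mathbf{h}_{1j}(1,a)-\mathbf{h}_{ij}(1,a)$; applying $a\mapsto\bar a$ and using (\ref{STP01}), (\ref{STP02}) to convert each $\mathbf{h}_{pq}(1,\bar a)=[\mathbf{f}_{pq}(1),\mathbf{g}_{qp}(\bar a)]$ into an expression in the $\mathbf{h}_{qp}$ with argument $\rho(a)$, up to sign; careful bookkeeping of the indices and the fact that $\boldsymbol{\mu}$ is index-independent yields $\boldsymbol{\mu}(\bar a)=-\boldsymbol{\mu}(\rho(a))$.

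For (i), the $2$-cocycle-type identity: fix a spare index $i\in\{2,\ldots,m\}$ and expand all three terms $\boldsymbol{\lambda}(ab,c),\boldsymbol{\lambda}(bc,a),\boldsymbol{\lambda}(ca,b)$ using (\ref{eq:h_def}) with this same $i$, writing each $\mathbf{h}_{1i}$ as $[\mathbf{f}_{1i}(\cdot),\mathbf{g}_{i1}(\cdot)]$. Then use (\ref{STP05})/(\ref{STP11}) to split, e.g., $\mathbf{f}_{1i}(ab)=[\mathbf{t}_{1i'}(a),\mathbf{f}_{i'i}(b)]$-type rewrites (choosing a second spare index $i'$), and repeatedly apply the super Jacobi identity together with (\ref{STP04})–(\ref{STP12}) to push everything into a common normal form; the six terms (three positive, three "correction" terms $-(-1)^{\cdots}\mathbf{h}_{1i}(1,\cdot)$) should cancel in pairs after resorting. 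Alternatively, and more cleanly, I would use (\ref{eq:stp_tth}) — the identity $[\boldsymbol{t}_{ij}(a),\boldsymbol{t}_{ji}(b)]=\boldsymbol{h}_{ik}(a,b)-(-1)^{|a||b|}\boldsymbol{h}_{ik}(1,ba)$ — recognizing that $\boldsymbol{\lambda}(a,b)=[\mathbf{t}_{1i}(a),\mathbf{t}_{i1}(b)]$ for any spare $i$; then (i) becomes the statement that $(-1)^{|a||c|}[\mathbf{t}_{1i}(ab),\mathbf{t}_{i1}(c)]+(\text{cyclic})=0$, which follows by writing $\mathbf{t}_{1i}(ab)=[\mathbf{t}_{1j}(a),\mathbf{t}_{ji}(b)]$ and expanding via Jacobi and (\ref{STP03}), (\ref{STP04}). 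The main obstacle is purely organizational: keeping the Koszul signs consistent through the nested commutator expansions in part (i), and verifying that the index choices are legitimate (requiring $m\geqslant3$, and for the double-spare-index manoeuvre effectively $m\geqslant3$ as well since $1,i,j$ must be distinct). No genuinely new idea beyond the relations and Jacobi is needed.
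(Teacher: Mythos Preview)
Your overall strategy for (i) --- rewrite $\boldsymbol{\lambda}$ in terms of $[\mathbf{t}_{1j}(\cdot),\mathbf{t}_{j1}(\cdot)]$ and then use a Jacobi-derived three-term identity among such brackets --- is exactly the paper's route, but your ``cleaner alternative'' rests on a false identity. The displayed formula (\ref{eq:stp_tth}) has a typo: the correct relation is
\[
[\mathbf{t}_{ij}(a),\mathbf{t}_{ji}(b)]=\mathbf{h}_{ik}(a,b)-(-1)^{|a||b|}\mathbf{h}_{jk}(1,ba),
\]
with second subscript $jk$, not $ik$ (this is how the paper itself uses it two lines later, and how (\ref{eq:h_lmd}) is obtained). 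Hence $\boldsymbol{\lambda}(a,b)\neq[\mathbf{t}_{1i}(a),\mathbf{t}_{i1}(b)]$ in general; rather
\[
\boldsymbol{\lambda}(a,b)=[\mathbf{t}_{1j}(a),\mathbf{t}_{j1}(b)]-(-1)^{|a||b|}[\mathbf{t}_{1j}(1),\mathbf{t}_{j1}(ba)].
\]
The correction term is precisely what makes (i) nontrivial: after expanding $[\mathbf{t}_{1j}(ab),\mathbf{t}_{j1}(c)]$ via Jacobi you pick up brackets $[\mathbf{t}_{ij}(\cdot),\mathbf{t}_{ji}(\cdot)]$ with $i\neq 1$, which are \emph{not} $\boldsymbol{\lambda}$'s, and one must combine (\ref{eq:h_lmd}) with the three-term identity (\ref{eq:lmd_Jac}) and a second application of Jacobi to close up. So the obstacle in (i) is not merely organizational sign-bookkeeping; the missing ingredient is (\ref{eq:h_lmd}).

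Your proposed order is also backwards relative to what actually works. The paper proves (i) first and then obtains (ii) and (iii) as immediate specializations ($b=c=1$ gives $2\boldsymbol{\lambda}(a,1)+\boldsymbol{\lambda}(1,a)=0$, hence $\boldsymbol{\lambda}(a,1)=0$; $c=1$ gives (iii)). Your direct arguments for (ii) and (iii) do not go through as sketched: for (iii), applying (\ref{STP01})--(\ref{STP02}) to $\mathbf{h}_{1i}(a,b)$ yields $-\mathbf{h}_{i1}(\rho(\bar a),\rho(\bar b))$, which relates $\mathbf{h}_{1i}$ to $\mathbf{h}_{i1}$ rather than to $\mathbf{h}_{1i}(b,a)$, so you never reach $-(-1)^{|a||b|}\boldsymbol{\lambda}(b,a)$ this way; and for (ii), showing $\mathbf{h}_{1i}(a,1)=\mathbf{h}_{1i}(1,a)$ directly is essentially as hard as (i). Part (iv) is fine and matches the paper's one-line derivation from $\mathbf{h}_{ij}(a,b)=-\mathbf{h}_{ji}(\rho(\bar a),\rho(\bar b))$.
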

\begin{proof}
(i) We observe from (\ref{eq:stp_tth}) that
\begin{equation*}
\boldsymbol{\lambda}(ab,c)=[\mathbf{t}_{1j}(ab),\mathbf{t}_{j1}(c)]
-(-1)^{|a||c|+|b||c|}[\mathbf{t}_{1j}(1),\mathbf{t}_{j1}(cab)],
\end{equation*}
for $a,b,c\in R$ and $j\neq 1$. On one hand, $\mathbf{t}_{j1}(cab)=[\mathbf{t}_{ji}(ca),\mathbf{t}_{i1}(b)]$ for $2\leqslant i\neq j\leqslant m$, then the Jacobi identity implies that
$$[\mathbf{t}_{1j}(1),\mathbf{t}_{j1}(cba)]
=[\mathbf{t}_{1i}(ca),\mathbf{t}_{i1}(b)]-[\mathbf{t}_{ji}(ca),\mathbf{t}_{ij}(b)].$$
It follows that
\begin{equation}
\boldsymbol{\lambda}(ab,c)
=[\mathbf{t}_{1j}(ab),\mathbf{t}_{j1}(c)]
-(-1)^{|a||c|+|b||c|}[\mathbf{t}_{1i}(ca),\mathbf{t}_{i1}(b)]
+(-1)^{|a||c|+|b||c|}[\mathbf{t}_{ji}(ca),\mathbf{t}_{ij}(b)].
\label{eq:lambdatt1}
\end{equation}

On the other hand, $\mathbf{t}_{j1}(cab)=[\mathbf{t}_{ji}(c),\mathbf{t}_{i1}(ab)]$, then
$$[\mathbf{t}_{1j}(1),\mathbf{t}_{j1}(cab)]
=[\mathbf{t}_{1i}(c),\mathbf{t}_{i1}(ab)]
-[\mathbf{t}_{ji}(c),\mathbf{t}_{ij}(ab)],$$
which yields that
\begin{equation}
\boldsymbol{\lambda}(ab,c)
=[\mathbf{t}_{1j}(ab),\mathbf{t}_{j1}(c)]
+[\mathbf{t}_{i1}(ab),\mathbf{t}_{1i}(c)]
-[\mathbf{t}_{ij}(ab),\mathbf{t}_{ji}(c)].
\label{eq:lambdatt2}
\end{equation}

Note that (\ref{STP03}) and the Jacobi identity imply that
\begin{equation*}
(-1)^{|a||c|}[\mathbf{t}_{ij}(ab),\mathbf{t}_{ji}(c)]
+(-1)^{|a||b|}[\mathbf{t}_{ki}(bc),\mathbf{t}_{ik}(a)]
+(-1)^{|b||c|}[\mathbf{t}_{jk}(ca),\mathbf{t}_{kj}(b)]
=0,
\end{equation*}
for $a,b,c\in R$ and pairwise distinct $i,j,k$, we obtain (i) by applying (\ref{eq:lambdatt1}) to $\boldsymbol{\lambda}(ab,c)$, $\boldsymbol{\lambda}(bc,a)$ and applying (\ref{eq:lambdatt2}) to $\boldsymbol{\lambda}(ca,b)$. 
\medskip

(ii) $\boldsymbol{\lambda}(1,b)=0$ is obvious. Taking $b=c=1$ in (i), we obtain
$$\boldsymbol{\lambda}(a,1)+\boldsymbol{\lambda}(1,a)+\boldsymbol{\lambda}(a,1)=0$$
which implies $\boldsymbol{\lambda}(a,1)=0$ since $\frac{1}{2}\in\Bbbk$.
\medskip

(iii) follows from (i) by taking $c=1$.
\medskip

(iv) follows from the equality $\mathbf{h}_{ij}(a,b)=-\mathbf{h}_{ji}(\rho(\bar{a}),\rho(\bar{b}))$.
\end{proof}

\begin{lemma}
\label{lem:diag}
Every element $x\in\mathfrak{stp}_{m}^0(R,{}^-)$ can be written as
\begin{equation}
x=\sum\limits_{i\in I_x}\boldsymbol{\lambda}(a_i,b_i)+\boldsymbol{\mu}(c)+\sum\limits_{j=2}^m\mathbf{h}_{1j}(1,d_j),
\label{eq:diag_ele}
\end{equation}
where $I_x$ is a finite index set, $a_i, b_i, c, d_j\in R$ for $i\in I_x$. Moreover,
\begin{equation}
\boldsymbol{\mu}([a,b])=\boldsymbol{\lambda}(a,b)+\boldsymbol{\lambda}(\rho(\bar{a}),\rho(\bar{b})),
\label{eq:t_h}
\end{equation}
for $a,b\in R$.
\end{lemma}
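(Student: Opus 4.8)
\textbf{Proof proposal for Lemma~\ref{lem:diag}.}

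The plan is to prove the two assertions separately, starting with the spanning statement~(\ref{eq:diag_ele}). By definition $\mathfrak{stp}_m^0(R,{}^-)$ is spanned by the elements $\mathbf{h}_{ij}(a,b)$ with $1\leqslant i\neq j\leqslant m$, so it suffices to show each such generator can be rewritten in the asserted form. First I would dispose of the case $i=1$: by the very definition~(\ref{eq:h_def}) of $\boldsymbol{\lambda}$ we have $\mathbf{h}_{1j}(a,b)=\boldsymbol{\lambda}(a,b)+(-1)^{|a||b|}\mathbf{h}_{1j}(1,ba)$, which already has the required shape (a $\boldsymbol{\lambda}$-term plus an $\mathbf{h}_{1j}(1,\cdot)$-term, with no $\boldsymbol{\mu}$-term needed). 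Symmetrically, using $\mathbf{h}_{ij}(a,b)=-\mathbf{h}_{ji}(\rho(\bar a),\rho(\bar b))$ (as in the proof of Lemma~\ref{lem:ht_rln}(iv)), the case $j=1$ reduces to the case $i=1$. For the remaining case $2\leqslant i\neq j\leqslant m$, the definition~(\ref{eq:t_def}) of $\boldsymbol{\mu}$ gives $\mathbf{h}_{ij}(1,d)=\mathbf{h}_{i1}(1,d)+\mathbf{h}_{1j}(1,d)-\boldsymbol{\mu}(d)$, and combining this with the $i=1$ reduction applied to the general $\mathbf{h}_{ij}(a,b)$ — first peel off a $\boldsymbol{\lambda}$ to reduce to $\mathbf{h}_{ij}(1,ba)$, then use the $\boldsymbol{\mu}$-identity, then reduce the $\mathbf{h}_{i1}(1,\cdot)$ term via case $j=1$ — collects everything into $\boldsymbol{\lambda}$-terms, a single $\boldsymbol{\mu}$-term, and $\mathbf{h}_{1j}(1,\cdot)$-terms. (Multiple $\boldsymbol{\mu}$-terms collapse to one since $\boldsymbol{\mu}$ is $\Bbbk$-linear by~(\ref{STP00}).)

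For the identity~(\ref{eq:t_h}), the plan is to compute $[\mathbf{t}_{1i}(a),\mathbf{t}_{i1}(b)]$ in two ways. On one hand, the relation~(\ref{eq:h_lmd}) established inside the proof of Lemma~\ref{lem:ht_rln} gives $[\mathbf{t}_{1i}(a),\mathbf{t}_{i1}(b)]=\boldsymbol{\lambda}(a,b)+(-1)^{|a||b|}[\mathbf{t}_{1i}(1),\mathbf{t}_{i1}(ba)]$, and since $[\mathbf{t}_{1i}(1),\mathbf{t}_{i1}(ba)]=\boldsymbol{\mu}(ba)$ by~(\ref{eq:stp_tth}) together with~(\ref{eq:t_def}) (both sides equal $\mathbf{h}_{1k}(1,ba)-\mathbf{h}_{ik}(1,ba)$, and then matching against the definition of $\boldsymbol{\mu}$), we get one expression. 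On the other hand, I would use the superinvolution relations~(\ref{STP01})--(\ref{STP02}) — equivalently the identity $\mathbf{h}_{ij}(a,b)=-\mathbf{h}_{ji}(\rho(\bar a),\rho(\bar b))$ — to rewrite $[\mathbf{t}_{1i}(a),\mathbf{t}_{i1}(b)]$ in terms of $\boldsymbol{\lambda}(\rho(\bar a),\rho(\bar b))$ and $\boldsymbol{\mu}$ of the appropriate argument. Subtracting the two expressions and invoking $\boldsymbol{\mu}(ab)-\boldsymbol{\mu}(ba)=\boldsymbol{\mu}([a,b])$ (linearity of $\boldsymbol{\mu}$) isolates $\boldsymbol{\mu}([a,b])=\boldsymbol{\lambda}(a,b)+\boldsymbol{\lambda}(\rho(\bar a),\rho(\bar b))$, after using Lemma~\ref{lem:ht_rln}(iii) to handle the skew-symmetry bookkeeping.

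The main obstacle I anticipate is the second part: keeping the sign factors $(-1)^{|a||b|}$ straight while passing between $\mathbf{h}_{ij}(a,b)$ and $\mathbf{h}_{ji}(\rho(\bar a),\rho(\bar b))$, since the twist by $\rho$ interacts with parities and the bracket $[\mathbf{t}_{1i}(a),\mathbf{t}_{i1}(b)]$ is itself not skew in $(a,b)$. A cleaner route, which I would try first, is to evaluate $[\mathbf{t}_{i1}(b),\mathbf{t}_{1i}(a)]$ directly by the super-antisymmetry of the bracket and compare with $[\mathbf{t}_{1i}(a),\mathbf{t}_{i1}(b)]$: the difference is $\boldsymbol{\mu}(ab)+(-1)^{|a||b|}\boldsymbol{\mu}(ba)$ up to a sign, which combined with the image $\psi$ of both sides (a diagonal matrix whose trace is $[a,b]$, landing in $R_{(-)}$-data after the $\rho$-twist) forces the claimed identity. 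Everything else is a routine application of~(\ref{STP03}), the Jacobi identity, and the already-proved Lemmas~\ref{lem:ht_wd} and~\ref{lem:ht_rln}.
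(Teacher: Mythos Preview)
There are two genuine gaps. In the first part, for $2\leqslant i\neq j\leqslant m$ you plan to ``peel off a $\boldsymbol{\lambda}$'' to pass from $\mathbf{h}_{ij}(a,b)$ to $\mathbf{h}_{ij}(1,ba)$, i.e.\ you are tacitly using $\mathbf{h}_{ij}(a,b)-(-1)^{|a||b|}\mathbf{h}_{ij}(1,ba)=\boldsymbol{\lambda}(a,b)$. But $\boldsymbol{\lambda}(a,b)$ is by definition $\mathbf{h}_{1k}(a,b)-(-1)^{|a||b|}\mathbf{h}_{1k}(1,ba)$ with first index $1$, and the analogue with first index $i\geqslant 2$ is \emph{not} the same element: applying $\psi$ gives $t_{11}([a,b])$ in the first case and $t_{ii}([a,b])$ in the second, so they differ whenever $[a,b]\neq 0$. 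The paper avoids this by writing $\mathbf{f}_{ij}(a)=[\mathbf{t}_{i1}(1),\mathbf{f}_{1j}(a)]$ and expanding $\mathbf{h}_{ij}(a,b)=[\mathbf{f}_{ij}(a),\mathbf{g}_{ji}(b)]$ via the Jacobi identity, which directly yields $\mathbf{h}_{ij}(a,b)=\mathbf{h}_{1j}(a,b)-\boldsymbol{\mu}(ab)+\mathbf{h}_{i1}(1,ab)$; no reduction of $\mathbf{h}_{ij}(a,b)$ to $\mathbf{h}_{ij}(1,ba)$ is needed.

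In the second part your key step is the claim $[\mathbf{t}_{1i}(1),\mathbf{t}_{i1}(ba)]=\boldsymbol{\mu}(ba)$. This is false: by (\ref{eq:stp_tth}) the left side equals $\mathbf{h}_{1k}(1,ba)-\mathbf{h}_{ik}(1,ba)$, whereas by (\ref{eq:t_def}) one has $\boldsymbol{\mu}(ba)=\mathbf{h}_{i1}(1,ba)+\mathbf{h}_{1k}(1,ba)-\mathbf{h}_{ik}(1,ba)$, so they differ by $\mathbf{h}_{i1}(1,ba)$ (and again $\psi$ detects the discrepancy: the left side maps to $t_{11}(ba)-t_{ii}(ba)$, the right side to a multiple of $t_{11}$ alone). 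Your fallback of comparing images under $\psi$ cannot rescue this either, since $\psi$ has a nontrivial kernel and therefore does not force identities in $\mathfrak{stp}_m(R,{}^-)$. The paper instead reads off an explicit formula for $\boldsymbol{\mu}(ab)$ from the Jacobi computation in the first part, transforms it into a formula for $\boldsymbol{\mu}(ba)$ via Lemma~\ref{lem:ht_rln}(iv), and subtracts to obtain~(\ref{eq:t_h}).
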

\begin{proof}
Recall that $\mathfrak{stp}_{m}^0(R,{}^-)$ is spanned by $\mathbf{h}_{ij}(a,b)$ for $a,b\in R$ and $1\leqslant i\neq j\leqslant m$. It suffices to show that $\mathbf{h}_{ij}(a,b)$ can be written in the form of (\ref{eq:diag_ele}).

If $i=1$ and $2\leqslant j\leqslant m$, we have
$$-\mathbf{h}_{j1}(\rho(\bar{a}),\rho(\bar{b}))=\mathbf{h}_{1j}(a,b)=\boldsymbol{\lambda}(a,b)+(-1)^{|a||b|}\mathbf{h}_{1j}(1,ba).$$
For $2\leqslant i\neq j\leqslant m$, since $\mathbf{f}_{ij}(a)=[\mathbf{t}_{i1}(1),\mathbf{f}_{1j}(a)]$, the Jacobi identity implies that 
$$\mathbf{h}_{ij}(a,b)=[\mathbf{f}_{ij}(a),\mathbf{g}_{ji}(b)]=[\mathbf{f}_{1j}(a),\mathbf{g}_{j1}(b)]+[\mathbf{t}_{i1}(1),\mathbf{t}_{1i}(ab)]=\mathbf{h}_{1j}(a,b)+[\mathbf{t}_{i1}(1),\mathbf{t}_{1i}(ab)].$$
Combining (\ref{eq:stp_tth}), we obtain
\begin{equation}
\mathbf{h}_{ij}(a,b)=\mathbf{h}_{1j}(a,b)+\mathbf{h}_{ij}(1,ab)-\mathbf{h}_{1j}(1,ab)=\mathbf{h}_{1j}(a,b)-\boldsymbol{\mu}(ab)+\mathbf{h}_{i1}(1,ab),
\label{eq:mu_h}
\end{equation}
which is of the form (\ref{eq:diag_ele}) since $\mathbf{h}_{1j}(a,b)$ and $\mathbf{h}_{i1}(1,ab)$ are also of this form.
\medskip

Now, we prove (\ref{eq:t_h}). Fix $2\leqslant i\neq j\leqslant m$, it follows from (\ref{eq:mu_h}) and Lemma~\ref{lem:ht_rln} that
\begin{align*}
\boldsymbol{\mu}(ab)&=\mathbf{h}_{1j}(a,b)+\mathbf{h}_{i1}(1,ab)-\mathbf{h}_{ij}(a,b),\\
-(-1)^{|a||b|}\boldsymbol{\mu}(ba)
&=\boldsymbol{\mu}(\rho(\bar{a})\rho(\bar{b}))
=\mathbf{h}_{1i}(\rho(\bar{a}),\rho(\bar{b}))+\mathbf{h}_{j1}(1,\rho(\bar{a})\rho(\bar{b}))-\mathbf{h}_{ji}(\rho(\bar{a}),\rho(\bar{b}))).
\end{align*}
On the other hand, (\ref{STP01}) and (\ref{STP02}) yield
$$\mathbf{h}_{ij}(a,b)=[\mathbf{f}_{ij}(a),\mathbf{g}_{ji}(b)]
=-[\mathbf{f}_{ji}(\rho(\bar{a})),\mathbf{g}_{ij}(\rho(\bar{b}))]
=-\mathbf{h}_{ji}(\rho(\bar{a}),\rho(\bar{b})).$$
Hence,
\begin{align*}
\boldsymbol{\mu}(ab)&=\mathbf{h}_{1j}(a,b)-(-1)^{|a||b|}\mathbf{h}_{1i}(1,\rho(\bar{a})\rho(\bar{b}))-\mathbf{h}_{ij}(a,b),\\
-(-1)^{|a||b|}\boldsymbol{\mu}(ba)
&=\mathbf{h}_{1i}(\rho(\bar{a}),\rho(\bar{b}))-(-1)^{|a||b|}\mathbf{h}_{1j}(1,ba)+\mathbf{h}_{ij}(a,b)).
\end{align*}
Therefore, $\boldsymbol{\mu}([a,b])=\boldsymbol{\mu}(ab)-(-1)^{|a||b|}\boldsymbol{\mu}(ba)=\boldsymbol{\lambda}(a,b)+\boldsymbol{\lambda}(\rho(\bar{a}),\rho(\bar{b}))$.
\end{proof}

Now, we may proceed to prove Proposition~\ref{prop:stp_ct}.

\begin{proof}[Proof of Proposition~\ref{prop:stp_ct}]
We shall explicitly construct an isomorphism from $\fourIdx{}{+}{}{1}{\mathrm{HD}}(R,{}^-\circ\rho)$ to $\ker\psi$. Recall from (\ref{eq:HD1}) that $\fourIdx{}{+}{}{1}{\mathrm{HD}}(R,{}^-\circ\rho)$ is a $\Bbbk$-submodule of $\langle R,R\rangle$ that is the quotient $(R\otimes_{\Bbbk}R)/I$, where $I$ is the $\Bbbk$-submodule of $R\otimes_{\Bbbk}R$ spanned by 
\begin{eqnarray*}
&a\otimes b+(-1)^{|a||b|}b\otimes a,&\\
&a\otimes b+\rho(\bar{a})\otimes\rho(\bar{b}),\text{ and }&\\
&(-1)^{|a||c|}ab\otimes c+(-1)^{|b||a|}bc\otimes a+(-1)^{|c||b|}ca\otimes b&
\end{eqnarray*} 
for $a,b,c\in R$. By Lemmas~\ref{lem:ht_rln} and~\ref{lem:diag}, there is a well-defined $\Bbbk$-linear map:
\begin{align*}
\eta:\langle R,R\rangle&\rightarrow\mathfrak{stp}_m(R,{}^-),\\
\langle a,b\rangle&\mapsto\boldsymbol{\lambda}(a,b)-\frac{1}{2}\boldsymbol{\mu}([a,b])
=\frac{1}{2}(\boldsymbol{\lambda}(a,b)-\boldsymbol{\lambda}(\rho(\bar{a}),\rho(\bar{b}))).
\end{align*}
Our task is to prove the restriction of $\eta$ to $\fourIdx{}{+}{}{1}{\mathrm{HD}}(R,{}^-\circ\rho)$ is an isomorphism onto $\ker\psi$.
\medskip

Firstly, we verify that $\eta\left(\fourIdx{}{+}{}{1}{\mathrm{HD}}(R,{}^-\circ\rho)\right)\subseteq\ker\psi$. For $\sum_i\langle a_i,b_i\rangle\in\fourIdx{}{+}{}{1}{\mathrm{HD}}(R,{}^-\circ\rho)$, we have
$$\psi(\eta(\textstyle{\sum}_i\langle a_i,b_i\rangle))
=\frac{1}{2}\textstyle{\sum}_i\psi(\boldsymbol{\lambda}(a_i,b_i)-\boldsymbol{\lambda}(\rho(\bar{a}_i),\rho(\bar{b}_i)))
=\frac{1}{2}\textstyle{\sum}_ie_{11}([a_i,b_i]-[\rho(\bar{a}_i),\rho(\bar{b}_i)])
=0,$$
since $\sum_i\overline{[a_i,b_i]}=-\sum_i[\rho(a_i),\rho(b_i)]$. 
\medskip

Secondly, we show that $\eta$ maps $\fourIdx{}{+}{}{1}{\mathrm{HD}}(R,{}^-\circ\rho)$ onto $\ker\psi$. Let $\boldsymbol{x}\in\ker{\psi}$.  By Proposition~\ref{prop:stp_ce}, $\boldsymbol{x}\in\mathfrak{stp}_m^0(R,{}^-)$ and we may write as in Lemma~\ref{lem:diag} that
$$\boldsymbol{x}=\sum_{i\in I_{\boldsymbol{x}}}\boldsymbol{\lambda}(a_i,b_i)+\boldsymbol{\mu}(c)+\sum\limits_{j=2}^m\mathbf{h}_{1j}(1,d_j),$$
where $a_i,b_i,c,d_j\in R$ for $i\in I_{\boldsymbol{x}}$ (a finite set). Then $\psi(\boldsymbol{x})=0$ implies that $d_j=0$ for $j=2,\ldots,m$ and
$$\sum\limits_{i\in I_{\boldsymbol{x}}}[a_i,b_i]=-c_{(-)}\in R_{(-)}.$$
Hence, $\sum_i\langle a_i,b_i\rangle\in\fourIdx{}{+}{}{1}{\mathrm{HD}}(R,{}^-\circ\rho)$. Moreover, it is known from Lemma~\ref{lem:ht_rln} that $\boldsymbol{\mu}(\bar{a})=-\boldsymbol{\mu}(\rho(a))$ for  $a\in R$. Hence,
$$\boldsymbol{\mu}(c)=\frac{1}{2}\boldsymbol{\mu}(c_{(-)})=-\frac{1}{2}\sum_{i\in I_{\boldsymbol{x}}}\boldsymbol{\mu}([a_i,b_i]),$$
which yields that
$$\boldsymbol{x}=\sum_{i\in I_{\boldsymbol{x}}}(\boldsymbol{\lambda}(a_i,b_i)-\frac{1}{2}\boldsymbol{\mu}([a_i,b_i]))=\sum_{i\in I_x}\eta(\langle a_i,b_i\rangle).$$
\medskip

Finally, we prove the injectivity of $\eta$. It is known that $\eta$ is injective if $\phi\circ\eta$ is injective for another map $\phi$. Hence, the trick of the proof is to define a Lie superalgebra structure on the $\Bbbk$-module $\mathfrak{p}_m(R,{}^-)\oplus\langle R,R\rangle$ and a homomorphism $\phi:\mathfrak{stp}_m(R,{}^-)\rightarrow\mathfrak{p}_m(R,{}^-)\oplus\langle R,R\rangle$ such that $\phi\circ\eta$ is injective.

In order to define a Lie superalgebra structure on $\mathfrak{p}_m(R,{}^-)\oplus\langle R,R\rangle$, we employ a $2$-cocycle on $\mathfrak{p}_m(R,{}^-)$ with values in $\langle R,R\rangle$. Indeed, it is known from \cite{ChenSun2015} that there is a $2$-cocycle $\alpha:\mathfrak{gl}_{m|m}(R)\times\mathfrak{gl}_{m|m}(R)\rightarrow\langle R, R\rangle$ defined by
$$\alpha(e_{ij}(a),e_{kl}(b))=\delta_{jk}\delta_{il}(-1)^{|i|(|i|+|a|+|b|)}\langle a,b\rangle,$$
for $a,b\in R$ and $1\leqslant i,j,k,l\leqslant 2m$. Then the restriction of $\alpha$ to $\mathfrak{p}_m(R,{}^-)\times\mathfrak{p}_m(R,{}^-)$ is a $2$-cocycle on $\mathfrak{p}_m(R,{}^-)\subseteq\mathfrak{gl}_{m|m}(R)$. Hence, $\mathfrak{p}_m(R,{}^-)\oplus\langle R,R\rangle$ is a Lie superalgebra under:
$$[x\oplus c, y\oplus c']=[x,y]\oplus \alpha(x,y),\quad x,y\in\mathfrak{p}_m(R,{}^-)\text{ and }c,c'\in\langle R,R\rangle.$$

The existence of $\phi:\mathfrak{stp}_m(R,{}^-)\rightarrow\mathfrak{p}_m(R,{}^-)\oplus\langle R,R\rangle$ follows from a case-by-case verification that ${t}_{ij}(a)\oplus0$, ${f}_{ij}(a)\oplus0$ and ${g}_{ij}(a)\oplus0\in\mathfrak{p}_m(R,{}^-)\oplus\langle R,R\rangle$ satisfy all relations (\ref{STP00})-(\ref{STP12}). Such a homomorphism $\phi$ satisfies
$$\phi(\mathbf{t}_{ij}(a))={t}_{ij}(a)\oplus0,
\quad\phi(\mathbf{f}_{ij}(a))={f}_{ij}(a)\oplus0,
\quad\phi(\mathbf{g}_{ij}(a))={g}_{ij}(a)\oplus0,$$
for $a\in R$ and $1\leqslant i\neq j\leqslant m$. In particular,
$$
\phi(\mathbf{h}_{ij}(a,b))
=\phi([\mathbf{f}_{ij}(a),\mathbf{g}_{ji}(b)])=[{f}_{ij}(a)\oplus0,{g}_{ji}(b)\oplus0]
=({t}_{ii}(ab)-{t}_{jj}(\rho(\bar{a})\rho(\bar{b})))\oplus2\langle a,b\rangle.$$
Now, $\langle1,a\rangle=0$ since $\langle1,a\rangle=-\langle a,1\rangle$ and $\langle a,1\rangle+\langle a,1\rangle+\langle 1,a\rangle=0$. Hence, 
$$\phi(\boldsymbol{\lambda}(a,b))=\phi(\mathbf{h}_{1i}(a,b))-(-1)^{|a||b|}\phi(\mathbf{h}_{1i}(1,ba))={t}_{11}([a,b])\oplus2\langle a,b\rangle,$$
which further implies that 
$$\phi(\eta(\langle a,b\rangle))=\frac{1}{2}(\phi(\boldsymbol{\lambda}(a,b))-\phi(\boldsymbol{\lambda}(\rho(\bar{a}),\rho(\bar{b}))))\\
=\frac{1}{2}{t}_{11}([a,b]-[\rho(\bar{a}),\rho(\bar{b})])\oplus2\langle a,b\rangle.$$
Therefore, $\phi\circ\eta$ is injective, so is $\eta$.
\end{proof}

\section{Universality of The Central Extensions}
\label{sec:uce}
 
In Section~\ref{sec:stp}, we created the central extension $\psi:\mathfrak{stp}_m(R,{}^-)\rightarrow\mathfrak{p}_m(R,{}^-)$, whose kernel was identified with $\fourIdx{}{+}{}{1}{\mathrm{H}}(R,{}^-\circ\rho)$ as in Section~\ref{sec:kernel}. We are now in the position to demonstrate the universality of this central extension for $m\geqslant5$. The most crucial part of the proof is the existence of a homomorphism from $\mathfrak{stp}_m(R,{}^-)$  
to an arbitrary central extension $\mathfrak{E}$ of $\mathfrak{p}_m(R,{}^-)$. These homomorphisms will be constructed via finding certain elements of $\mathfrak{E}$ such that the defining relations of $\mathfrak{stp}_m(R,{}^-)$ are satisfied.

Let $\varphi:\mathfrak{E}\rightarrow\mathfrak{p}_m(R,{}^-)$ be an arbitrary central extension of $\mathfrak{p}_m(R,{}^-)$ with $m\geqslant3$. For $a\in R$ and $1\leqslant i\neq j\leqslant m$, we pick
$$\hat{t}_{ij}(a)\in\varphi^{-1}({t}_{ij}(a)),
\quad\hat{f}_{ij}(a)\in\varphi^{-1}({f}_{ij}(a)),\text{ and }\hat{g}_{ij}(a)\in\varphi^{-1}({g}_{ij}(a)).$$
Obviously, the element  $[\hat{x},\hat{y}]\in\mathfrak{E}$ is independent of the choice of representatives $\hat{x}\in\varphi^{-1}(x)$ and $\hat{y}\in\varphi^{-1}(y)$ for $x,y\in\mathfrak{p}_m(R,{}^-)$.  Moreover, we have the following lemma\textcolor{red}{:}

\begin{lemma}
\label{lem:ele_tilde}
In the Lie superalgebra $\mathfrak{E}$, the following equalities hold:
\begin{enumerate}
\item $[\hat{f}_{ik}(a),\hat{g}_{kj}(b)]=[\hat{f}_{il}(a),\hat{g}_{lj}(b)]$,
\item $[\hat{t}_{ik}(a),\hat{f}_{kj}(b)]=[\hat{t}_{il}(a),\hat{f}_{lj}(b)]$,
\item $[\hat{g}_{ik}(a),\hat{t}_{kj}(b)]=[\hat{g}_{il}(a),\hat{t}_{lj}(b)]$,
\end{enumerate}
for $a,b\in R$ and pairwise distinct $i,j,k,l$.
\end{lemma}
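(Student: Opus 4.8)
The plan is to derive all three identities from a single device, exploiting that $\varphi$ is a \emph{central} extension, so $\ker\varphi$ lies in the center of $\mathfrak{E}$. Recall from the excerpt that for $x,y\in\mathfrak{p}_m(R,{}^-)$ the element $[\hat{x},\hat{y}]$ depends only on $x,y$, and since $\varphi$ is a homomorphism it represents $[x,y]$; in particular, whenever $[x,y]=0$ in $\mathfrak{p}_m(R,{}^-)$ the lift $[\hat{x},\hat{y}]$ lies in $\ker\varphi$ and is therefore central, so it brackets trivially with \emph{every} element of $\mathfrak{E}$. Combined with the graded Jacobi identity in $\mathfrak{E}$, this lets us slide an intermediate matrix index from $k$ to $l$. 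Note first that the hypothesis ``$i,j,k,l$ distinct'' forces $m\geqslant 4$, so there is nothing to prove when $m=3$.

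I would present (iii) as the model case. Since $t_{ij},f_{ij},g_{ij}$ in $\mathfrak{p}_m(R,{}^-)$ satisfy all of (STP00)--(STP12) (Proposition~\ref{prop:p_pft}), relation (\ref{STP07}) gives $g_{ik}(a)=[g_{il}(a),t_{lk}(1)]$, so I may take $\hat{g}_{ik}(a)=[\hat{g}_{il}(a),\hat{t}_{lk}(1)]$ and expand, using that $|t_{lk}(1)|=0$ makes the relevant graded Jacobi identity sign-free,
\begin{align*}
[\hat{g}_{ik}(a),\hat{t}_{kj}(b)]=[[\hat{g}_{il}(a),\hat{t}_{lk}(1)],\hat{t}_{kj}(b)]=[\hat{g}_{il}(a),[\hat{t}_{lk}(1),\hat{t}_{kj}(b)]]-[\hat{t}_{lk}(1),[\hat{g}_{il}(a),\hat{t}_{kj}(b)]].
\end{align*}
Because $i,j,k,l$ are distinct, $[g_{il}(a),t_{kj}(b)]=0$ by (\ref{STP08}), so the last summand vanishes; and $[t_{lk}(1),t_{kj}(b)]=t_{lj}(b)$ by (\ref{STP03}), whence $[\hat{g}_{ik}(a),\hat{t}_{kj}(b)]=[\hat{g}_{il}(a),\hat{t}_{lj}(b)]$. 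Statement (ii) follows by the identical pattern: write $t_{ik}(a)=[t_{il}(a),t_{lk}(1)]$ via (\ref{STP03}), discard the cross term, which carries the vanishing factor $[t_{il}(a),f_{kj}(b)]=0$ from (\ref{STP06}), and rewrite the surviving term as $[\hat{t}_{il}(a),\hat{f}_{lj}(b)]$ using (\ref{STP05}).

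For (i) one auxiliary step is needed, since the two sides will reduce to brackets of $t$-lifts with differing middle index. First I would establish $[\hat{t}_{ip}(1),\hat{t}_{pj}(c)]=[\hat{t}_{iq}(1),\hat{t}_{qj}(c)]$ for $c\in R$ and $p,q\notin\{i,j\}$ with $p\neq q$: decompose $t_{ip}(1)=[t_{iq}(1),t_{qp}(1)]$ by (\ref{STP03}), apply Jacobi, kill the cross term via $[t_{iq}(1),t_{pj}(c)]=0$ (which is (\ref{STP04}), as $i,q,p,j$ are distinct), and collapse the surviving term by (\ref{STP03}). Then, starting from $f_{ik}(a)=[t_{il}(1),f_{lk}(a)]$ (relation (\ref{STP05})) and expanding $[\hat{f}_{ik}(a),\hat{g}_{kj}(b)]$ by Jacobi, the term containing $[t_{il}(1),g_{kj}(b)]=0$ (by (\ref{STP08})) drops, leaving $[\hat{t}_{il}(1),[\hat{f}_{lk}(a),\hat{g}_{kj}(b)]]=[\hat{t}_{il}(1),\hat{t}_{lj}(ab)]$ by (\ref{STP11}). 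The same computation with $k$ and $l$ interchanged gives $[\hat{f}_{il}(a),\hat{g}_{lj}(b)]=[\hat{t}_{ik}(1),\hat{t}_{kj}(ab)]$, and the two right-hand sides coincide by the auxiliary identity applied with $p=l$, $q=k$, $c=ab$.

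The whole argument is bookkeeping with no genuine obstacle: the one point to watch is that, with $i,j,k,l$ all distinct, the index-disjointness hypotheses of each relation invoked among (STP03)--(STP12) are actually met, and that one picks out the correct summand of the graded Jacobi identity to discard. The $\mathbb{Z}/2\mathbb{Z}$-grading causes no trouble: in each decomposition used above one of the two factors of the resolved bracket is an even element $t_{\bullet\bullet}(1)$, so the Jacobi identity is sign-free and the eliminated term appears with coefficient $+1$; consequently no sign ever propagates into the final identities, which is precisely why the argument is wholly insensitive to the grading.
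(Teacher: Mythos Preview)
Your argument is correct and uses the same device as the paper: pick a decomposition of one factor via a bracket with an even element $t_{\bullet\bullet}(1)$, expand by Jacobi, and kill the cross term because its image in $\mathfrak{p}_m(R,{}^-)$ vanishes, hence its lift is central. Parts (ii) and (iii) match the paper's (unwritten) ``similar'' arguments exactly.

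The only difference is in (i). The paper decomposes the \emph{other} side, writing $\hat{f}_{il}(a)\equiv -[\hat f_{ik}(a),\hat t_{lk}(1)]$ modulo the center (from $[t_{lk}(1),f_{ik}(a)]=f_{il}(a)$, using (\ref{STP01}) and (\ref{STP05})); the cross term then carries $[f_{ik}(a),g_{lj}(b)]=0$ by (\ref{STP12}), and the surviving term collapses directly via $[t_{lk}(1),g_{lj}(b)]=-g_{kj}(b)$. Your choice $f_{ik}(a)=[t_{il}(1),f_{lk}(a)]$ instead lands on $[\hat t_{il}(1),\hat t_{lj}(ab)]$ and needs the auxiliary $t$-identity to finish. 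Both are fine; the paper's decomposition simply avoids the extra step.
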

\begin{proof}
(i) Since $[\hat{t}_{lk}(1),\hat{f}_{ki}(\rho(\bar{a}))]=\hat{f}_{li}(\rho(\bar{a}))\pmod{\ker\varphi}
$ and $\hat{f}_{il}(a)=\hat{f}_{li}(\rho(\bar{a}))\pmod{\ker\varphi}
$, we have $[\hat{f}_{ik}(a),\hat{t}_{lk}(1)]=-\hat{f}_{il}(a)\pmod{\ker\varphi}
$. Hence,
\begin{align*}
[\hat{f}_{il}(a),\hat{g}_{lj}(b)]
&=[[\hat{f}_{ik}(a),\hat{t}_{kl}(1)],\hat{g}_{lj}(b)]\\
&=[[\hat{f}_{ik}(a),\hat{g}_{lj}(b)],\hat{t}_{lk}(1)]+[\hat{f}_{ik}(a),[\hat{t}_{kl}(1),\hat{g}_{lj}(b)]]\\
&=0+[\hat{f}_{ik}(a),\hat{g}_{kj}(b)],
\end{align*}
which shows (i). The equalities (ii) and (iii) follow similarly.
\end{proof}

According to Lemma~\ref{lem:ele_tilde}, we define:
\begin{equation}
\tilde{t}_{ij}(a):=[\hat{f}_{ik}(a),\hat{g}_{kj}(1)],\quad
\tilde{f}_{ij}(a):=[\hat{t}_{ik}(1),\hat{f}_{kj}(a)],\text{ and }
\tilde{g}_{ij}(a):=[\hat{g}_{ik}(a),\hat{t}_{kj}(1)],\label{eq:tilde}
\end{equation}
where $a\in R$, $1\leqslant i\neq j\leqslant m$, and $1\leqslant k\leqslant m$ is chosen such that $k\neq i,j$.

\begin{lemma}
\label{lem:tilde_rln}
Suppose $m\geqslant3$. The elements $\tilde{t}_{ij}(a),\tilde{f}_{ij}(a)$ and $\tilde{g}_{ij}(a)$ for $a\in R$ and $1\leqslant i\neq j\leqslant m$ as defined in (\ref{eq:tilde}) satisfy all relations (\ref{STP00})-(\ref{STP12}) except (\ref{STP04}) and (\ref{STP10}). Moreover, for $a,b\in R$, we have
\begin{align}
\/[\tilde{t}_{ik}(a),\tilde{t}_{jk}(b)]&=0,&&\text{if }i,j,k\text{ are pairwise distinct,}\tag{STP04a}\label{STP04a}\\
\/[\tilde{t}_{ij}(a),\tilde{t}_{kl}(b)]&=0,&&\text{if }i,j,k,l\text{ are pairwise distinct,}\tag{STP04b}\label{STP04b}\\
\/[\tilde{g}_{ij}(a),\tilde{g}_{ij}(b)]&=0,&&\text{if }i\neq j,\tag{STP10a}\label{STP10a}\\
\/[\tilde{g}_{ij}(a),\tilde{g}_{jk}(b)]&=0,&&\text{if }i,j,k\text{ are pairwise distinct.}\tag{STP10b}\label{STP10b}
\end{align}
\end{lemma}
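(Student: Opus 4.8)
The proof is carried out relation by relation, and rests entirely on two facts about the central extension $\varphi:\mathfrak{E}\to\mathfrak{p}_m(R,{}^-)$: since $\ker\varphi$ is central, for $x,y\in\mathfrak{p}_m(R,{}^-)$ the bracket $[\hat{x},\hat{y}]\in\mathfrak{E}$ depends only on $x$ and $y$ and not on the chosen lifts; and any $u\in\mathfrak{E}$ with $\varphi(u)=0$ satisfies $[u,\mathfrak{E}]=0$. Applying $\varphi$ to the defining formulas (\ref{eq:tilde}) and using that $t_{ij}(a),f_{ij}(a),g_{ij}(a)$ already obey (\ref{STP00})--(\ref{STP12}) in $\mathfrak{p}_m(R,{}^-)$, we get $\varphi(\tilde{t}_{ij}(a))=t_{ij}(a)$, $\varphi(\tilde{f}_{ij}(a))=f_{ij}(a)$, $\varphi(\tilde{g}_{ij}(a))=g_{ij}(a)$; thus each renormalized element is a lift of its matrix, and for each relation the task is to show that the central discrepancy between the two sides dies after renormalization.

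First I would dispose of the structural relations. Linearity (\ref{STP00}) is immediate: $\hat{f}_{ik}(a+a')-\hat{f}_{ik}(a)-\hat{f}_{ik}(a')$ and $\hat{f}_{ik}(\lambda a)-\lambda\hat{f}_{ik}(a)$ lie in $\ker\varphi$ and are therefore annihilated by $\mathrm{ad}\,\hat{g}_{kj}(1)$, which gives $\Bbbk$-linearity of $\tilde{t}_{ij}$, and similarly for $\tilde{f}_{ij}$ and $\tilde{g}_{ij}$. For (\ref{STP01}) and (\ref{STP02}) one first checks, directly from (\ref{eq:p_gtor_f}) and (\ref{eq:p_gtor_g}), that $f_{kj}(\bar a)=f_{jk}(\rho(a))$ and $g_{kj}(\bar a)=-g_{jk}(\rho(a))$ hold in $\mathfrak{p}_m(R,{}^-)$; hence $\hat{f}_{kj}(\bar a)$ and $\hat{f}_{jk}(\rho(a))$ differ by a central element, and substituting into $\tilde{f}_{ij}(a)=[\hat{t}_{ik}(1),\hat{f}_{kj}(a)]$ yields (\ref{STP01}), with the sign in (\ref{STP02}) simply propagating from the matrix identity.

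The bulk of the work is the remaining relations of ``local'' type, namely (\ref{STP03}), (\ref{STP05}), (\ref{STP06}), (\ref{STP07}), (\ref{STP08}), (\ref{STP09}), (\ref{STP11}), (\ref{STP12}), all handled by one mechanism. To evaluate $[\tilde{x}_{pq}(a),\tilde{y}_{rs}(b)]$, rewrite the second factor by its definition (\ref{eq:tilde}) as a bracket $[\hat{u},\hat{v}]$ of two lifts, choosing the auxiliary index outside the set of indices already present (possible since $m\geqslant3$ in every configuration occurring among these relations), then expand by the super-Jacobi identity: one of the two resulting inner brackets $[\tilde{x}_{pq}(a),\hat{u}]$, $[\tilde{x}_{pq}(a),\hat{v}]$ has vanishing $\varphi$-image by the corresponding identity among matrices in $\mathfrak{p}_m(R,{}^-)$, hence is central and drops out, and the surviving term reduces, using Lemma~\ref{lem:ele_tilde} and the relevant matrix relation, to the claimed right-hand side. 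A few of these --- e.g. upgrading the definitional $\tilde{f}_{ij}(a)=[\hat{t}_{ik}(1),\hat{f}_{kj}(a)]$ to the general (\ref{STP05}) --- additionally invoke (\ref{STP03}) for the renormalized generators, which one establishes first.

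Finally, (\ref{STP04}) and (\ref{STP10}) cannot be obtained in this generality: the Jacobi argument above needs an auxiliary index disjoint from \emph{all} indices appearing, and for four-index brackets this would force $m\geqslant5$ while for certain three-index brackets it forces $m\geqslant4$. What survives at $m\geqslant3$ is precisely (\ref{STP04a}), (\ref{STP04b}), (\ref{STP10a}), (\ref{STP10b}): (\ref{STP04b}) involves four distinct indices and so is vacuous unless $m\geqslant4$; (\ref{STP10a}) uses only $i,j$ and one fresh index, so $m\geqslant3$ suffices; and (\ref{STP04a}), (\ref{STP10b}) are reached by first rewriting one factor through the already-proven (\ref{STP03}) or (\ref{STP07}) and then applying Jacobi, the resulting error terms being brackets already shown to vanish, brackets of central elements, or (by a short self-consistency check) copies of the expression itself. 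I expect the sole real obstacle to be bookkeeping: there is no new idea beyond the two central-extension principles and the super-Jacobi identity, but each of the dozen-odd relations, and for (\ref{STP04}) and (\ref{STP10}) each sub-case according to which indices coincide, needs its own choice of auxiliary index and its own verification of a vanishing bracket in $\mathfrak{p}_m(R,{}^-)$, and it is the signs in the super-Jacobi expansions where errors are easiest to make.
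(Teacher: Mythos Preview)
Your overall strategy---using centrality of $\ker\varphi$ together with the super-Jacobi identity to reduce each relation to identities already known in $\mathfrak{p}_m(R,{}^-)$---is exactly the paper's, and your treatment of (\ref{STP00}) and of the bracket relations (\ref{STP03}), (\ref{STP05})--(\ref{STP12}), (\ref{STP04a})--(\ref{STP10b}) is sound in outline.

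There is, however, a genuine gap in your argument for (\ref{STP01}) and (\ref{STP02}). Your substitution gives
\[
\tilde{f}_{ij}(\bar a)=[\hat{t}_{ik}(1),\hat{f}_{kj}(\bar a)]=[\hat{t}_{ik}(1),\hat{f}_{jk}(\rho(a))],
\]
but this last bracket is \emph{not} the defining expression for $\tilde{f}_{ji}(\rho(a))=[\hat{t}_{jk}(1),\hat{f}_{ki}(\rho(a))]$, and Lemma~\ref{lem:ele_tilde} does not relate the two. All you have established is that $\tilde{f}_{ij}(\bar a)-\tilde{f}_{ji}(\rho(a))$ is central; an extra argument is required to show it vanishes. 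The paper supplies this with a different device: it introduces $\tilde{h}_{ik}:=[\hat{f}_{ik}(1),\hat{g}_{ki}(1)]$ and verifies, by Jacobi applied to the defining brackets, that $\mathrm{ad}(\tilde{h}_{ik})$ acts as the identity on both $\tilde{f}_{ij}(\bar a)$ and $\tilde{f}_{ji}(\rho(a))$. Since $\mathrm{ad}(\tilde{h}_{ik})$ annihilates every central element, one gets
\[
0=[\tilde{h}_{ik},\,\tilde{f}_{ij}(\bar a)-\tilde{f}_{ji}(\rho(a))]=\tilde{f}_{ij}(\bar a)-\tilde{f}_{ji}(\rho(a)).
\]
The same eigenvalue trick handles (\ref{STP02}). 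This is the one ingredient beyond the Jacobi mechanism you describe, and it is needed precisely because (\ref{STP01}) and (\ref{STP02}) are symmetry relations for a single generator rather than bracket relations between two, so there is no second factor to expand.
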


\begin{proof}
The proof is accomplished by verifying the relations one-by-one. 

For (\ref{STP00}), the map $a\mapsto\tilde{t}_{ij}(a)$ is obviously $\Bbbk$-linear since both $\hat{f}_{ik}(ca)$ and $c\hat{f}_{ik}(a)$ are contained in $\varphi^{-1}(f_{ik}(ca))$ for $a\in R$ and $c\in\Bbbk$, and so are the maps $a\mapsto\tilde{f}_{ij}(a)$ and $a\mapsto\tilde{g}_{ij}(a)$.

For (\ref{STP01}), choose $1\leqslant k\leqslant m$ such that $i,j,k$ are pairwise distinct and set
$\tilde{h}_{ik}=[\hat{f}_{ik}(1),\hat{g}_{ki}(1)].$ We first show that 
\begin{equation}
[\tilde{h}_{ik},\tilde{f}_{ij}(a)]=\tilde{f}_{ij}(a).\label{eq:tildehf}
\end{equation}
By the definition of $\tilde{f}_{ij}(a)$, we have
\begin{align*}
[\tilde{h}_{ik},\tilde{f}_{ij}(a)]=[\tilde{h}_{ik},[\hat{t}_{ik}(1),\hat{f}_{kj}(a)]]
=[[\tilde{h}_{ik},\hat{t}_{ik}(1)],\tilde{f}_{kj}(a)]+[\hat{t}_{ik}(1),[\tilde{h}_{ik},\hat{f}_{kj}(a)]].
\end{align*}
On the other hand, $\varphi(\tilde{h}_{ik})=e_{ii}(1)-e_{kk}(1)-e_{m+i,m+i}(1)+e_{m+k,m+k}(1), \varphi(\hat{t}_{ik}(1))=t_{ik}(1)$ and $\varphi(\hat{f}_{kj}(a))=f_{kj}(a)$, which yield that
$$\varphi([\tilde{h}_{ik},\hat{t}_{ik}(1)])= 2t_{ik}(1),\text{ and }\varphi([\tilde{h}_{ik},\hat{f}_{kj}(a)])=-f_{kj}(a).$$
Since $[\hat{x},\hat{y}]$ is independent of the choice of representatives $\hat{x}\in\varphi^{-1}(x)$ and $\hat{y}\in\varphi^{-1}(y)$, we have
$$[[\tilde{h}_{ik},\hat{t}_{ik}(1)],\tilde{f}_{kj}(a)]=2[\hat{t}_{ik}(1),\hat{f}_{kj}(a)],\text{ and }[\hat{t}_{ik}(1),[\tilde{h}_{ik},\hat{f}_{kj}(a)]]=-[\hat{t}_{ik}(1),\hat{f}_{kj}(a)],$$
which imply (\ref{eq:tildehf}).

Now, $f_{ij}(\bar{a})=f_{ji}(\rho(a))\in\mathfrak{p}_m(R,{}^-)$ implies $\tilde{f}_{ij}(\bar{a})=\tilde{f}_{ji}(\rho(a))\pmod{\ker\varphi}
$. Hence,
$$0=[\tilde{h}_{ik},\tilde{f}_{ij}(\bar{a})-\tilde{f}_{ji}(\rho(a))]=\tilde{f}_{ij}(\bar{a})-\tilde{f}_{ji}(\rho(a)).$$
The relation (\ref{STP01}) is verified.

Other relations can be checked by similar arguments. We omit tedious discussion here.
\end{proof}

\begin{remark}The relation (\ref{STP04}) states that
$$[\mathbf{t}_{ij}(a),\mathbf{t}_{kl}(b)]=0,\text{ if }i\neq j\neq k\neq l\neq i,$$
which is equivalent to (\ref{STP04a}), (\ref{STP04b}) and
\begin{equation}
[\mathbf{t}_{ij}(a),\mathbf{t}_{ik}(b)]=0,\text{ if }i,j,k\text{ are pairwise distinct.}\tag{STP04c}\label{STP04c}
\end{equation}
By Lemma~\ref{lem:tilde_rln}, the elements $\tilde{t}_{ij}(a)$'s always satisfy (\ref{STP04a}) and (\ref{STP04b}), but do not necessarily satisfy (\ref{STP04c}). A counterexample will appear as a central extension of $\mathfrak{p}_3(R,{}^-)$ in Section~\ref{sec:p3}. There are also central extensions of $\mathfrak{p}_4(R,{}^-)$ which do not satisfy (\ref{STP10}) (see Section~\ref{sec:p4}).
\end{remark}

\begin{proposition}
\label{thm:pm_uce}
 Let $m\geqslant5$ and $(R,{}^-)$ be a unital associative superalgebra with superinvolution. Then the central extension $\psi:\mathfrak{stp}_m(R,{}^-)\rightarrow\mathfrak{p}_m(R,{}^-)$ is universal.
\end{proposition}
\begin{proof}
Let $\varphi:\mathfrak{E}\rightarrow\mathfrak{p}_m(R,{}^-)$ be an arbitrary central extension. It suffices to show there exists a unique homomorphism $\varphi':\mathfrak{stp}_m(R,{}^-)\rightarrow\mathfrak{E}$ such that $\varphi\circ\varphi'=\psi$.
\medskip

For the existence of $\varphi'$, we set $\tilde{t}_{ij}(a),\tilde{f}_{ij}(a),\tilde{g}_{ij}(a)\in\mathfrak{E}$ for $a\in R$ and $1\leqslant i\neq j\leqslant m$ as in (\ref{eq:tilde}). They satisfy all relations (\ref{STP00})-(\ref{STP12}) except (\ref{STP04}) and (\ref{STP10}) as shown in Lemma~\ref{lem:tilde_rln}. Under the additional assumption that $m\geqslant5$, we show that these elements also satisfy (\ref{STP04}) and (\ref{STP10}).

For (\ref{STP04}), since (\ref{STP04a}) and (\ref{STP04b}) hold by Lemma~\ref{lem:tilde_rln}, it suffices to show
\begin{equation}
[\tilde{t}_{ij}(a),\tilde{t}_{ik}(b)]=0,\text{ if }i,j,k\text{ are pairwise distinct.}\tag{\ref{STP04c}}
\end{equation}
The assumption that $m\geqslant5$ allows us to choose $1\leqslant l\leqslant m$ such that $l\neq i,j,k$. As in the proof of Lemma~\ref{lem:tilde_rln}, we let $\tilde{h}_{lj}=[\hat{f}_{lj}(1),\hat{g}_{jl}(1)]$, and deduce from $[\tilde{t}_{ij}(a),\tilde{t}_{ik}(b)]\in\ker\varphi$ that
$$0=[\tilde{h}_{lj},[\tilde{t}_{ij}(a),\tilde{t}_{ik}(b)]]
=[[\tilde{h}_{lj},\tilde{t}_{ij}(a)],\tilde{t}_{ik}(b)]
+[\tilde{t}_{ij}(a),[\tilde{h}_{lj},\tilde{t}_{ik}(b)]]
=[\tilde{t}_{ij}(a),\tilde{t}_{ik}(b)],$$
i.e., (\ref{STP04c}) holds.

For (\ref{STP10}), besides (\ref{STP10a}) and (\ref{STP10b}) in Lemma~\ref{lem:tilde_rln}, we need to show
\begin{equation}
[\tilde{g}_{ij}(a),\tilde{g}_{kl}(b)]=0,\text{ if }i,j,k,l\text{ are pairwise distinct}.\tag{STP10c}\label{STP10c}
\end{equation}
Since $m\geqslant5$, we are permitted to choose $k'$ such that $k'\neq i,j,k,l$. Hence,
\begin{align*}
[\tilde{g}_{ij}(a),\tilde{g}_{kl}(b)]
&=[\tilde{g}_{ij}(a),[\hat{g}_{kk'}(b),\hat{t}_{k'l}(1)]]\\
&=[[\tilde{g}_{ij}(a),\hat{g}_{kk'}(b)],\hat{t}_{k'l}(1)]+(-1)^{(1+|a|)(1+|b|)}[\hat{g}_{kk'}(b),[\tilde{g}_{ij}(a),\hat{t}_{k'l}(1)]].
\end{align*}
Note that $[\tilde{g}_{ij}(a),\hat{g}_{kk'}(b)]\in\ker\psi$ and $[\tilde{g}_{ij}(a),\hat{t}_{k'l}(1)]\in\ker\psi$ are central in $\mathfrak{E}$, (\ref{STP10c}) follows.

Now, the elements $\tilde{t}_{ij}(a),\tilde{f}_{ij}(a),\tilde{g}_{ij}(a)\in\mathfrak{E}$ with $a\in R$ and $1\leqslant i\neq j\leqslant m$ satisfy all relations (\ref{STP00})-(\ref{STP12}). Hence, there is a homomorphism of Lie superalgebras
$$\varphi':\mathfrak{stp}_m(R,{}^-)\rightarrow\mathfrak{E},$$
such that
$$\varphi'(\mathbf{t}_{ij}(a))=\tilde{t}_{ij}(a),\quad
\varphi'(\mathbf{f}_{ij}(a))=\tilde{f}_{ij}(a),\quad
\varphi'(\mathbf{g}_{ij}(a))=\tilde{g}_{ij}(a),$$
for $a\in R$ and $1\leqslant i\neq j\leqslant m$, i.e., $\varphi\circ\varphi'=\psi$.
\medskip

It remains to show the uniqueness of $\varphi'$. Let $\tilde{\varphi}':\mathfrak{stp}_m(R,{}^-)\rightarrow\mathfrak{E}$ be another homomorphism of Lie superalgebras such that $\varphi\circ\tilde{\varphi}'=\psi$. Then
$$\tilde{\varphi}'(\mathbf{t}_{ij}(a))\in\varphi^{-1}({t}_{ij}(a)),\quad
\tilde{\varphi}'(\mathbf{f}_{ij}(a))\in\varphi^{-1}({f}_{ij}(a)),\text{ and }
\tilde{\varphi}'(\mathbf{g}_{ij}(a))\in\varphi^{-1}({g}_{ij}(a)),$$
for $a\in R$ and $1\leqslant i\neq j\leqslant m$. Now, $\mathbf{t}_{ij}(a)=[\mathbf{f}_{ik}(a),\mathbf{g}_{kj}(1)]$. Hence,
$$\tilde{\varphi}'(\mathbf{t}_{ij}(a))
=[\tilde{\varphi}'(\mathbf{f}_{ik}(a)),\tilde{\varphi}'(\mathbf{g}_{kj}(1))]
=[\hat{f}_{ij}(a),\hat{g}_{kj}(1)]=\tilde{t}_{ij}(a)=\varphi'(\mathbf{t}_{ij}(a)).$$
Similarly, 
$$\mathbf{f}_{ij}(a)=[\mathbf{t}_{ik}(1),\mathbf{f}_{kj}(a)]\text{ and }\mathbf{g}_{ij}(a)=[\mathbf{g}_{ik}(a),\mathbf{t}_{kj}(1)]$$ 
imply that $\tilde{\varphi}'(\mathbf{f}_{ij}(a))=\tilde{\varphi}(\mathbf{f}_{ij}(a))$ and $\tilde{\varphi}'(\mathbf{g}_{ij}(a))=\tilde{\varphi}(\mathbf{g}_{ij}(a))$, respectively. Therefore, $\varphi'=\tilde{\varphi}'$ since $\mathfrak{stp}_m(R,{}^-)$ is generated by $\mathbf{t}_{ij}(a),\mathbf{f}_{ij}(a),\mathbf{g}_{ij}(a)$ with $a\in R$ and  $1\leqslant i\neq j\leqslant m$. 
\end{proof}

\begin{theorem}
\label{thm:hml_pm}
Let $m\geqslant5$ and $(R,{}^-)$ be a unital associative superalgebra with superinvolution. Then
$$\mathrm{H}_2(\mathfrak{p}_m(R,{}^-))=\fourIdx{}{+}{}{1}{\mathrm{HD}}(R,{}^-\circ\rho),$$
where $\rho$ is the $\Bbbk$-linear map given in (\ref{eq:rho}) and ${}^-\circ\rho$ is the superinvolution on $R$ obtained by composing the superinvolution ${}^-$ with $\rho$.
\end{theorem}

\begin{proof}
By Proposition~\ref{thm:pm_uce}, the canonical homomorphism $\psi:\mathfrak{stp}_m(R,{}^-)\rightarrow\mathfrak{p}_m(R,{}^-)$ is the universal central extension for $m\geqslant5$. Hence, the second homology of $\mathfrak{p}_m(R,{}^-)$ can be identified with $\ker\psi$, which is isomorphic to $\fourIdx{}{+}{}{1}{\mathrm{HD}}(R,{}^-\circ\rho)$ by Proposition~\ref{prop:stp_ct}.
\end{proof}

\begin{remark}
\label{rmk:hml_com}
If $R$ is super-commutative, then $\fourIdx{}{+}{}{1}{\mathrm{HD}}(R,\mathrm{id})=0$. Hence,
$$\mathrm{H}_2(\mathfrak{p}_m(\Bbbk)\otimes_{\Bbbk}R)
\cong\mathrm{H}_2(\mathfrak{p}_m(R,\rho))
\cong\fourIdx{}{+}{}{1}{\mathrm{HD}}(R,\mathrm{id})
=0.$$
This recovers the results about the second homology of $\mathfrak{p}_m(\Bbbk)\otimes_{\Bbbk}R$ given in \cite{IoharaKoga2005} and \cite{MartinezZelmanov2003}.
\end{remark}
\medskip

In the special case where $(R,{}^-)=(S\oplus S^{\mathrm{op}},\mathrm{ex})$ for a unital associative superalgebra $S$, we have

\begin{corollary}
\label{cor:slmm}
Let $m\geqslant5$ and $S$ be an arbitrary unital associative superalgebra. Then
$$\mathrm{H}_2(\mathfrak{sl}_{m|m}(S))\cong\fourIdx{}{+}{}{1}{\mathrm{HD}}(S\oplus S^{\mathrm{op}},\mathrm{ex}\circ\rho)\cong\mathrm{HC}_1(S),$$
where $\mathrm{HC}_1(S)$ is the first $\mathbb{Z}/2\mathbb{Z}$-graded cyclic homology of $S$ as defined in \cite{ChenSun2015}.
\end{corollary}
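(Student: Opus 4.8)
The assertion is a chain of two isomorphisms, the first of which is essentially free. By Example~\ref{ex:iso_p_SS} there is an isomorphism of Lie superalgebras $\mathfrak{sl}_{m|m}(S)\cong\mathfrak{p}_m(S\oplus S^{\mathrm{op}},\mathrm{ex})$, so applying Theorem~\ref{thm:hml_pm} with $(R,{}^-)=(S\oplus S^{\mathrm{op}},\mathrm{ex})$ gives $\mathrm{H}_2(\mathfrak{sl}_{m|m}(S))\cong\fourIdx{}{+}{}{1}{\mathrm{HD}}(S\oplus S^{\mathrm{op}},\mathrm{ex}\circ\rho)$ for $m\geqslant5$. It therefore remains to identify $\fourIdx{}{+}{}{1}{\mathrm{HD}}(S\oplus S^{\mathrm{op}},\mathrm{ex}\circ\rho)$ with $\mathrm{HC}_1(S)$, which is the substantive part.

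The plan is to compute $\fourIdx{}{+}{}{1}{\mathrm{HD}}(S\oplus S^{\mathrm{op}},\mathrm{ex}\circ\rho)$ by hand from the description in~(\ref{eq:HD1}). Put $R=S\oplus S^{\mathrm{op}}$ and note that the involution $\mathrm{ex}\circ\rho$ interchanges the $S$- and $S^{\mathrm{op}}$-slots while the multiplication of $R$ is slot-wise; hence the submodule $I\subseteq R\otimes_{\Bbbk}R$ respects the decomposition $R\otimes R=P_{1}\oplus P_{2}$ with $P_{1}=(S\otimes S)\oplus(S^{\mathrm{op}}\otimes S^{\mathrm{op}})$ and $P_{2}=(S\otimes S^{\mathrm{op}})\oplus(S^{\mathrm{op}}\otimes S)$. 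First I would show that $P_{2}$ contributes nothing to $\langle R,R\rangle$: feeding the triple $(a\oplus0,\,1_{S}\oplus0,\,0\oplus d)$ into the cyclic relation collapses, after using unitality of $S$, to $(a\oplus0)\otimes(0\oplus d)\in I$, and a symmetric choice kills $(0\oplus d)\otimes(a\oplus0)$. Next, the relation $u\otimes v+\overline{u}\otimes\overline{v}$ applied to $u,v\in S\oplus0$ identifies the $S^{\mathrm{op}}\otimes S^{\mathrm{op}}$-slot of $P_{1}$ with $S\otimes S$ via $c\otimes d\mapsto-\rho(c)\otimes\rho(d)$; under this identification the symmetry relations match on the nose, and the cyclic relations coming from the $S^{\mathrm{op}}$-slot become --- this is where the $\rho$-twist earns its keep --- precisely the graded cyclic relations in $S$. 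The outcome is an isomorphism from the degree-one cyclic coefficient module of $S$ (namely $S\otimes_{\Bbbk}S$ modulo the graded-symmetry and graded-cyclicity relations of~(\ref{eq:HD1}), but dropping the involution relation in $I$) onto $\langle R,R\rangle$, sending $\langle a,b\rangle_{\mathrm{cyc}}\mapsto\langle a\oplus0,\,b\oplus0\rangle$. Finally, for such a representative $[a\oplus0,b\oplus0]=[a,b]_{S}\oplus0$ and $\overline{[a,b]_{S}\oplus0}=0\oplus\rho([a,b]_{S})$, so the condition $\sum_{i}\overline{[a_{i},b_{i}]}=-\sum_{i}[a_{i},b_{i}]$ defining $\fourIdx{}{+}{}{1}{\mathrm{HD}}(R,\mathrm{ex}\circ\rho)$ becomes $\sum_{i}[a_{i},b_{i}]_{S}=0$ in $S$, which is exactly the condition cutting out $\mathrm{HC}_{1}(S)$ inside that module. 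This yields the desired $\Bbbk$-linear isomorphism.

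A shorter, less hands-on alternative is to invoke C. Chen and Y. Sun's theorem from~\cite{ChenSun2015} that $\mathrm{H}_2(\mathfrak{sl}_{m|m}(S))\cong\mathrm{HC}_1(S)$ whenever $2m\geqslant5$ --- equivalently, by Propositions~\ref{prop:stpSS} and~\ref{thm:pm_uce}, that $\mathfrak{st}_{m|m}(S)$ is the universal central extension of $\mathfrak{sl}_{m|m}(S)$ for $m\geqslant5$ with kernel $\mathrm{HC}_1(S)$ --- and then to combine this with the first isomorphism above. The point I expect to be fussiest in the self-contained route is the sign-and-$\rho$ bookkeeping in identifying the $S^{\mathrm{op}}\otimes S^{\mathrm{op}}$-slot with $S\otimes S$: one must verify that the three families of generators of $I$ transported from the $S^{\mathrm{op}}$-side land exactly inside (and span, modulo the others) the corresponding relations on the $S$-side, so that no spurious relation is created and none is lost. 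This is the degree-one, $\mathbb{Z}/2\mathbb{Z}$-graded incarnation of the standard isomorphism $\mathrm{HC}_{1}(A^{\mathrm{op}})\cong\mathrm{HC}_{1}(A)$, and I would dispatch it by writing the slot isomorphism out explicitly and checking the symmetry, cyclicity, and involution generators one at a time.
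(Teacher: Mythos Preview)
Your proposal is correct and follows essentially the same route as the paper's proof. Both obtain the first isomorphism from Example~\ref{ex:iso_p_SS} together with Theorem~\ref{thm:hml_pm}, and both compute $\langle R,R\rangle$ for $R=S\oplus S^{\mathrm{op}}$ by (i) annihilating the mixed tensors $(a\oplus0)\otimes(0\oplus d)$ via the cyclic relation with the triple $(a\oplus0,\,1\oplus0,\,0\oplus d)$, and then (ii) identifying the remaining piece with the graded cyclic module $\langle S,S\rangle_{\mathsf{c}}$ of $S$, checking finally that the commutator constraint defining $\fourIdx{}{+}{}{1}{\mathrm{HD}}$ collapses to $\sum_i[a_i,b_i]_S=0$, i.e.\ the defining condition of $\mathrm{HC}_1(S)$.

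The only cosmetic difference is direction: the paper writes down the forward map $\langle a_1\oplus a_2,\,b_1\oplus b_2\rangle\mapsto\langle a_1,b_1\rangle_{\mathsf{c}}+\langle a_2,b_2\rangle_{\mathsf{c}}$ and asserts it is an isomorphism, whereas you describe its inverse $\langle a,b\rangle_{\mathrm{cyc}}\mapsto\langle a\oplus0,\,b\oplus0\rangle$ and argue surjectivity by folding the $S^{\mathrm{op}}\otimes S^{\mathrm{op}}$-slot into the $S\otimes S$-slot via the involution relation. Your remark that the sign bookkeeping in that fold is the graded avatar of $\mathrm{HC}_1(A^{\mathrm{op}})\cong\mathrm{HC}_1(A)$ is apt and is exactly what underlies the paper's (unjustified) claim that its map is well defined and bijective. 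The alternative of invoking \cite{ChenSun2015} directly is of course valid but would undercut the point of the corollary, which is to recover that result.
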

\begin{proof}
By Example~\ref{ex:iso_p_SS}, the Lie superalgebra $\mathfrak{sl}_{m|m}(S)$ is isomorphic to $\mathfrak{p}_m(S\oplus S^{\mathrm{op}},\mathrm{ex})$. Theorem~\ref{thm:hml_pm} ensures that 
$$\mathrm{H}_2(\mathfrak{sl}_{m|m}(S))
\cong
\mathrm{H}_2(\mathfrak{p}_m(S\oplus S^{\mathrm{op}},\mathrm{ex}))
\cong
\fourIdx{}{+}{}{1}{\mathrm{HD}}(S\oplus S^{\mathrm{op}},\mathrm{ex}\circ\rho).$$

We need to establish an isomorphism between $\fourIdx{}{+}{}{1}{\mathrm{HD}}(S\oplus S^{\mathrm{op}},\mathrm{ex}\circ\rho)$ and the first $\mathbb{Z}/2\mathbb{Z}$-graded cyclic homology $\mathrm{HC}_1(S)$. It is known from \cite{ChenSun2015} that 
$$\mathrm{HC}_1(S):=\left\{\sum\limits_i\langle a_i,b_i\rangle_{\mathsf{c}}
\in\langle S,S\rangle_{\mathsf{c}}\middle|\sum\limits_i[a_i,b_i]=0\right\},$$
where $\langle S,S\rangle_{\mathsf{c}}=(S\otimes S)/I_{\mathsf{c}}$, and $I_{\mathsf{c}}$ be the $\Bbbk$-submodule of $S\otimes_{\Bbbk}S$ generated by 
$$a\otimes b-(-1)^{|a||b|}b\otimes a\text{ and  }(-1)^{|a||c|}ab\otimes c+(-1)^{|b||a|}bc\otimes a+(-1)^{|c||b|}ca\otimes b,\quad a,b,c\in S.$$
Indeed, $\fourIdx{}{+}{}{1}{\mathrm{HD}}(S\oplus S^{op},\mathrm{ex}\circ\rho)$ is a $\Bbbk$-submodule of $\langle S\oplus S^{\mathrm{op}},S\oplus S^{\mathrm{op}}\rangle$. The $\Bbbk$-module $\langle S\oplus S^{\mathrm{op}},S\oplus S^{\mathrm{op}}\rangle$ can be identified with $\langle S,S\rangle_{\mathsf{c}}$ as follows: By (\ref{eq:HD1}), we deduce that
\begin{align*}
\langle a\oplus0,0\oplus b\rangle
=&\langle (a\oplus0)(1\oplus0),0\oplus b\rangle+0+0\\
=&\langle (a\oplus0)(1\oplus0),0\oplus b\rangle
+(-1)^{|a||b|}\langle (1\oplus0)(0\oplus b),a\oplus0\rangle\\
&+(-1)^{|a||b|}\langle (0\oplus b)(a\oplus 0),1\oplus0\rangle\\
=&0,
\end{align*}
and then $\langle 0\oplus a,b\oplus0\rangle=0$. It follows that
$$\langle a_1\oplus a_2,b_1\oplus b_2\rangle=\langle a_1\oplus0,b_1\oplus0\rangle
+\langle a_2\oplus0,b_2\oplus0\rangle,\quad a_1,a_2,b_1,b_2\in S.$$
Consequently, 
$$\langle a_1\oplus a_2,b_1\oplus b_2\rangle\mapsto\langle a_1,b_1\rangle_{\mathsf{c}}
+\langle a_2,b_2\rangle_{\mathsf{c}}$$
defines an isomorphism $\langle S\oplus S^{\mathrm{op}},S\oplus S^{\mathrm{op}}\rangle\rightarrow\langle S,S\rangle_{\mathsf{c}}$. Its restriction to $\fourIdx{}{+}{}{1}{\mathrm{HD}}(S\oplus S^{\mathrm{op}},\mathrm{ex}\circ\rho)$ gives an isomorphism onto 
$\mathrm{HC}_1(S)$.
\end{proof}

\begin{remark}
This corollary recovers the second homology of $\mathfrak{sl}_{m|m}(S)$ for $m\geqslant5$ obtained in \cite{ChenSun2015}. As a byproduct, we obtain the isomorphism
$$\fourIdx{}{+}{}{1}{\mathrm{HD}}(S\oplus S^{\mathrm{op}},\mathrm{ex}\circ\rho)\cong\mathrm{HC}_1(S),$$
which indicates that the first $\mathbb{Z}/2\mathbb{Z}$-graded cyclic homology can be realized via the first $\mathbb{Z}/2\mathbb{Z}$-graded dihedral homology. However, it is unknown yet whether an isomorphism between a higher degree cyclic homology and a higher degree dihedral homology exists.
\end{remark}

\section{Exceptional Case I: Second Homology of $\mathfrak{p}_4(R,{}^-)$}
\label{sec:p4}

Different from the universal central extension of $\mathfrak{p}_m(R,{}^-)$ for $m\geqslant5$ discussed in the previous sections, the central extension $\psi:\mathfrak{stp}_4(R,{}^-)\rightarrow\mathfrak{p}_4(R,{}^-)$ is not necessarily universal. As we will exhibit in this section, the universal central extension of $\mathfrak{stp}_4(R,{}^-)$ can be obtained via creating a $2$-cocycle with values in the $\Bbbk$-module $R/(R_{(-)}\cdot R)$, where $R_{(-)}\cdot R$ is the right ideal of $R$ generated by $\bar{a}-\rho(a)$ for $a\in R$. 
\medskip

Our primary aim is to define a $2$-cocycle on $\mathfrak{stp}_4(R,{}^-)$ with values in $R/(R_{(-)}\cdot R)$. Note that $\mathfrak{stp}_4(R,{}^-)$ is decomposed as a direct sum of Lie sub-superalgebras:
$$\mathfrak{stp}_4(R,{}^-)=\mathfrak{a}\oplus\mathfrak{b}$$
where
\begin{align*}
\mathfrak{a}:&=\mathrm{span}_{\Bbbk}\{\mathbf{h}_{ij}(a,b), \mathbf{t}_{ij}(a),\mathbf{f}_i(a),\mathbf{f}_{ij}(a)|a,b\in R, 1\leqslant i\neq j\leqslant 4\},\\
\mathfrak{b}:&=\mathrm{span}_{\Bbbk}\{\mathbf{g}_i(a),\mathbf{g}_{ij}(a)|a\in R, 1\leqslant i\neq j\leqslant 4\}.
\end{align*}
We first define a $\Bbbk$-linear map $\beta_0:\mathfrak{b}\times\mathfrak{b}\rightarrow R/(R_{(-)}\cdot R)$ by
\begin{eqnarray*}
&\beta_0(\mathbf{g}_{ij}(a),\mathbf{g}_{kl}(b))=\epsilon(ijkl)\boldsymbol{\pi}(a\cdot\rho(b)),&\\
&\beta_0(\mathbf{g}_{i}(a),\mathfrak{b})=\beta_0(\mathfrak{b},\mathbf{g}_i(a))=0,&
\end{eqnarray*}
for $a,b\in R$, $1\leqslant i\neq j\leqslant 4$ and $1\leqslant k\neq l\leqslant 4$, where 
$$\boldsymbol{\pi}:R\rightarrow R/(R_{(-)}\cdot R)$$
is the canonical quotient map of $\Bbbk$-modules, and 
$$\epsilon(ijkl)=\begin{cases}\text{the sign of the permutation }(ijkl),&\text{if }\{i,j,k,l\}=\{1,2,3,4\},\\0,&\text{otherwise.}\end{cases}$$
The $\Bbbk$-linear map $\beta_0$ is well-defined since $\mathbf{g}_{ij}(\bar{a})=-\mathbf{g}_{ji}(\rho(a))$ and $\boldsymbol{\pi}$ satisfies
\begin{equation}
\boldsymbol{\pi}(\bar{a}b)=\boldsymbol{\pi}(\rho(a)b), \quad a,b\in R.\label{eq:p4pi}
\end{equation} 

The $\Bbbk$--bilinear map $\beta_0:\mathfrak{b}\times\mathfrak{b}\rightarrow R/(R_{(-)}\cdot R)$ is extended to a $\Bbbk$--bilinear map
$$\beta:\mathfrak{stp}_4(R,{}^-)\times\mathfrak{stp}_4(R,{}^-)\rightarrow R/(R_{(-)}\cdot R),$$
such that $\mathfrak{a}$ lies in the radical of $\beta$, i.e.,
$$\beta(\mathfrak{a},\mathfrak{stp}_4(R,{}^-))=
\beta(\mathfrak{stp}_4(R,{}^-),\mathfrak{a})=0.$$
The fact that $\beta$ is a $2$-cocycle on $\mathfrak{stp}_4(R,{}^-)$ is verified in the following lemma.

\begin{lemma}
\label{lem:stp4_2cy2}
The $\Bbbk$-bilinear map $\beta$ is a 2-cocycle on $\mathfrak{stp}_4(R,{}^-)$ with values in $R/(R_{(-)}\cdot R)$.
\end{lemma}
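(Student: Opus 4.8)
The plan is to verify directly that $\beta$ satisfies the cocycle identity
\[
(-1)^{|x||z|}\beta([x,y],z)+(-1)^{|y||x|}\beta([y,z],x)+(-1)^{|z||y|}\beta([z,x],y)=0
\]
for all homogeneous $x,y,z\in\mathfrak{stp}_4(R,{}^-)$, together with the super-antisymmetry $\beta(x,y)=-(-1)^{|x||y|}\beta(y,x)$, which is immediate from $\epsilon(ijkl)=-\epsilon(jikl)$ and $\boldsymbol{\pi}(a\cdot\rho(b))=\boldsymbol{\pi}(b\cdot\rho(a))$ (the latter because $\boldsymbol{\pi}$ kills $[R,R]\cdot R$ and $a\cdot\rho(b)-b\cdot\rho(a)$ lies in that submodule up to an element of $R_{(-)}\cdot R$). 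First I would reduce the cocycle identity to the case where each of $x,y,z$ is one of the generators $\mathbf{t}_{ij}(a),\mathbf{f}_{ij}(a),\mathbf{g}_{ij}(a)$ (or $\mathbf{f}_i(a),\mathbf{g}_i(a)$), by $\Bbbk$-trilinearity. Since $\mathfrak{a}$ lies in the radical of $\beta$ and is a subalgebra with $[\mathfrak{a},\mathfrak{b}]\subseteq\mathfrak{b}$, and since $\beta(\mathbf{g}_i(a),-)=0$, the only way a term $\beta([x,y],z)$ can be nonzero is if $[x,y]$ has a nonzero component in $\mathrm{span}_\Bbbk\{\mathbf{g}_{ij}(a)\}$ and $z$ lies in $\mathrm{span}_\Bbbk\{\mathbf{g}_{kl}(b)\}$; this drastically limits the cases.

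Concretely, the surviving configurations are: (a) all three of $x,y,z$ of type $\mathbf{g}$, in which case every bracket $[\mathbf{g}_{ij}(a),\mathbf{g}_{kl}(b)]=0$ by (STP10), so all three terms vanish trivially; (b) exactly one of them, say $z=\mathbf{g}_{kl}(c)$, is of type $\mathbf{g}$, while $x,y\in\mathfrak{a}$ contribute a bracket $[x,y]$ whose $\mathfrak{b}$-component must again be a combination of $\mathbf{g}_{pq}(\cdot)$ — but $[\mathfrak{a},\mathfrak{a}]\subseteq\mathfrak{a}$, so $\beta([x,y],z)=0$, and the other two terms $\beta([\mathbf{g}_{kl}(c),x],y)$, $\beta([y,\mathbf{g}_{kl}(c)],x)$ involve brackets landing in $\mathfrak{b}$ paired against $x,y\in\mathfrak{a}$, hence also vanish; (c) exactly two of them, say $y=\mathbf{g}_{ij}(a)$ and $z=\mathbf{g}_{kl}(b)$, are of type $\mathbf{g}$ and $x\in\mathfrak{a}$: here $[y,z]=0$ so that term drops, and one is left to check
\[
(-1)^{|x||z|}\beta([x,\mathbf{g}_{ij}(a)],\mathbf{g}_{kl}(b))
+(-1)^{|z||y|}\beta([\mathbf{g}_{kl}(b),x],\mathbf{g}_{ij}(a))=0.
\]
This is the only genuine computation. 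I would treat $x=\mathbf{t}_{pq}(c)$ using (STP07)–(STP08) (so $[\mathbf{t}_{pq}(c),\mathbf{g}_{ij}(a)]$ is $0$ or $\pm\mathbf{g}$ of the appropriate form), and $x=\mathbf{f}_{pq}(c)$ using (STP11)–(STP12) (so $[\mathbf{f}_{pq}(c),\mathbf{g}_{ij}(a)]$ is $0$ or a $\mathbf{t}$, which lies in $\mathfrak{a}$ and hence pairs to zero against $\mathbf{g}_{kl}(b)$), and $x=\mathbf{h}_{pq}(c,d)$ or $\mathbf{f}_p(c)$ via the same relations. For the $\mathbf{t}$ case the identity reduces, after applying $\epsilon$, to an identity of the shape $\epsilon(\cdot)\epsilon(\cdot)\boldsymbol{\pi}(ca\cdot\rho(b))+\epsilon(\cdot)\epsilon(\cdot)\boldsymbol{\pi}(a\cdot\rho(cb))=0$; here I would invoke $\boldsymbol{\pi}(\bar{c}a)=\boldsymbol{\pi}(\rho(c)a)$ and the multiplicativity $\boldsymbol{\pi}(\rho(a)\rho(b))=\boldsymbol{\pi}(\rho(ab))$ on the super-commutative quotient to match the two terms, checking that the product of signs $\epsilon$ comes out opposite. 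Since on $\{1,2,3,4\}$ any two transpositions or the insertion of a common index into the $4$-cycle flip the sign, this bookkeeping closes.

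The main obstacle I anticipate is purely the sign/index bookkeeping in case (c) with $x=\mathbf{t}_{pq}(c)$: one must carefully track how $\epsilon(ijkl)$ changes when an index is replaced (as in $[\mathbf{g}_{ij}(a),\mathbf{t}_{jk}(b)]=\mathbf{g}_{ik}(ab)$) and reconcile it with the Koszul signs $(-1)^{|x||z|}$ and the parities $|\mathbf{g}_{ij}(a)|=|a|+1$, while simultaneously using that $\boldsymbol{\pi}$ identifies $\bar{\cdot}$ with $\rho(\cdot)$ after one multiplication and kills commutators. Everything else — cases (a), (b), and the $\mathbf{f}$- and $\mathbf{h}$-subcases of (c) — collapses immediately because the relevant bracket either vanishes by (STP10)/(STP12) or lands in the radical $\mathfrak{a}$.
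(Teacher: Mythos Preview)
Your reduction is exactly the one the paper uses: split $\mathfrak{stp}_4(R,{}^-)=\mathfrak{a}\oplus\mathfrak{b}$, use that $\mathfrak{a}$ is a subalgebra contained in the radical of $\beta$ and that $[\mathfrak{b},\mathfrak{b}]=0$, and so reduce the cocycle identity to the configuration $x\in\mathfrak{a}$, $y,z\in\mathfrak{b}$. Two remarks, one cosmetic and one substantive.

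First, the claim $[\mathfrak{a},\mathfrak{b}]\subseteq\mathfrak{b}$ is false (for instance $[\mathbf{f}_{ij}(c),\mathbf{g}_{jk}(a)]=\mathbf{t}_{ik}(ca)\in\mathfrak{a}$), but you never actually need it: in your case~(b) the terms $\beta([z,x],y)$ and $\beta([y,z],x)$ vanish simply because $x,y\in\mathfrak{a}$ sit in the radical, regardless of where the bracket lands.

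Second, and this is a genuine gap: your final sentence asserts that the $\mathbf{h}$-subcase of~(c) ``collapses immediately because the relevant bracket \dots lands in the radical $\mathfrak{a}$''. It does not. The element $\mathbf{h}_{rs}(c,c')$ is diagonal, so $[\mathbf{h}_{rs}(c,c'),\mathbf{g}_{ij}(a)]$ is again a combination of $\mathbf{g}$'s and lies in $\mathfrak{b}$, not in $\mathfrak{a}$. Hence $\beta([\mathbf{h}_{rs}(c,c'),\mathbf{g}_{ij}(a)],\mathbf{g}_{kl}(b))$ need not vanish, and you must carry out the same $\epsilon$/sign bookkeeping for $x=\mathbf{h}_{rs}(c,c')$ as you do for $x=\mathbf{t}_{rs}(c)$. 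The paper singles out precisely these two cases (its equations for $\mathbf{t}_{rs}$ and $\mathbf{h}_{rs}$) as the only nontrivial verifications; the $\mathbf{h}$-computation is entirely parallel to the $\mathbf{t}$-computation you sketched, but it is not free.
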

\begin{proof}
We have to show $\beta$ satisfies
\begin{eqnarray}
&\beta(y,x)=-(-1)^{|x||y|}\beta(x,y),&\label{stp4:eq:2cyl1}\\
&(-1)^{|x||z|}\beta([x,y],z)+(-1)^{|y||x|}\beta([y,z],x)+(-1)^{|z||y|}\beta([z,x],y)=0,&\label{stp4:eq:2cyl2}
\end{eqnarray}
for $x,y,z\in\mathfrak{stp}_4(R,{}^-)$.
\medskip

For (\ref{stp4:eq:2cyl1}), it suffices to show
\begin{equation}
\beta(\mathbf{g}_{ij}(a),\mathbf{g}_{kl}(b))=-(-1)^{(1+|a|)(1+|b|)}\beta(\mathbf{g}_{kl}(b),\mathbf{g}_{ij}(a)),\label{stp4:eq:2cyl11}
\end{equation}
for $a,b\in R$, $i\neq j$ and $k\neq l$. By (\ref{eq:p4pi}), we have
$$\boldsymbol{\pi}(ab)=
\boldsymbol{\pi}(\overline{ab})=
(-1)^{|a||b|}\boldsymbol{\pi}(\bar{b}\bar{a})
=(-1)^{|a||b|}\boldsymbol{\pi}(b\bar{a})=\boldsymbol{\pi}(a\bar{b})=\boldsymbol{\pi}(\bar{a}\bar{b})=(-1)^{|a||b|}\boldsymbol{\pi}(ba),$$
which implies (\ref{stp4:eq:2cyl11}).
\medskip

For (\ref{stp4:eq:2cyl2}), since $\mathfrak{a}$ is a Lie sub-superalgebra of $\mathfrak{stp}_4(R,{}^-)$  included in the radical of $\beta$, we know that all the terms of (\ref{stp4:eq:2cyl2}) vanish if at least two of $x$, $y$ and $z$ lie in $\mathfrak{a}$. Moreover, $[\mathfrak{b},\mathfrak{b}]=0$ implies that (\ref{stp4:eq:2cyl2}) is trivial if $x$, $y$ and $z$ are contained in $\mathfrak{b}$. Note also that (\ref{stp4:eq:2cyl2}) is symmetric with respect to all permutations on $\{x,y,z\}$, the proof reduces to the special case where $x\in\mathfrak{a}$ and $y,z\in\mathfrak{b}$.
In this situation, $[y,z]=0$, then (\ref{stp4:eq:2cyl2}) is simplified to
\begin{equation}
\beta([y,x],z)=(-1)^{|x||y|+|y||z|+|z||x|}\beta([z,x],y).\label{eq:stp4_2cy4}
\end{equation}

If $x=\mathbf{f}_{ij}(a)$ or $x=\mathbf{f}_{i}(a)$, then $[x,\mathfrak{b}]\subseteq\mathfrak{a}$ is included in the radical of $\beta$, and hence both sides of (\ref{eq:stp4_2cy4}) are zero. (\ref{eq:stp4_2cy4}) is also trivial if $y=\mathbf{g}_i(a)$ and $z=\mathbf{g}_j(a)$. Now, it remains to verify (\ref{eq:stp4_2cy4}) in the following four cases:
\begin{enumerate}
\item $x=\mathbf{t}_{rs}(c)$, $y=\mathbf{g}_{ij}(a)$ and $z=\mathbf{g}_{kl}(b)$,
\item $x=\mathbf{t}_{rs}(c)$, $y=\mathbf{g}_i(a)$ and $z=\mathbf{g}_{kl}(b)$,
\item $x=\mathbf{h}_{rs}(c,c')$, $y=\mathbf{g}_{ij}(a)$ and $z=\mathbf{g}_{kl}(b)$,
\item $x=\mathbf{h}_{rs}(c,c')$, $y=\mathbf{g}_{i}(a)$ and $z=\mathbf{g}_{kl}(b)$,
\end{enumerate}
where $a,b,c,c'\in R$ and $1\leqslant i,j,k,l,r,s\leqslant4$.

For Case (i), the definition of $\beta$ and (\ref{STP07}) imply that
\begin{align*}
\beta([y,x],z)&=(-1)^{|b|}(\delta_{jr}\epsilon(iskl)-\delta_{ir}\epsilon(jskl))\boldsymbol{\pi}(acb),\\
\beta([z,x],y)&=(-1)^{|a|}(\delta_{lr}\epsilon(ksij)-\delta_{kr}\epsilon(lsij))\boldsymbol{\pi}(bca).
\end{align*}
Since $\boldsymbol{\pi}(abc)=(-1)^{|a||b|+|b||c|+|c||a|}\boldsymbol{\pi}(cba)$ and
$$\delta_{jr}\epsilon(iskl)-\delta_{ir}\epsilon(jskl)
=\delta_{lr}\epsilon(ksij)-\delta_{kr}\epsilon(lsij),$$
we obtain (\ref{eq:stp4_2cy4}) in Case (i). Similar arguments show that (\ref{eq:stp4_2cy4}) also holds in Cases (ii)-(iv).
\end{proof}
\medskip

The $2$-cocycle $\beta:\mathfrak{stp}_4(R,{}^-)\times\mathfrak{stp}_4(R,{}^-)
\rightarrow R/(R_{(-)}\cdot R)$ gives rise to a new Lie superalgebra
$$\widehat{\mathfrak{stp}}_4(R,{}^-):=\mathfrak{stp}_4(R,{}^-)\oplus (R/(R_{(-)}\cdot R)),$$
under the super-bracket
$$[x\oplus c,y\oplus c]:=[x,y]\oplus \beta(x,y),$$
for $x,y\in\mathfrak{stp}_4(R,{}^-)$ and $c,c'\in R/(R_{(-)}\cdot R)$. The canonical projection $\psi'_4:\widehat{\mathfrak{stp}}_4(R,{}^-)\rightarrow\mathfrak{stp}_4(R,{}^-)$ is a central extension. Furthermore, we have

\begin{proposition}
\label{prop:p4_uce}
Let $(R,{}^-)$ be a unital associative superalgebra with superinvolution. Then the central extension $\psi':\widehat{\mathfrak{stp}}_4(R,{}^-)\rightarrow \mathfrak{stp}_4(R,{}^-)$ is universal.
\end{proposition}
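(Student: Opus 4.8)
The plan is to invoke the standard criterion for universality (cf.\ \cite{Neher2003}): a central extension of a perfect Lie superalgebra is universal precisely when its total space is perfect and \emph{centrally closed}, i.e.\ every central extension of it splits. So I would reduce the proposition to the two assertions that $\widehat{\mathfrak{stp}}_4(R,{}^-)$ is perfect and that every central extension of it splits; universality of $\psi'$ (and, simultaneously, of the composite $\psi\circ\psi':\widehat{\mathfrak{stp}}_4(R,{}^-)\rightarrow\mathfrak{p}_4(R,{}^-)$, which is a central extension because composites of central extensions with perfect total space are central) then follows. Perfectness is immediate: since $\mathfrak{stp}_4(R,{}^-)$ is perfect and $\psi'$ is surjective, $[\widehat{\mathfrak{stp}}_4(R,{}^-),\widehat{\mathfrak{stp}}_4(R,{}^-)]$ maps onto $\mathfrak{stp}_4(R,{}^-)$, while the central summand $R/(R_{(-)}\cdot R)$ already lies in the derived subalgebra because $[\mathbf{g}_{12}(a),\mathbf{g}_{34}(1)]=\beta(\mathbf{g}_{12}(a),\mathbf{g}_{34}(1))=\boldsymbol{\pi}(a)$ holds in $\widehat{\mathfrak{stp}}_4(R,{}^-)$, using (\ref{STP10}) and $\epsilon(1234)=1$.

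For central closedness, let $q:\mathfrak{F}\rightarrow\widehat{\mathfrak{stp}}_4(R,{}^-)$ be a central extension; replacing $\mathfrak{F}$ by a suitable term of its derived series (which still surjects onto $\widehat{\mathfrak{stp}}_4(R,{}^-)$ with central kernel, and through which any splitting factors) we may assume $\mathfrak{F}$ is perfect, so that $\Phi:=\psi\circ\psi'\circ q:\mathfrak{F}\rightarrow\mathfrak{p}_4(R,{}^-)$ is a central extension of $\mathfrak{p}_4(R,{}^-)$. Lemmas~\ref{lem:ele_tilde} and~\ref{lem:tilde_rln} then furnish elements $\tilde{t}_{ij}(a),\tilde{f}_{ij}(a),\tilde{g}_{ij}(a)\in\mathfrak{F}$ ($a\in R$, $1\leqslant i\neq j\leqslant4$) satisfying all of (\ref{STP00})--(\ref{STP12}) except possibly (\ref{STP10c}); note that (\ref{STP04c}) \emph{does} hold when $m=4$, by running the argument from the proof of Proposition~\ref{thm:pm_uce} with the (unique) index outside $\{i,j,k\}$, which is available already for $m=4$. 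For distinct $i,j,k,l$ (necessarily a permutation of $\{1,2,3,4\}$) the bracket $[\tilde{g}_{ij}(a),\tilde{g}_{kl}(b)]$ lies in $\ker\Phi\subseteq Z(\mathfrak{F})$, and the crux of the proof is to establish, from the surviving relations (\ref{STP02}), (\ref{STP03}), (\ref{STP07}), (\ref{STP08}), (\ref{STP10a}), (\ref{STP10b}) and the Jacobi identity alone, that this central element depends on $(a,b)$ only through $\boldsymbol{\pi}(a\cdot\rho(b))$ and scales by $\epsilon(ijkl)$ under permuting the indices: that is, that there is a $\Bbbk$-linear map $\bar{c}:R/(R_{(-)}\cdot R)\rightarrow Z(\mathfrak{F})$ with $[\tilde{g}_{ij}(a),\tilde{g}_{kl}(b)]=\epsilon(ijkl)\,\bar{c}(\boldsymbol{\pi}(a\cdot\rho(b)))$.

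With $\bar{c}$ at hand, the assignment $\mathbf{t}_{ij}(a)\mapsto\tilde{t}_{ij}(a)$, $\mathbf{f}_{ij}(a)\mapsto\tilde{f}_{ij}(a)$, $\mathbf{g}_{ij}(a)\mapsto\tilde{g}_{ij}(a)$, $\boldsymbol{\pi}(a)\mapsto\bar{c}(\boldsymbol{\pi}(a))$ respects a presentation of $\widehat{\mathfrak{stp}}_4(R,{}^-)$ --- the one obtained from Definition~\ref{def:stp} by keeping (\ref{STP00})--(\ref{STP09}), (\ref{STP11}), (\ref{STP12}), (\ref{STP10a}), (\ref{STP10b}) and replacing the remainder of (\ref{STP10}) by $[\mathbf{g}_{ij}(a),\mathbf{g}_{kl}(b)]=\epsilon(ijkl)\boldsymbol{\pi}(a\cdot\rho(b))$ for distinct $i,j,k,l$ (with $\boldsymbol{\pi}$ central, $\Bbbk$-linear and subject to $\boldsymbol{\pi}(\bar{a}b)=\boldsymbol{\pi}(\rho(a)b)$) --- since the listed relations are already satisfied by the $\tilde{}$-elements and the modified bracket is matched precisely by the identity of the previous paragraph. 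Hence it extends to a homomorphism $s:\widehat{\mathfrak{stp}}_4(R,{}^-)\rightarrow\mathfrak{F}$. Tracing definitions and using that $\beta$ vanishes on $\mathfrak{a}$ (so that $[\mathbf{f}_{ik}(a),\mathbf{g}_{kj}(1)]=\mathbf{t}_{ij}(a)$, $[\mathbf{t}_{ik}(1),\mathbf{f}_{kj}(a)]=\mathbf{f}_{ij}(a)$, $[\mathbf{g}_{ik}(a),\mathbf{t}_{kj}(1)]=\mathbf{g}_{ij}(a)$ in $\widehat{\mathfrak{stp}}_4(R,{}^-)$), one checks that $q\circ s$ fixes every generator of $\widehat{\mathfrak{stp}}_4(R,{}^-)$, whence $q\circ s=\mathrm{id}$ and $q$ splits; uniqueness of the splitting is automatic from perfectness. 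Therefore $\widehat{\mathfrak{stp}}_4(R,{}^-)$ is centrally closed, and $\psi'$ is universal.

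The principal obstacle is the crux identity of the second paragraph. Compared with the case $m\geqslant5$ one has strictly fewer relations available --- (\ref{STP10c}), and hence the full force of (\ref{STP10}), is lost --- so one must squeeze enough out of the surviving relations to pin down the central element $[\tilde{g}_{ij}(a),\tilde{g}_{kl}(b)]$ as a function of $\boldsymbol{\pi}(a\cdot\rho(b))$ alone. Concretely this means a sequence of Jacobi manipulations: bracketing against suitable $\tilde{t}_{pq}(1)$'s and the elements $\tilde{h}_{pq}$ of (\ref{eq:h_tilde}) to transport scalars between the two entries, to reduce one entry to $1$, to implement the substitution $\bar{a}\leftrightarrow\rho(a)$ via (\ref{STP02}), and to annihilate the right ideal $R_{(-)}\cdot R$ --- the same computational content that underlies the verification in Lemma~\ref{lem:stp4_2cy2} that $\beta$ is a $2$-cocycle.
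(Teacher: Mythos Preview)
Your proposal is correct and its computational heart is the same as the paper's. The paper, however, frames the argument more directly: rather than invoking the ``perfect and centrally closed'' criterion, it shows outright that the composite $\psi\circ\psi':\widehat{\mathfrak{stp}}_4(R,{}^-)\rightarrow\mathfrak{p}_4(R,{}^-)$ is universal, by taking an arbitrary central extension $\varphi:\mathfrak{E}\rightarrow\mathfrak{p}_4(R,{}^-)$ and constructing the required map $\varphi':\widehat{\mathfrak{stp}}_4(R,{}^-)\rightarrow\mathfrak{E}$. This bypasses your reduction step (passing to the derived subalgebra of $\mathfrak{F}$ so that the triple composite $\Phi$ becomes central) entirely, since $\varphi$ is already assumed central. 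The crux identity you isolate --- that $[\tilde{g}_{ij}(a),\tilde{g}_{kl}(b)]$ depends only on $\epsilon(ijkl)\boldsymbol{\pi}(a\cdot\rho(b))$ --- is exactly what the paper proves: it sets $\tilde{\pi}(a):=[\tilde{g}_{12}(a),\tilde{g}_{34}(1)]$ and carries out an explicit chain of Jacobi manipulations (rewriting $\tilde{g}_{12}(ab)$ via $\tilde{g}_{13},\tilde{t}_{32}$, then via $\tilde{g}_{14},\tilde{t}_{43}$, etc.) to obtain $\tilde{\pi}(ab)=(-1)^{|a|}\tilde{\pi}(\bar{a}b)$, whence $\tilde{\pi}(R_{(-)}\cdot R)=0$. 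Your approach and the paper's buy the same thing; the paper's is marginally shorter because it avoids the detour through central closedness, while yours makes transparent why the cocycle $\beta$ is exactly what is needed to absorb the failure of (\ref{STP10c}).
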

\begin{proof}
Since $\psi:\mathfrak{stp}_4(R,{}^-)\rightarrow\mathfrak{p}_4(R,{}^-)$ is a central extension, it suffices to show that the central extension $\psi\circ\psi':\widehat{\mathfrak{stp}}_4(R,{}^-)\rightarrow\mathfrak{p}_4(R,{}^-)$ is universal. Let $\varphi:\mathfrak{E}\rightarrow\mathfrak{p}_4(R,{}^-)$ be an arbitrary central extension of $\mathfrak{p}_4(R,{}^-)$. We have to show that there exists a unique homomorphism $\varphi':\widehat{\mathfrak{stp}}_4(R,{}^-)\rightarrow\mathfrak{E}$ such that $\varphi\circ\varphi'=\psi\circ\psi'$.

We first prove the existence of $\varphi'$. Take $\tilde{t}_{ij}(a)$, $\tilde{f}_{ij}(a)$ and $\tilde{g}_{ij}(a)\in\mathfrak{E}$ as in (\ref{eq:tilde}). By Lemma~\ref{lem:tilde_rln} and the same arguments as in Theorem~\ref{thm:pm_uce}, these elements satisfy (\ref{STP00})-(\ref{STP12}) except (\ref{STP10}). Instead of (\ref{STP10}), we will show that
\begin{equation}
[\tilde{g}_{ij}(a),\tilde{g}_{kl}(b)]=\epsilon(ijkl)\tilde{\pi}(a\cdot\rho(b)),\text{ for }i\neq j\text{ and }k\neq l,\label{eq:p4rln10}
\end{equation}
where
$$\tilde{\pi}(a):=[\tilde{g}_{12}(a),\tilde{g}_{34}(1)]$$
is central in $\mathfrak{E}$ since $\varphi(\tilde{\pi}(a))=0$. 

If $(i,j,k,l)=(1,3,4,2)$, we have $\tilde{g}_{42}(b)=[\tilde{g}_{43}(1),\tilde{t}_{32}(b)]\pmod{\ker\varphi}
$. Then the Jacobi identity implies
$$[\tilde{g}_{13}(a),\tilde{g}_{42}(b)]
=[[\tilde{g}_{13}(a),\tilde{g}_{43}(1)],\tilde{t}_{32}(b)]
+(-1)^{|a|}[\tilde{g}_{43}(1),[\tilde{g}_{13}(a),\tilde{t}_{32}(b)]].$$
Since $[\tilde{g}_{13}(a),\tilde{g}_{43}(1)]\in\ker\varphi$ and $\tilde{g}_{12}(ab)=[\tilde{g}_{13}(1),\tilde{t}_{32}(b)]\pmod{\ker\varphi}$, we conclude that
$$[\tilde{g}_{13}(a),\tilde{g}_{42}(b)]
=(-1)^{|a|}[\tilde{g}_{43}(1),\tilde{g}_{12}(ab)]=(-1)^{|b|}[\tilde{g}_{12}(ab),\tilde{g}_{34}(1)]=\tilde{\pi}(a\cdot\rho(b)).$$
For other choices of $(i,j,k,l)$ such that $i,j,k,l$ are pairwise distinct, (\ref{eq:p4rln10}) is also checked similarly. Combining with (\ref{STP10a}) and (\ref{STP10b}), we obtain (\ref{eq:p4rln10}).

Moreover, we deduce that $\tilde{\pi}(R_{(-)}\cdot R)=0$ by  the following computation
\begin{align*}
\tilde{\pi}(ab)&=[\tilde{g}_{12}(ab),\tilde{g}_{34}(1)]
=[[\tilde{g}_{13}(a),\tilde{t}_{32}(b)],\tilde{g}_{34}(1)]\\
&=[\tilde{g}_{13}(a),[\tilde{t}_{32}(b),\tilde{g}_{34}(1)]]
=-[\tilde{g}_{13}(a),\tilde{g}_{24}(\bar{b})]\\
&=-[[\tilde{g}_{14}(1),\tilde{t}_{43}(a)],\tilde{g}_{24}(\bar{b})]
=-[\tilde{g}_{14}(1),[\tilde{t}_{43}(a),\tilde{g}_{24}(\bar{b})]]\\
&=(-1)^{|a|(1+|b|)}[\tilde{g}_{14}(1),\tilde{g}_{23}(\bar{b}a)]
=(-1)^{|a|(1+|b|)}[\tilde{g}_{14}(1),[\tilde{g}_{21}(\bar{b}a),\tilde{t}_{13}(1)]]\\
&=(-1)^{|a|(1+|b|)}(-1)^{1+|a|+|b|}[\tilde{g}_{21}(\bar{b}a),[\tilde{g}_{14}(1),\tilde{t}_{13}(1)]]
=-(-1)^{|b|+|a||b|}[\tilde{g}_{21}(\bar{b}a),\tilde{g}_{34}(1)]\\
&=(-1)^{|a|}[\tilde{g}_{12}(\bar{a}b),\tilde{g}_{34}(1)]
=(-1)^{|a|}\tilde{\pi}(\bar{a}b).
\end{align*}

Therefore, there is a homomorphism
$\varphi':\widehat{\mathfrak{stp}}_4(R,{}^-)\rightarrow\mathfrak{E}$ such that
\begin{align*}
\varphi'(\mathbf{t}_{ij}(a)\oplus0)&=\tilde{t}_{ij}(a),&
\varphi'(\mathbf{f}_{ij}(a)\oplus0)&=\tilde{f}_{ij}(a),\\
\varphi'(\mathbf{g}_{ij}(a)\oplus0)&=\tilde{g}_{ij}(a),&
\varphi'(0\oplus \boldsymbol{\pi}(a))&=\tilde{\pi}(a),
\end{align*}
for $a\in R$ and $1\leqslant i\neq j\leqslant4$, i.e., $\psi\circ\psi'=\varphi\circ\varphi'$. The uniqueness of $\varphi'$ follows from a similar argument as in the proof of Proposition~\ref{thm:pm_uce}. 
\end{proof}

Since the second homology of $\mathfrak{p}_m(R,{}^-)$ is isomorphic to the kernel of its universal central extension, we conclude that

\begin{theorem}
\label{thm:p4_hml}
Let $(R,{}^-)$ be a unital associative superalgebra with superinvolution. Then
$$\mathrm{H}_2(\mathfrak{p}_4(R,{}^-))=\fourIdx{}{+}{}{1}{\mathrm{HD}}(R,{}^-\circ\rho)\oplus R/(R_{(-)}\cdot R).\qed$$
\end{theorem}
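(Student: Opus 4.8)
The plan is to obtain the statement by gluing together Propositions~\ref{prop:stp_ct} and~\ref{prop:p4_uce} with the standard identification, recalled in Section~\ref{sec:intr} (c.f. \cite{Neher2003,ScheunertZhang1998}), of the second homology of a perfect Lie superalgebra with the kernel of its universal central extension. Since $\mathfrak{p}_4(R,{}^-)$ is perfect by Proposition~\ref{prop:p_pft}(iii), the first step is to note that $\mathrm{H}_2(\mathfrak{p}_4(R,{}^-))\cong\ker\pi$ for any universal central extension $\pi$ of $\mathfrak{p}_4(R,{}^-)$.

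Next I would invoke Propositions~\ref{prop:stp_ce} and~\ref{prop:p4_uce}: the proof of the latter exhibits the composite $\psi\circ\psi':\widehat{\mathfrak{stp}}_4(R,{}^-)\to\mathfrak{p}_4(R,{}^-)$ as a universal central extension of $\mathfrak{p}_4(R,{}^-)$. Therefore $\mathrm{H}_2(\mathfrak{p}_4(R,{}^-))\cong\ker(\psi\circ\psi')$ as $\Bbbk$-modules. To compute this kernel I would use that, by construction, $\widehat{\mathfrak{stp}}_4(R,{}^-)=\mathfrak{stp}_4(R,{}^-)\oplus\bigl(R/(R_{(-)}\cdot R)\bigr)$ as $\Bbbk$-modules with $(\psi\circ\psi')(x\oplus c)=\psi(x)$; hence $\ker(\psi\circ\psi')=\ker\psi\oplus\bigl(R/(R_{(-)}\cdot R)\bigr)$. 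Finally, Proposition~\ref{prop:stp_ct} supplies $\ker\psi\cong\fourIdx{}{+}{}{1}{\mathrm{HD}}(R,{}^-\circ\rho)$, and combining the three isomorphisms yields the claimed decomposition.

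The genuine difficulty is not in this assembly — which is pure bookkeeping once the pieces are in place — but upstream, inside Proposition~\ref{prop:p4_uce}: one must verify that the canonically defined elements $\tilde t_{ij}(a)$ in an arbitrary central extension $\mathfrak{E}$ of $\mathfrak{p}_4(R,{}^-)$ satisfy the residual relation (\ref{STP04c}) (using a fourth index and the central element $\tilde h_{lj}$), and, crucially, that the central element $\tilde\pi(a):=[\tilde g_{12}(a),\tilde g_{34}(1)]$ annihilates $R_{(-)}\cdot R$, which comes from the chain of Jacobi-identity manipulations establishing $\tilde\pi(ab)=(-1)^{|a|}\tilde\pi(\bar a b)$. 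I would also add a brief sanity check: unlike the $m\geqslant 5$ case, the extra summand $R/(R_{(-)}\cdot R)$ can be nonzero even for super-commutative $R$, which is precisely why $\psi$ alone fails to be universal when $m=4$ and the $2$-cocycle $\beta$ of Lemma~\ref{lem:stp4_2cy2} is needed.
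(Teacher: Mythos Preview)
Your proposal is correct and follows exactly the approach of the paper: the paper's own proof is the single line ``Using Propositions~\ref{prop:stp_ce} and~\ref{prop:p4_uce}, we conclude that\ldots'' with a $\qed$, and you have simply unpacked this assembly (together with Proposition~\ref{prop:stp_ct} for $\ker\psi$) in the natural way. Your added commentary on where the real work lies (inside Proposition~\ref{prop:p4_uce}) is accurate and matches the paper's argument there.
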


\begin{remark}
\label{rmk:p4_hml_com}
If $R$ is super-commutative, then $R_{(-)}=0$. In this situation,
$$\mathrm{H}_2(\mathfrak{p}_4(\Bbbk)\otimes_{\Bbbk}R)\cong
\mathrm{H}_2(\mathfrak{p}_4(R,\rho))
\cong \fourIdx{}{+}{}{1}{\mathrm{HD}}(R,\mathrm{id})\oplus R\cong
R,$$
which coincides with the second homology of $\mathfrak{p}_4(\Bbbk)\otimes_{\Bbbk}R$ obtained in \cite{IoharaKoga2005}.
\end{remark}
\medskip

In the special case where $(R,{}^-)=(S\oplus S^{\mathrm{op}},\mathrm{ex})$, Theorem~\ref{thm:p4_hml} recovers the second homology of $\mathfrak{sl}_{4|4}(S)$ given in \cite{ChenSun2015}.

\begin{corollary}
\label{cor:sl44}
Let $S$ be an arbitrary unital associative superalgebra. Then
$$\mathrm{H}_2(\mathfrak{sl}_{4|4}(S))=\mathrm{HC}_1(S).$$
\end{corollary}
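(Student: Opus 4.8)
The plan is to deduce this corollary from Theorem~\ref{thm:p4_hml} in exactly the way Corollary~\ref{cor:slmm} was deduced from Theorem~\ref{thm:hml_pm}. First I would invoke Example~\ref{ex:iso_p_SS} with $m=4$ to identify $\mathfrak{sl}_{4|4}(S)$ with $\mathfrak{p}_4(S\oplus S^{\mathrm{op}},\mathrm{ex})$ as Lie superalgebras over $\Bbbk$, so that $\mathrm{H}_2(\mathfrak{sl}_{4|4}(S))\cong\mathrm{H}_2(\mathfrak{p}_4(S\oplus S^{\mathrm{op}},\mathrm{ex}))$. Applying Theorem~\ref{thm:p4_hml} to the pair $(R,{}^-)=(S\oplus S^{\mathrm{op}},\mathrm{ex})$ then yields
$$\mathrm{H}_2(\mathfrak{sl}_{4|4}(S))\cong\fourIdx{}{+}{}{1}{\mathrm{HD}}(S\oplus S^{\mathrm{op}},\mathrm{ex}\circ\rho)\oplus \big(S\oplus S^{\mathrm{op}}\big)\big/\big((S\oplus S^{\mathrm{op}})_{(-)}\cdot(S\oplus S^{\mathrm{op}})\big),$$
so the task reduces to identifying the first summand with $\mathrm{HC}_1(S)$ and showing that the second summand vanishes.

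For the first summand I would simply reuse the computation performed in the proof of Corollary~\ref{cor:slmm}: it produces an explicit isomorphism $\fourIdx{}{+}{}{1}{\mathrm{HD}}(S\oplus S^{\mathrm{op}},\mathrm{ex}\circ\rho)\cong\mathrm{HC}_1(S)$ by first showing $\langle a_1\oplus a_2,b_1\oplus b_2\rangle=\langle a_1\oplus0,b_1\oplus0\rangle+\langle a_2\oplus0,b_2\oplus0\rangle$ and then matching $\langle S\oplus S^{\mathrm{op}},S\oplus S^{\mathrm{op}}\rangle$ with $\langle S,S\rangle_{\mathsf c}$. That argument is a statement purely about the coefficient module $\langle R,R\rangle$ and the dihedral relations; it never uses the hypothesis $m\geqslant5$, so it applies verbatim here.

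It remains to check that $(S\oplus S^{\mathrm{op}})_{(-)}\cdot(S\oplus S^{\mathrm{op}})=S\oplus S^{\mathrm{op}}$, which is the only genuinely new input and is immediate. I would take the element $1_S\oplus0\in S\oplus S^{\mathrm{op}}$: since $\overline{1_S\oplus0}=0\oplus1_S$ and $\rho(1_S\oplus0)=1_S\oplus0$, the difference $\overline{1_S\oplus0}-\rho(1_S\oplus0)=(-1_S)\oplus1_S$ lies in the right ideal $(S\oplus S^{\mathrm{op}})_{(-)}\cdot(S\oplus S^{\mathrm{op}})$. Multiplying on the right by an arbitrary $a\oplus b$ and using $1_S\overset{\mathrm{op}}{\cdot}b=b$ gives $\big((-1_S)\oplus1_S\big)(a\oplus b)=(-a)\oplus b$, and these elements already span $S\oplus S^{\mathrm{op}}$ as $a,b$ range over $S$. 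Hence that right ideal is all of $S\oplus S^{\mathrm{op}}$, the second summand is $0$, and combining with the previous paragraph we obtain $\mathrm{H}_2(\mathfrak{sl}_{4|4}(S))\cong\mathrm{HC}_1(S)$. There is no real obstacle in this proof; the only point requiring a moment's care is the sign and parity bookkeeping in the opposite-superalgebra product $\overset{\mathrm{op}}{\cdot}$ when verifying that $(-1_S)\oplus1_S$ generates the whole algebra as a right ideal.
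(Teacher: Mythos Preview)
Your proposal is correct and follows essentially the same route as the paper's own proof: invoke Example~\ref{ex:iso_p_SS}, apply Theorem~\ref{thm:p4_hml}, kill the extra summand by observing that $R_{(-)}$ contains the unit $1\oplus(-1)$ (equivalently, your $(-1_S)\oplus1_S$), and quote the dihedral-to-cyclic identification from Corollary~\ref{cor:slmm}. The paper phrases the vanishing step slightly more briskly by noting that $1\oplus(-1)$ is itself invertible, but your spanning argument is equivalent.
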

\begin{proof}
It is known from Example~\ref{ex:iso_p_SS} that $\mathfrak{sl}_{4|4}(S)$ is isomorphic to $\mathfrak{p}_4(S\oplus S^{\mathrm{op}},\mathrm{ex})$. Hence,
$$\mathrm{H}_2(\mathfrak{sl}_{4|4}(S))
\cong\mathrm{H}_2(\mathfrak{p}_4(S\oplus S^{\mathrm{op}},\mathrm{ex}))
\cong\fourIdx{}{+}{}{1}{\mathrm{HD}}(S\oplus S^{\mathrm{op}},\mathrm{ex}\circ\rho)\oplus R/(R_{(-)}\cdot R),$$
where $(R,{}^-)=(S\oplus S^{\mathrm{op}},\mathrm{ex})$. Moreover, $R/(R_{(-)}\cdot R)=0$ since $R_{(-)}$ contains a unit element $1\oplus(-1)$. It follows that $\mathrm{H}_2(\mathfrak{sl}_{4|4}(S))$ is isomorphic to 
$\fourIdx{}{+}{}{1}{\mathrm{HD}}(S\oplus S^{\mathrm{op}},\mathrm{ex}\circ\rho)$, which is further identified with
$\mathrm{HC}_1(S)$ by Corollary~\ref{cor:slmm}.
\end{proof}

\section{Exceptional Case II: Second Homology of $\mathfrak{p}_3(R,{}^-)$}
\label{sec:p3}

Similar to the exceptional case of $\mathfrak{p}_4(R,{}^-)$, the central extension $\psi:\mathfrak{stp}_3(R,{}^-)\rightarrow\mathfrak{p}_3(R,{}^-)$ is not necessarily universal. The key ingredient of this section is an explicit construction of the universal central extension of $\mathfrak{stp}_3(R,{}^-)$, which involves a $2$-cocycle taking values in the $\Bbbk$-module
$$\mathfrak{z}:=\frac{R}{3R+R_{(-)}\cdot R}\oplus\frac{R}{3R+R_{(-)}\cdot R}\oplus\frac{R}{3R+R_{(-)}\cdot R},$$
where $R_{(-)}\cdot R$ is the right ideal of $R$ generated by $\bar{a}-\rho(a)$ for $a\in R$.

According to Lemma~\ref{prop:stp_tridec}, $\mathfrak{stp}_3(R,{}^-)$ is spanned as a $\Bbbk$--module by
$$\mathfrak{B}:=\{\mathbf{h}_{ij}(a,b),\mathbf{t}_{ij}(a),\mathbf{f}_{ij}(a),\mathbf{g}_{ij}(a),
\mathbf{f}_k(a),
\mathbf{g}_k(a)|a,b\in R, 1\leqslant i,j,k\leqslant 3\text{ with }i\neq j\}.$$
We define a $\Bbbk$--bilinear map $\beta:\mathfrak{stp}_3(R,{}^-)\times\mathfrak{stp}_3(R,{}^-)\rightarrow\mathfrak{z}$ as follows:
\begin{align*}
\beta(\mathbf{t}_{ij}(a),\mathbf{t}_{ik}(b))
&=\epsilon(ijk)\boldsymbol{\pi}_i(ab),&\text{ for }a,b\in R,\text{ and pairwise distinct }i,j,k,\\
\beta(\mathbf{f}_i(a),\mathbf{g}_{jk}(b))
&=\epsilon(ijk)\boldsymbol{\pi}_i(ab),&\text{ for }a,b\in R,\text{ and pairwise distinct }i,j,k,\\
\beta(\mathbf{g}_{jk}(b),\mathbf{f}_i(a))
&=-(-1)^{(1+|a|)(1+|b|)}\epsilon(ijk)\boldsymbol{\pi}_i(ab),&\text{ for }a,b\in R,\text{ and pairwise distinct }i,j,k,\\
\beta(x,y)&=0,&\text{for other pairs }(x,y)\in\mathfrak{B}\times\mathfrak{B},
\end{align*}
where $\epsilon(ijk)$ is the sign of the permutation $(ijk)$ and $\boldsymbol{\pi}_i:R\rightarrow R/(3R+R_{(-)}\cdot R)$ denotes the canonical quotient of $R$ onto the $i$-th direct summand of $\mathfrak{z}$ for $i=1,2,3$.

\begin{lemma}
\label{lem:stp3_2cyl}
The well-defined $\Bbbk$--bilinear map $\beta:\mathfrak{stp}_m(R,{}^-)\times\mathfrak{stp}_m(R,{}^-)\rightarrow\mathfrak{z}$ is a $2$-cocycle.
\end{lemma}
\begin{proof}
The well-definedness of $\beta$ is clear since
$$\mathbf{f}_i(\bar{a})=\mathbf{f}_i(\rho(a)),\quad
\mathbf{f}_{ij}(\bar{a})=\mathbf{f}_{ji}(\rho(a)),\quad
\mathbf{g}_{i}(\bar{a})=-\mathbf{g}_i(\rho(a)),\quad
\mathbf{g}_{ij}(\bar{a})=-\mathbf{g}_{ji}(\rho(a)),$$ 
for $a\in R$ and $1\leqslant i\neq j\leqslant 3$ and $\boldsymbol{\pi}_i(\bar{a}b)=\boldsymbol{\pi}_i(\rho(a)b)$ for $a,b\in R$ and $i=1,2,3$.

As in the proof of Lemma~\ref{lem:stp4_2cy2}, we deduce that $\boldsymbol{\pi}_i(ab)=(-1)^{|a||b|}\boldsymbol{\pi}_i(ba)$ for $i=1,2,3$, which yields that the $\Bbbk$-bilinear map $\beta$ satisfies
\begin{equation*}
\beta(x,y)=-(-1)^{|x||y|}\beta(y,x),\quad x,y\in\mathfrak{stp}_3(R,{}^-).
\end{equation*}
It remains to show
\begin{equation}
J(x,y,z):=(-1)^{|x||z|}\beta([x,y],z)
+(-1)^{|y||x|}\beta([y,z],x)
+(-1)^{|z||y|}\beta([z,x],y)
=0,\label{eq:stp3_2cyl2}
\end{equation}
for $x,y,z\in\mathfrak{stp}_3(R,{}^-)$.  Since (\ref{eq:stp3_2cyl2}) is symmetric under all permutations of $\{x,y,z\}$, we may assume $\beta([x,y],z)\neq0$, which only occurs when $z=\mathbf{t}_{ik}(a)$, $z=\mathbf{g}_{jk}(a)$, or $z=\mathbf{f}_i(a)$.

If $z=\mathbf{t}_{ik}(a)$ for $a\in R$ and $1\leqslant i\neq k\leqslant 3$, we have to verify  that $J(x,y,\mathbf{t}_{ik}(a))=0$ for all pairs $(x,y)\in\mathfrak{B}\times\mathfrak{B}$ such that $\beta([x,y],\mathbf{t}_{ik}(a))\neq0$, which occurs only when $(x,y)\in\mathfrak{B}\times\mathfrak{B}$ is one of the following pairs:
$$(\mathbf{h}_{ij}(a,a'), \mathbf{t}_{ij}(b)), 
(\mathbf{h}_{ik}(a,a'), \mathbf{t}_{ij}(b)),
(\mathbf{t}_{ik}(b), \mathbf{t}_{kj}(c)),
(\mathbf{f}_{ik}(b), \mathbf{g}_{kj}(c)),
(\mathbf{f}_i(b), \mathbf{g}_{ij}(c)),
(\mathbf{f}_{ij}(b), \mathbf{g}_{j}(c)),$$
where $a,a', b,c\in R$ and $j$ is the unique element in $\{1,2,3\}\backslash\{i,k\}$. The verification is straightforward. We omit the tedious details here. 

Similarly, $J(x,y,z)=0$ when $z=\mathbf{g}_{jk}(a)$ or $z=\mathbf{f}_i(a)$.
\end{proof}
\bigskip

The $2$-cocycle $\beta:\mathfrak{stp}_3(R,{}^-)\times\mathfrak{stp}_3(R,{}^-)\rightarrow\mathfrak{z}$ determines a central extension
$$\psi'_3:\mathfrak{stp}_3(R,{}^-)\oplus\mathfrak{z}\rightarrow\mathfrak{stp}_3(R,{}^-),$$
where $\psi'_3$ is the canonical projection and the super-bracket on $\mathfrak{stp}_3(R,{}^-)\oplus\mathfrak{z}$ is given by
$$[x\oplus c,y\oplus c']=[x,y]\oplus \beta(x,y),\quad x,y\in\mathfrak{stp}_3(R,{}^-),\text{ and }c,c'\in\mathfrak{z}.$$
Moreover, we have the following proposition.

\begin{proposition}
\label{prop:p3_uce}
The central extension $\psi'_3:\mathfrak{stp}_3(R,{}^-)\oplus\mathfrak{z}\rightarrow\mathfrak{stp}_3(R,{}^-)$ is universal.
\end{proposition}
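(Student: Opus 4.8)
The plan is to mimic the strategy used for $\mathfrak{p}_4$ in Proposition~\ref{prop:p4_uce}, namely to show that the composite $\psi\circ\psi'_3:\mathfrak{stp}_3(R,{}^-)\oplus\mathfrak{z}\rightarrow\mathfrak{p}_3(R,{}^-)$ is a universal central extension. Since $\psi'_3$ and $\psi$ (Proposition~\ref{prop:stp_ce}) are both central extensions, so is their composite, and universality of the composite forces universality of $\psi'_3$: indeed if $\psi'_3$ were not universal, lifting the universal central extension of $\mathfrak{p}_3(R,{}^-)$ through $\psi$ would contradict minimality. So the whole content is: given an arbitrary central extension $\varphi:\mathfrak{E}\rightarrow\mathfrak{p}_3(R,{}^-)$, produce a (necessarily unique) lift $\varphi':\mathfrak{stp}_3(R,{}^-)\oplus\mathfrak{z}\rightarrow\mathfrak{E}$ with $\varphi\circ\varphi'=\psi\circ\psi'_3$.

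First I would choose representatives $\hat t_{ij}(a),\hat f_{ij}(a),\hat g_{ij}(a)\in\mathfrak{E}$ and form the elements $\tilde t_{ij}(a),\tilde f_{ij}(a),\tilde g_{ij}(a)$ as in (\ref{eq:tilde}), which by Lemma~\ref{lem:tilde_rln} already satisfy all of (\ref{STP00})--(\ref{STP12}) except possibly (\ref{STP04}) and (\ref{STP10}), and even satisfy the partial versions (\ref{STP04a}), (\ref{STP04b}), (\ref{STP10a}), (\ref{STP10b}). For $m=3$ one cannot upgrade these to the full relations (there is no spare index), and this is precisely where the central values in $\mathfrak{z}$ enter. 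I would define, for $a\in R$ and each $i$ with $\{i,j,k\}=\{1,2,3\}$, the central elements
$$\tilde\pi_i(a):=\epsilon(ijk)[\tilde t_{ij}(a),\tilde t_{ik}(1)]\in\ker\varphi,$$
and similarly check that $[\tilde f_i(a),\tilde g_{jk}(b)]$ produces the same central element $\tilde\pi_i(ab)$ (here $\tilde f_i(a):=[\tilde t_{ij}(1),\tilde f_{ji}(a)]$, $\tilde g_i(a):=[\tilde g_{ij}(a),\tilde t_{ji}(1)]$, mirroring (\ref{eq:mui})--(\ref{eq:nui})). The key computation, analogous to the chain of identities in Proposition~\ref{prop:p4_uce} establishing $\tilde\pi(R_{(-)}\cdot R)=0$, is to show $\tilde\pi_i(R_{(-)}\cdot R)=0$ and $\tilde\pi_i(3R)=0$, so that $\tilde\pi_i$ descends to a $\Bbbk$-linear map $R/(3R+R_{(-)}\cdot R)\rightarrow\ker\varphi$. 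The vanishing on $R_{(-)}\cdot R$ is a Jacobi-identity manipulation using the relations among $\tilde t,\tilde f,\tilde g$ (essentially (\ref{STP01}), (\ref{STP02}), (\ref{STP03}), (\ref{STP07}), (\ref{STP11})); the factor $3$ arises because in the $m=3$ case the Jacobi identity relating $[\tilde t_{12},\tilde t_{13}]$, $[\tilde t_{23},\tilde t_{21}]$, $[\tilde t_{31},\tilde t_{32}]$ closes up on itself and forces $\tilde\pi_1(a)+\tilde\pi_2(a)+\tilde\pi_3(a)$-type sums to be annihilated, and cyclic symmetry then yields $3\tilde\pi_i(a)=0$ modulo the already-killed part.

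With the three maps $R/(3R+R_{(-)}\cdot R)\rightarrow\ker\varphi$ in hand, I would assemble a single map $\mathfrak{z}\rightarrow\ker\varphi$, and then check that the elements $\tilde t_{ij}(a),\tilde f_{ij}(a),\tilde g_{ij}(a)$ together with these central elements satisfy exactly the bracket relations defining $\mathfrak{stp}_3(R,{}^-)\oplus\mathfrak{z}$ with its $\beta$-twisted bracket — i.e. that the ``defect'' in (\ref{STP04}) and (\ref{STP10}) (and in the mixed brackets $[\tilde f_i,\tilde g_{jk}]$) is recorded precisely by $\beta$ as defined before Lemma~\ref{lem:stp3_2cyl}. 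This gives a homomorphism $\varphi':\mathfrak{stp}_3(R,{}^-)\oplus\mathfrak{z}\rightarrow\mathfrak{E}$ sending $\mathbf t_{ij}(a)\oplus0\mapsto\tilde t_{ij}(a)$, $\mathbf f_{ij}(a)\oplus0\mapsto\tilde f_{ij}(a)$, $\mathbf g_{ij}(a)\oplus0\mapsto\tilde g_{ij}(a)$, $0\oplus\boldsymbol\pi_i(a)\mapsto\tilde\pi_i(a)$, with $\varphi\circ\varphi'=\psi\circ\psi'_3$. Uniqueness of $\varphi'$ follows as in Proposition~\ref{thm:pm_uce}: any lift must send the generators into the correct $\varphi$-fibres, and expressing $\mathbf t_{ij}(a),\mathbf f_{ij}(a),\mathbf g_{ij}(a)$ as brackets of other generators (using distinct $i,j,k$ within $\{1,2,3\}$) pins the lift down on generators, while $\mathfrak{z}$ is spanned by brackets $[\mathbf t_{ij}(a),\mathbf t_{ik}(1)]$, etc.; since $\mathfrak{stp}_3(R,{}^-)\oplus\mathfrak{z}$ is perfect and generated by these elements, the lift is unique. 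I expect the main obstacle to be the bookkeeping in the $\tilde\pi_i(R_{(-)}\cdot R)=0$ and $\tilde\pi_i(3R)=0$ computations — the analogue of the long displayed chain in Proposition~\ref{prop:p4_uce}, but now complicated by the absence of a fourth index, which is exactly what makes the ``$3$'' and the three-copy structure of $\mathfrak{z}$ appear; getting the signs and the cyclic relations among $\tilde\pi_1,\tilde\pi_2,\tilde\pi_3$ right is the delicate part.
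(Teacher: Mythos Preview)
Your overall strategy matches the paper's: reduce to universality of $\psi\circ\psi'_3$, lift generators into an arbitrary central extension $\mathfrak{E}$ via Lemma~\ref{lem:tilde_rln}, identify the obstruction as central elements $\tilde\pi_i(a)$, show these factor through $R/(3R+R_{(-)}\cdot R)$, and assemble $\varphi'$. Two points deserve correction.

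First, for $m=3$ the relation (\ref{STP10}) does \emph{not} fail. Since there are no four distinct indices in $\{1,2,3\}$, (\ref{STP10c}) is vacuous, and (\ref{STP10a}) together with (\ref{STP10b}) from Lemma~\ref{lem:tilde_rln} already give the full (\ref{STP10}). The only relation that genuinely fails is (\ref{STP04c}), and that alone is the source of the central defect $\tilde\pi_i$. The value $\beta(\mathbf{f}_i(a),\mathbf{g}_{jk}(b))$ is then not an independent defect to be checked: since $\mathbf{f}_i(a)=[\mathbf{t}_{ij}(1),\mathbf{f}_{ji}(a)]$, a Jacobi expansion reduces $[\tilde f_i(a),\tilde g_{jk}(b)]$ to $[\tilde t_{ij}(1),\tilde t_{ik}(\rho(\bar a)b)]=\epsilon(ijk)\tilde\pi_i(ab)$ automatically.

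Second, and more substantively, your proposed mechanism for $\tilde\pi_i(3R)=0$ --- a cyclic Jacobi relation among $\tilde\pi_1,\tilde\pi_2,\tilde\pi_3$ followed by ``cyclic symmetry'' --- does not work: the three $\tilde\pi_i$ land in three \emph{independent} summands of $\mathfrak{z}$, and no Jacobi identity among the $\tilde t$'s produces a linear relation of the shape $\tilde\pi_1+\tilde\pi_2+\tilde\pi_3=0$. The paper's argument is different and cleaner. One takes the diagonal element $\tilde h_{ij}=[\hat f_{ij}(1),\hat g_{ji}(1)]$; since $\tilde\pi_i(a)\in\ker\varphi$ is central, $[\tilde h_{ij},\tilde\pi_i(a)]=0$. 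On the other hand, expanding $[\tilde h_{ij},[\tilde t_{ij}(1),\tilde t_{ik}(a)]]$ via Jacobi and using that $[\tilde h_{ij},\hat t_{ij}(1)]\in\varphi^{-1}(2t_{ij}(1))$ and $[\tilde h_{ij},\hat t_{ik}(a)]\in\varphi^{-1}(t_{ik}(a))$ gives $2\tilde\pi_i(a)+\tilde\pi_i(a)=3\tilde\pi_i(a)$. This weight computation (eigenvalues $2$ and $1$ summing to $3$) is the true origin of the factor $3$. The vanishing on $R_{(-)}\cdot R$, by contrast, \emph{is} a chain of Jacobi manipulations (via (\ref{STP03}), (\ref{STP05}), (\ref{STP07}), (\ref{STP11}), (\ref{STP01})) establishing $\tilde\pi_i(ab)=(-1)^{|a|}\tilde\pi_i(\bar a b)$, as you anticipated.
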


\begin{proof}
It suffices to show that the central extension $\psi\circ\psi'_3:\mathfrak{stp}_3(R,{}^-)\oplus\mathfrak{z}\rightarrow\mathfrak{p}_3(R,{}^-)$ is universal.

Let $\varphi:\mathfrak{E}\rightarrow\mathfrak{p}_3(R,{}^-)$ be an arbitrary central extension of $\mathfrak{p}_3(R,{}^-)$. Pick elements $\tilde{t}_{ij}(a)$, $\tilde{f}_{ij}(a)$, $\tilde{g}_{ij}(a)\in\mathfrak{E}$ with $a\in R$ and  $1\leqslant i\neq j\leqslant3$ as in (\ref{eq:tilde}). By Lemma~\ref{lem:tilde_rln}, they satisfy all relations (\ref{STP00})-(\ref{STP12}) except (\ref{STP04}) and (\ref{STP10}). Moreover, there are no four pairwise distinct indices $1\leqslant i,j,k,l\leqslant3$. Hence, (\ref{STP10a}) and (\ref{STP10b}) imply (\ref{STP10}).

For $i\in\{1,2,3\}$, there exist unique $j$ and $k$ such that $(ijk)$ is an even permutation of $\{1,2,3\}$. Let
\begin{equation*}
\tilde{\pi}_i(a):=[\tilde{t}_{ij}(1),\tilde{t}_{ik}(a)]\in\ker\varphi,
\end{equation*}
for $a\in R$ and $i=1,2,3$. We claim that $\tilde{\pi}_i(3R+R_{(-)}\cdot R)=0$ for $i=1,2,3$. 

Firstly, we show $\tilde{\pi}_i(3R)=0$. Take $\tilde{h}_{ij}=[\hat{f}_{ij}(1),\hat{g}_{ji}(1)]$, then 
$$[\tilde{h}_{ij},\tilde{t}_{ij}(1)]=2\tilde{t}_{ij}(1)\pmod{\ker\varphi},\text{ and }[\tilde{h}_{ij},\tilde{t}_{ik}(a)]=\tilde{t}_{ik}(a)\pmod{\ker\varphi}.$$
Hence, $\tilde{\pi}(a)\in\ker\varphi$ implies that
\begin{align*}
0=[\tilde{h}_{ij},\tilde{\pi}_i(a)]
=[\tilde{h}_{ij},[\tilde{t}_{ij}(1),\tilde{t}_{ik}(a)]]
=3[\tilde{t}_{ij}(1),\tilde{t}_{ik}(a)]
=\tilde{\pi}_i(3a),
\end{align*}
i.e.,  $\tilde{\pi}_i(3R)=0$.

Secondly, we prove that
$\tilde{\pi}_i(ab)=(-1)^{|a|}\tilde{\pi}_i(\bar{a}b)$
for $a,b\in R$ and $i=1,2,3$. Observing that
$$[\tilde{f}_{ij}(a),\tilde{g}_{jk}(b)]=\tilde{t}_{ik}(ab)\pmod{\ker\varphi},\text{ and }[\tilde{t}_{ij}(1),\tilde{g}_{jk}(b)]=0\pmod{\ker\varphi},$$
we have
$$\tilde{\pi}_i(ab)=[\tilde{t}_{ij}(1),\tilde{t}_{ik}(ab)]
=[\tilde{t}_{ij}(1),[\tilde{f}_{ij}(a),\tilde{g}_{jk}(b)]]
=(-1)^{|a|}[[\tilde{t}_{ij}(1),\tilde{f}_{ji}(\bar{a})],\tilde{g}_{jk}(b)].$$
Moreover, $[\tilde{t}_{ij}(1),\tilde{f}_{ji}(\bar{a})]=[\tilde{t}_{ik}(1),\tilde{f}_{kj}(\bar{a})]\pmod{\ker\varphi}$. It follows that
$$\tilde{\pi}_i(ab)=(-1)^{|a|}[[\tilde{t}_{ik}(1),\tilde{f}_{ki}(\bar{a})],\tilde{g}_{jk}(b)]
=-(-1)^{|b|}[\tilde{t}_{ik}(1),\tilde{t}_{ij}(a\bar{b})]$$
Now, $\tilde{t}_{ik}(1)=[\tilde{t}_{ij}(1),\tilde{jk}(1)]\pmod{\ker\varphi}$. We further deduce that
$$\tilde{\pi}_i(ab)=-(-1)^{|b|}[[\tilde{t}_{ij}(1),\tilde{t}_{jk}(1)],\tilde{t}_{ij}(a\bar{b})]
=(-1)^{|b|}[\tilde{t}_{ij}(1),\tilde{t}_{ik}(a\bar{b})]
=(-1)^{|b|}\tilde{\pi}_i(a\bar{b}),$$
which yields
\begin{align*}
\tilde{\pi}_i(ab)
&=(-1)^{|a|+|b|}\tilde{\pi}_i(\overline{ab})
=(-1)^{|a|+|b|+|a||b|}\tilde{\pi}_i(\bar{b}\bar{a})
=(-1)^{|b|+|a||b|}\tilde{\pi}_i(\bar{b}a)\\
&=(-1)^{|a|+|a||b|}\tilde{\pi}_i(\overline{\bar{b}a})
=(-1)^{|a|}\tilde{\pi}_i(\bar{a}b).
\end{align*}

Finally, we show that 
\begin{equation}
[\tilde{t}_{ij}(a),\tilde{t}_{ik}(b)]=\epsilon(ijk)\tilde{\pi}_i(ab),\text{ for }\{i,j,k\}=\{1,2,3\}.\label{eq:p3stp10}
\end{equation}
If the permutation $(ijk)$ is even, then we deduce from $[\tilde{t}_{ij}(1),\tilde{t}_{ij}(a)]=0\pmod{\ker\varphi}$ that
$$[\tilde{t}_{ij}(a),\tilde{t}_{ik}(b)]
=[\tilde{t}_{ij}(a),[\tilde{t}_{ij}(1),\tilde{t}_{jk}(b)]]
=[\tilde{t}_{ij}(1),[\tilde{t}_{ij}(a),\tilde{t}_{jk}(b)]]
=[\tilde{t}_{ij}(1),\tilde{t}_{ik}(ab)]
=\tilde{\pi}_i(ab).$$
If the permutation $(ijk)$ has negative sign, then $(ikj)$ has positive sign and
\begin{align*}
[\tilde{t}_{ij}(a),\tilde{t}_{ik}(b)]
=-(-1)^{|a||b|}[\tilde{t}_{ik}(b),\tilde{t}_{ij}(a)]
=-(-1)^{|a||b|}\tilde{\pi}_i(ba)
=-\tilde{\pi}_i(ab).
\end{align*}
We thus obtain (\ref{eq:p3stp10}).

Therefore, there is a homomorphism of Lie superalgebras $\varphi':\widehat{\mathfrak{stp}}_3(R,{}^-)\rightarrow\mathfrak{E}$ such that
$$\varphi'(\mathbf{t}_{ij}(a))=\tilde{t}_{ij}(a),
\quad\varphi'(\mathbf{f}_{ij}(a))=\tilde{f}_{ij}(a),
\quad\varphi'(\mathbf{g}_{ij}(a))=\tilde{g}_{ij}(a),$$
for $a\in R$ and $1\leqslant i\neq j\leqslant 3$, i.e., $\varphi\circ\varphi'=\psi\circ\psi'$. The homomorphism $\varphi'$ is unique as in the proof of Proposition~\ref{thm:pm_uce}.
\end{proof}

Now, we conclude from Propositions~\ref{prop:stp_ce} and~\ref{prop:p3_uce} that:

\begin{theorem}
\label{thm:p3_hml}
Let $(R,{}^-)$ be a unital associative superalgebra with superinvolution. Then
$$\mathrm{H}_2(\mathfrak{p}_3(R,{}^-))=\fourIdx{}{+}{}{1}{\mathrm{HD}}(R,{}^-\circ\rho)\oplus\frac{R}{3R+R_{(-)}\cdot R}\oplus\frac{R}{3R+R_{(-)}\cdot R}\oplus\frac{R}{3R+R_{(-)}\cdot R}.\qed$$
\end{theorem}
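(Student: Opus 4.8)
# Proof Proposal for Theorem~\ref{thm:p3_hml}

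The plan is to deduce the theorem from the general principle that the kernel of a universal central extension computes the second homology, together with the two extensions already at hand. First I would recall that $\mathfrak{p}_3(R,{}^-)$ is perfect by Proposition~\ref{prop:p_pft}(iii), so a universal central extension exists and is unique up to isomorphism. By Proposition~\ref{prop:p3_uce}, the composite map
$$\psi\circ\psi_3':\mathfrak{stp}_3(R,{}^-)\oplus\mathfrak{z}\longrightarrow\mathfrak{p}_3(R,{}^-)$$
is a universal central extension. Therefore $\mathrm{H}_2(\mathfrak{p}_3(R,{}^-))$ is canonically identified with its kernel, and the remaining task is purely to compute that kernel as a $\Bbbk$-module.

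The key step is to observe that the kernel of $\psi\circ\psi_3'$ fits into a short exact sequence built from the kernels of the two constituent extensions. Concretely, the kernel of $\psi_3':\mathfrak{stp}_3(R,{}^-)\oplus\mathfrak{z}\to\mathfrak{stp}_3(R,{}^-)$ is exactly the central summand $\mathfrak{z}$, while the kernel of $\psi:\mathfrak{stp}_3(R,{}^-)\to\mathfrak{p}_3(R,{}^-)$ is $\fourIdx{}{+}{}{1}{\mathrm{HD}}(R,{}^-\circ\rho)$ by Proposition~\ref{prop:stp_ct}. Since $\psi_3'$ restricts to an isomorphism on the $\mathfrak{stp}_3(R,{}^-)$-summand, the preimage under $\psi_3'$ of $\ker\psi$ is $\ker\psi\oplus\mathfrak{z}$ as a $\Bbbk$-submodule, and this is precisely $\ker(\psi\circ\psi_3')$. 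Hence as $\Bbbk$-modules
$$\ker(\psi\circ\psi_3')\;\cong\;\fourIdx{}{+}{}{1}{\mathrm{HD}}(R,{}^-\circ\rho)\,\oplus\,\mathfrak{z},$$
and unpacking the definition $\mathfrak{z}=\bigl(R/(3R+R_{(-)}\cdot R)\bigr)^{\oplus 3}$ yields the stated formula. One should remark that since we only claim an isomorphism of $\Bbbk$-modules (matching the phrasing of Theorems~\ref{thm:hml_pm} and~\ref{thm:p4_hml}), no compatibility with the bracket needs to be checked here.

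The main obstacle is not in this final assembly, which is essentially formal, but lies upstream in Proposition~\ref{prop:p3_uce} and Lemma~\ref{lem:stp3_2cyl}: verifying the $2$-cocycle identity for $\beta$ and, more delicately, showing that an arbitrary central extension $\varphi:\mathfrak{E}\to\mathfrak{p}_3(R,{}^-)$ factors uniquely through $\psi\circ\psi_3'$. The subtlety for $m=3$, flagged in the remark after Lemma~\ref{lem:tilde_rln}, is that the lifted elements $\tilde t_{ij}(a)$ need not satisfy relation (STP04c) (and similarly the $\tilde g_{ij}$ may fail (STP10c)), because there is no spare index $l\notin\{i,j,k\}$ to run the commutator-shuffling argument used in Proposition~\ref{thm:pm_uce}. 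This failure is exactly what the extra central directions $\tilde\pi_i$ absorb, and the computation in the proof of Proposition~\ref{prop:p3_uce} — in particular the identity $\tilde\pi_i(ab)=(-1)^{|a|}\tilde\pi_i(\bar a b)$ and the vanishing $\tilde\pi_i(3R)=0$ extracted from $[\tilde h_{ij},\tilde\pi_i(a)]=\tilde\pi_i(3a)$ — is the technical heart. Once those facts are in place, the present theorem is an immediate corollary, and I would present it as a two-line deduction citing Propositions~\ref{prop:stp_ce} and~\ref{prop:p3_uce}.
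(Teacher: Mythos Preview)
Your proposal is correct and follows the same approach as the paper, which simply states the theorem as an immediate consequence of Propositions~\ref{prop:stp_ce} and~\ref{prop:p3_uce} (together with Proposition~\ref{prop:stp_ct} for the identification of $\ker\psi$). You have made explicit the kernel computation $\ker(\psi\circ\psi_3')=\ker\psi\oplus\mathfrak{z}$ that the paper leaves to the reader, and your commentary correctly locates the real work in Proposition~\ref{prop:p3_uce}.
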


\begin{remark}
\label{rmk:p3_hml_com}
If $R$ is super-commutative, then $\mathfrak{p}_3(R,\rho)\cong\mathfrak{p}_3(\Bbbk)\otimes_{\Bbbk}R$. Theorem~\ref{thm:p3_hml} ensures that
$$\mathrm{H}_2(\mathfrak{p}_3(\Bbbk)\otimes_{\Bbbk}R)=(R/3R)\oplus(R/3R)\oplus(R/3R),$$
which vanishes whenever $3$ is invertible in $R$. If $\Bbbk$ is a field of characteristic zero, this coincides with the second homology of $\mathfrak{p}_3(\Bbbk)\otimes_{\Bbbk}R$ given in \cite{IoharaKoga2005}.
\end{remark}
\medskip

In the special case where $(R,{}^-)=(S\oplus S^{\mathrm{op}},\mathrm{ex})$, Theorem~\ref{thm:p3_hml} recovers the second homology of $\mathfrak{sl}_{3|3}(S)$ obtained in \cite{ChenSun2015}.

\begin{corollary}
\label{cor:uce_sl33}
Let $S$ be an arbitrary unital associative superalgebra. Then
$$\mathrm{H}_2(\mathfrak{sl}_{3|3}(S))\cong\mathrm{HC}_1(S).$$
\end{corollary}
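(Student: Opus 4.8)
The plan is to derive this from Theorem~\ref{thm:p3_hml} in exactly the way Corollary~\ref{cor:sl44} was derived from Theorem~\ref{thm:p4_hml}. First I would apply Example~\ref{ex:iso_p_SS} with $m=3$ to identify $\mathfrak{sl}_{3|3}(S)$ with $\mathfrak{p}_3(S\oplus S^{\mathrm{op}},\mathrm{ex})$, which gives
$$\mathrm{H}_2(\mathfrak{sl}_{3|3}(S))\cong\mathrm{H}_2(\mathfrak{p}_3(S\oplus S^{\mathrm{op}},\mathrm{ex})).$$
Writing $(R,{}^-)=(S\oplus S^{\mathrm{op}},\mathrm{ex})$, Theorem~\ref{thm:p3_hml} then yields
$$\mathrm{H}_2(\mathfrak{p}_3(R,{}^-))\cong\fourIdx{}{+}{}{1}{\mathrm{HD}}(R,{}^-\circ\rho)\oplus\left(\frac{R}{3R+R_{(-)}\cdot R}\right)^{\oplus 3}.$$

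The crucial point is that the three torsion-type summands vanish. As already observed in the proof of Corollary~\ref{cor:sl44}, the even element $1\oplus(-1)\in R=S\oplus S^{\mathrm{op}}$ lies in $R_{(-)}$ — indeed $1\oplus(-1)=\overline{0\oplus 1}-\rho(0\oplus 1)$ — and it is a unit of $R$ since $(1\oplus(-1))^2=1\oplus 1$; hence $R_{(-)}\cdot R=R$ and therefore $R/(3R+R_{(-)}\cdot R)=0$. What remains is $\mathrm{H}_2(\mathfrak{sl}_{3|3}(S))\cong\fourIdx{}{+}{}{1}{\mathrm{HD}}(S\oplus S^{\mathrm{op}},\mathrm{ex}\circ\rho)$, and this module was already identified with $\mathrm{HC}_1(S)$ in the proof of Corollary~\ref{cor:slmm}, via the $\Bbbk$-module isomorphism $\langle S\oplus S^{\mathrm{op}},S\oplus S^{\mathrm{op}}\rangle\rightarrow\langle S,S\rangle_{\mathsf{c}}$, $\langle a_1\oplus a_2,b_1\oplus b_2\rangle\mapsto\langle a_1,b_1\rangle_{\mathsf{c}}+\langle a_2,b_2\rangle_{\mathsf{c}}$, restricted to the degree-one homology term. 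Composing these three isomorphisms delivers the claim.

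I expect no genuine obstacle here: the corollary is a formal consequence of Theorem~\ref{thm:p3_hml}, Example~\ref{ex:iso_p_SS}, and Corollary~\ref{cor:slmm}, and is entirely parallel to Corollary~\ref{cor:sl44}. The only step that needs (elementary) checking is that $R_{(-)}\cdot R=R$ for the exchange involution, which is the unit computation recorded above; once it is in place, the three extra summands that appear for a general coordinate superalgebra collapse, so $\mathfrak{sl}_{3|3}(S)$ has the same second homology as $\mathfrak{sl}_{m|m}(S)$ for $m\geqslant 5$. I would therefore present the proof in just a few lines, referring the reader to the proof of Corollary~\ref{cor:sl44} for the identical reasoning.
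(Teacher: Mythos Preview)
Your proposal is correct and follows exactly the paper's own argument: identify $\mathfrak{sl}_{3|3}(S)$ with $\mathfrak{p}_3(S\oplus S^{\mathrm{op}},\mathrm{ex})$ via Example~\ref{ex:iso_p_SS}, apply Theorem~\ref{thm:p3_hml}, kill the three extra summands using that $1\oplus(-1)\in R_{(-)}$ is a unit, and then quote the identification $\fourIdx{}{+}{}{1}{\mathrm{HD}}(S\oplus S^{\mathrm{op}},\mathrm{ex}\circ\rho)\cong\mathrm{HC}_1(S)$ from Corollary~\ref{cor:slmm}. Your write-up is in fact slightly more detailed than the paper's (which compresses the same reasoning into three sentences), but there is no substantive difference.
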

\begin{proof}
It is known from Example~\ref{ex:iso_p_SS} that $\mathfrak{sl}_{3|3}(S)$ is isomorphic to $\mathfrak{p}_3(S\oplus S^{\mathrm{op}},\mathrm{ex})$. Hence, 
$$\mathrm{H}_2(\mathfrak{sl}_{3|3}(S))\cong\fourIdx{}{+}{}{1}{\mathrm{HD}}(S\oplus S^{\mathrm{op}},\mathrm{ex}\circ\rho)\oplus\mathfrak{z},$$
in which $\fourIdx{}{+}{}{1}{\mathrm{HD}}(S\oplus S^{\mathrm{op}},\mathrm{ex}\circ\rho)$ is isomorphic to $\mathrm{HC}_1(S)$ as in Corollary~\ref{cor:slmm}. On the other hand, since $(R,{}^-)=(S\oplus S^{\mathrm{op}},\mathrm{ex})$, we observe that $R_{(-)}$ contains a unit element $1\oplus(-1)$. Hence, $\mathfrak{z}=0$ and the desired isomorphism follows. 
\end{proof}

\section*{Acknowledgements}
The authors are grateful to the referees for their valuable comments. The authors thank Prof. Yun Gao and Prof. Hongjia Chen for useful suggestions. Zhihua Chang was supported by the National Natural Science Foundation of China (Grant No. 11501213), the China Postdoctoral Science Foundation (Grant No. 2015M570705). Yongjie Wang was supported by the China Postdoctoral Science Foundation (Grant No. 2015M571928).


\begin{thebibliography}{15}
\bibitem{AllisonFaulkner1993} B. N. Allison and J. R. Faulkner, Nonassociative coefficient algebras for Steinberg unitary Lie algebras, \textit{J. Algebra} {\bf 161} (1993), 1-19.
\bibitem{ChangWang2016} Z. Chang and Y. Wang, Central extensions of generalized ortho-symplectic Lie superalgebras, \textit{Sci. China Math.} {\bf 60} (2016), 223-260.
\bibitem{ChenGuay2013} H. Chen and N. Guay, Central extensions of matrix Lie superalgebras over $\mathbb{Z}/2\mathbb{Z}$--graded algebras, \textit{Algebr. Represent. Theory} {\bf16} (2013), 591-604.
\bibitem{ChenSun2015} H. Chen and J. Sun, Universal central extensions of $\mathfrak{sl}_{m|n}$ over $\mathbb{Z}/2\mathbb{Z}$--graded algebras, \textit{J. Pure Appl. Algebra} {\bf 219} (2015), 4278-4294.
\bibitem{ChengGao2015} J. Cheng and Y. Gao, Generalized $P(N)$-graded Lie superalgebras, \textit{preprint}, (2015).
\bibitem{Gao1996} Y. Gao, Steinberg unitary Lie algebras and skew-dihedral homology, \textit{J. Algebra} {\bf179} (1996), 261-304.
\bibitem{IoharaKoga2001} K. Iohara and Y. Koga, Central extensions of Lie superalgebras, \textit{Comment. Math. Helv.} {\bf76} (2001), 110-154.
\bibitem{IoharaKoga2005} K. Iohara and Y. Koga, Second homology of Lie superalgebras, \textit{Math. Nachr.} {\bf278} (2005), 1041-1053.
\bibitem{Kac1977} V. G. Kac, Lie superalgebras, \textit{Adv. Math.} {\bf 26} (1977), 8-96.
\bibitem{KasselLoday1982} C. Kassel and J. L. Loday, Extensions centrales d'alg\`{e}bres de Lie, \textit{Ann. Inst. Fourier (Grenoble)} {\bf32} (1982), 119-142.
\bibitem{Loday1998} J. L. Loday, \textit{Cyclic homology}, Springer-Verlag, Berlin, 1998.
J. L. Loday, \textit{Cyclic homology, 2nd ed.}, Grundlehren der Mathematischen Wissenschaften vol. 301, Springer-Verlag, Berlin, (1998). Appendix E by María O. Ronco; Chapter 13 by the author in collaboration with Teimuraz Pirashvili. 
\bibitem{LodayProcesi1988} J. L. Loday and C. Procesi, Homology of symplectic and orthogonal algebras, \textit{Adv. Math.} {\bf 69} (1988), 93-108.
\bibitem{LodayQuillen1984} J. L. Loday and D. Quillen, Cyclic homology and the Lie algebra homology of matrices, \textit{Comment. Math. Helv.} {\bf 59} (1984), 565-591.
\bibitem{MartinezZelmanov2003} C. Mart\'{i}nez and E. I. Zelmanov, Lie superalgebras graded by $P(n)$ and $Q(n)$, \textit{Proc. Natl. Acad. Sci. USA.} {\bf 100} (2003), 8130-8137.
\bibitem{NW2008} K. H. Neeb and F. Wagemann, The second cohomology of current algebras of general Lie algebras. \textit{Canad. J. Math.} {\bf 60} (2008), 892-922.
\bibitem{Neher2003} E. Neher, An introduction to universal central extensions of Lie superalgebras,  \textit{in Groups, rings, Lie and Hopf algebras,}  141-166, Math. Appl., 555, Kluwer Acad. Publ., Dordrecht, (2003).
\bibitem{Racine1998} M. L. Racine, Primitive superalgebras with superinvolution, \textit{J. Algebra} {\bf 206} (1998), 588-614. 
\bibitem{ScheunertZhang1998} M. Scheunert and R. B. Zhang, Cohomology of Lie superalgebras and their generalizations, \textit{J. Math. Phys.} {\bf 39} (1998), 5024-5061.
\end{thebibliography}
\end{document}